\documentclass[a4paper,12pt]{amsart}

\usepackage{fouriernc}
\usepackage[T1]{fontenc}

\usepackage{graphicx}
\usepackage{amsmath}
\usepackage{amssymb}
\usepackage[active]{srcltx}

\usepackage{hyperref}
\usepackage{bbm}
\usepackage{enumerate}
\usepackage{tikz}

\usepackage{geometry}
 \geometry{
 a4paper,
 total={160mm,247mm},
 left=25mm,
 top=25mm,
 }

\usepackage{color}
\definecolor{red}{rgb}{1,0,0}
\def\red{\color{red}}
\definecolor{green}{rgb}{0,0.8,0}
\def\gree{\color{green}}
\definecolor{blue}{rgb}{0,0,1}
\def\blu{\color{blue}}
\definecolor{black}{rgb}{0,0,0}

\definecolor{grey}{rgb}{0.333,0.333,0.333}

\definecolor{lightgrey}{rgb}{0.666,0.666,0.666}

\definecolor{white}{rgb}{1,1,1}

\newtheorem{thm}{Theorem}%
\newtheorem{prop}{Proposition}[section]%
\newtheorem{cor}{Corollary}[section]%
\newtheorem*{thm-non}{Theorem}
\theoremstyle{definition}

\theoremstyle{remark}
\newtheorem{remark}{Remark}[section] %

\theoremstyle{plain}

\numberwithin{equation}{section}

\def\HH{{\mathbb H}}

\def\NN{{\mathbb N}}

\def\RR{{\mathbb R}}

\def\ZZ{{\mathbb Z}}

\def\one{{\mathbbm{1}}}

\def\vecsigma{{\text{\boldmath$\sigma$}}}

\def\scrA{{\mathcal A}}
\def\scrB{{\mathcal B}}
\def\scrC{{\mathcal C}}
\def\scrD{{\mathcal D}}
\def\scrE{{\mathcal E}}
\def\scrF{{\mathcal F}}

\def\scrH{{\mathcal H}}

\def\scrP{{\mathcal P}}

\def\scrX{{\mathcal X}}
\def\scrY{{\mathcal Y}}

\def\e{\mathrm{e}}
\def\i{\mathrm{i}}

\def\L{\operatorname{L{}}}

\def\SL{\operatorname{SL}}

\def\SO{\operatorname{SO}}

\def\T{\operatorname{T{}}}

\def\vol{\operatorname{vol}}

\title[Extreme events for horocycle flows]{Extreme events for horocycle flows}

\author{Jens Marklof}
\address{Jens Marklof, School of Mathematics, University of Bristol, Bristol BS8 1UG, U.K.\newline \rule[0ex]{0ex}{0ex} \hspace{8pt}{\tt j.marklof@bristol.ac.uk}}
\author{Mark Pollicott}
\address{Mark Pollicott, Department of Mathematics, Zeeman Building, University of Warwick, Coventry CV4 7AL, UK 
\newline \rule[0ex]{0ex}{0ex} \hspace{8pt}{\tt masdbl@warwick.ac.uk}}
\date{29 June 2024/3 March 2025}
\thanks{JM's research supported by EPSRC grant EP/W007010/1.   MP's research supported by ERC grant  833802-resonances and EPSRC grant EP/T001674/1.
Data supporting this study are included within the article. MSC (2020): 37D40,  37D50, 11B57}

\begin{document}

\begin{abstract}
We prove extreme value laws for cusp excursions of the horocycle flow in the case of surfaces of constant negative curvature. The key idea of our approach is to study the hitting time distribution for shrinking Poincar\'e sections that have a particularly simple scaling property under the action of the geodesic flow. This extends the extreme value law of Kirsebom and Mallahi-Karai [arXiv:2209.07283] for cusp excursions for the modular surface. Here we show that the limit law can be expressed in terms of Hall's formula for the gap distribution of the Farey sequence.
\end{abstract}

\maketitle

\tableofcontents

\section{Introduction}\label{secIntro}
Important examples in the history of ergodic theory are the geodesic and horocycle flows on finite area hyperbolic surfaces, which were the basis for the pioneering works of
Hedlund and Hopf in the 1930s on minimality and ergodicity. 
Such  geodesic flows were shown to be strong mixing by Hedlund, using a geometric argument \cite{Hedlund},  and 
in 1952   by Gelfand and Fomin, using unitary representation theory \cite{FominGelfand}. 
The following year Parasjuk used  this approach  to show that  horocycle flows were also strong mixing \cite{Parasjuk}.
Later Ratner \cite{Ratner} and Moore \cite{Moore} refined these results to study the speeds of mixing of both  flows.
Other important progress came with the  classification  of horocycle invariant measures and equidistribution for horocycle flows, which was established by Furstenberg \cite{Furstenberg} (for compact surfaces) and Dani and Smillie \cite{DaniSmillie} (for non-compact surfaces).

Horocycle flows are of particular interest since they enjoy statistical features that are reminiscent of those seen in ``chaotic'' dynamics, despite the lack of sensitive dependence on initial conditions. These include the above strong mixing property, non-standard limit theorems for ergodic averages \cite{Bufetov14} and temporal limit theorems \cite{Dolgopyat18}. Particularly remarkable is the logarithm law for cusp excursions for typical horocycles due to Athreya and Margulis \cite{AthreyaMargulis} (see also \cite{AthreyaMargulis2,Kelmer12,Kelmer19,Yu17} for unipotent flows in more general settings), by analogy with results for geodesic flows by Sullivan \cite{Sullivan}. 

Kirsebom and Mallahi-Karai \cite{Kirsebom} refined the logarithm law by proving an extreme value theorem for cusp excursions on the modular surface, again by analogy with results for geodesic flows \cite{Pollicott}. One objective of the present paper is to extend their extreme value theorem to general hyperbolic surfaces and more general initial data. It should also be possible to extend the approach developed here to cusp excursions in infinite volume surfaces, using the techniques of \cite{Lutsko22}.

To state our first main result, let  $\scrX=\T^1\scrY$ be the unit tangent bundle of a non-compact hyperbolic surface $\scrY$ with finite area. Let $\varphi_t$, $h_t^+$ and $h_t^-$ be the geodesic flow, unstable horocycle flow, and stable horocycle flow on $\scrX$ at time $t$, respectively, so that
\begin{equation}\label{commute}
\varphi_t \circ h_s^+ = h_{s\exp t}^+ \circ \varphi_t ,\qquad \varphi_t \circ h_s^- = 
h_{s\exp (-t)}^- \circ \varphi_t.
\end{equation}
Denote by $\mu$ the invariant Liouville measure on $\scrX$, which we assume is normalised so that $\mu(\scrX)=1$. We denote by $d_\scrY$ the Riemannian distance on $\scrY$, which is normalised so that the geodesic flow moves points with speed one. That is, if $\pi:\scrX\to\scrY$ is the natural projection, then $d_\scrY(\pi x, \pi\circ\varphi_t x) =t$ for every $x\in\scrX$ and every sufficiently small $t>0$.  

\begin{figure}
\begin{center}
\includegraphics[width=0.7\textwidth]{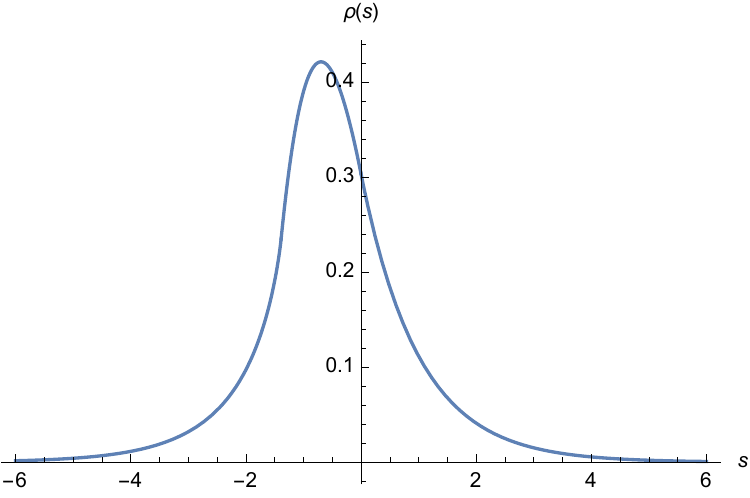}
\end{center}
\caption{The limit density for extreme cusp excursions for the modular surface, which is derived in Section \ref{secExtreme}. This is the same distribution, up to scaling and reflection, as for the logs of smallest denominators \cite{smalld2}.} \label{fig1}
\end{figure}

We then have the following generalisation of \cite{Kirsebom}.

\begin{thm}\label{mainthm}
Fix $y\in\scrY$. Let $\lambda$ be a Borel probability measure that is absolutely continuous with respect to $\mu$. Then there exists a probability density $\omega_y\in\L^1(\RR)$ and constants $C_2>C_1>0$ such that, for every $H\in\RR$, 
\begin{equation}\label{maineq}
\lim_{T \to \infty} \lambda\{ x_0\in\scrX :  \sup_{0<s\leq T}  d_\scrY(y, \pi\circ h^+_s(x_0) ) >  H + \log T  \} 
=  \int_H^\infty \omega_y(s) ds ,
\end{equation}
and
\begin{equation}\label{C1C2}
C_1\, \e^{-|s|} \leq \omega_y(s) \leq C_2\, \e^{-|s|}
\end{equation}
for all $s\in\RR$.
\end{thm}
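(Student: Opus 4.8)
The plan is to convert \eqref{maineq} into an equidistribution statement for the geodesic flow, using \eqref{commute} to rescale a horocycle segment of length $T$ into one of length $1$, and then to extract \eqref{C1C2} from the geometry of the cuspidal horoballs. Identify $\scrX$ with $\Gamma\backslash\PSL(2,\RR)$, let $1,\dots,r$ index the cusps of $\scrY$, and for each cusp $j$ fix $\sigma_j\in\Iso(\HH)$ taking it to $\infty$ with width one; write $Y_j(\Gamma g)=\sup_{\gamma\in\Gamma}\Im(\sigma_j^{-1}\gamma g\cdot\i)$ for the invariant height in cusp $j$. Since the thin part of $\scrY$ is the union of the horoball neighbourhoods of the cusps, there are constants $\kappa_j=\kappa_j(y)$ with $d_\scrY(y,\pi x)=\log Y_j(x)+\kappa_j+o(1)$ as $Y_j(x)\to\infty$, while $d_\scrY(y,\pi x)-\max_j\log^+Y_j(x)$ stays bounded on $\scrX$; hence for large $T$ the event in \eqref{maineq} is, up to an $o(1)$ adjustment of the threshold, the event $\{x_0:\max_j(\sup_{0<s\le T}\log Y_j(h^+_s(x_0))+\kappa_j)>H+\log T\}$, and outside it no deep excursion occurs.

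\textbf{Rescaling.} With $x_0=\Gamma g_0$, let $\Delta_j=\Delta_j(x_0)\subset\RR^2$ be the discrete set of bottom rows of $\sigma_j^{-1}\gamma g_0$, $\gamma\in\Gamma$. A direct computation gives $Y_j(h^+_s(x_0))^{-1}=\min_{(c,d)\in\Delta_j}(c^2+(cs+d)^2)$, and for fixed $(c,d)$ the infimum of $(cs+d)^2$ over $s\in(0,T]$ vanishes exactly when $-d/c\in[0,T]$; hence, up to lower-order boundary terms,
\[
 \sup_{0<s\le T}Y_j(h^+_s(x_0))=\Bigl(\min\{\,c^2:(c,d)\in\Delta_j(x_0),\ -d/c\in[0,T]\,\}\Bigr)^{-1}\cdot\bigl(1+o(1)\bigr).
\]
The geodesic dilation $(c,d)\mapsto(c\sqrt T,d/\sqrt T)$ — that is, \eqref{commute} in the guise $h^+_s(x_0)=\varphi_{\log T}\circ h^+_{s/T}\circ\varphi_{-\log T}(x_0)$ — carries $\Delta_j(x_0)$ to $\Delta_j(\varphi_{-\log T}x_0)$ and the interval $[0,T]$ to $[0,1]$, so, setting $\Psi_j(w):=\min\{c^2:(c,d)\in\Delta_j(w),\ -d/c\in[0,1]\}$,
\[
 \sup_{0<s\le T}d_\scrY\bigl(y,\pi h^+_s(x_0)\bigr)=\log T+\Phi_y\bigl(\varphi_{-\log T}x_0\bigr)+o(1),\qquad \Phi_y(w):=\max_j\bigl(\kappa_j(y)-\log\Psi_j(w)\bigr).
\]
For the modular surface $\Psi_j(w)$ is, through the primitive vectors of the lattice attached to $w$, the length of a certain interval times the square of the smallest denominator of a rational it contains.

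\textbf{Conclusion and tails.} The geodesic flow is mixing for $\mu$ (Gelfand--Fomin \cite{FominGelfand}), so $(\varphi_{-\log T})_*\lambda\to\mu$ weakly for every $\lambda\ll\mu$ (approximate $d\lambda/d\mu$ in $\L^1$ and apply mixing on $\L^2$). Since $\Phi_y$ is finite $\mu$-a.e.\ and its level sets are $\mu$-null (off a null set they are smooth hypersurfaces, by the explicit form of the $\Psi_j$), the displays above give
\[
 \lim_{T\to\infty}\lambda\{x_0:\sup_{0<s\le T}d_\scrY(y,\pi h^+_s(x_0))>H+\log T\}=\mu\{\Phi_y>H\},
\]
and we \emph{define} $\omega_y$ by $\int_H^\infty\omega_y(s)\,ds=\mu\{\Phi_y>H\}$; absolute continuity of $H\mapsto\mu\{\Phi_y>H\}$, hence $\omega_y\in\L^1(\RR)$, again follows from the structure of the $\Psi_j$. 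For \eqref{C1C2}: the Liouville volume of the depth-$v$ horoball at each cusp is $\asymp v^{-1}$, so a Siegel-type first-moment bound gives $\mu\{\Psi_j<\epsilon\}\asymp\epsilon$ as $\epsilon\to0$, hence $\mu\{\Phi_y>H\}\asymp\e^{-H}$ as $H\to+\infty$; dually a large-empty-region estimate gives $\mu\{\Psi_j\ge M\}\asymp M^{-1}$, hence $\mu\{\Phi_y\le H\}=1-\mu\{\Phi_y>H\}\asymp\e^{H}$ as $H\to-\infty$. The same first-moment estimates applied to the thin shells $\{\e^{-1}\epsilon\le c^2<\epsilon\}$ and their large-scale analogues bound $\int_s^{s+1}\omega_y$ between two positive multiples of $\e^{-|s|}$, which together with the local regularity of $\omega_y$ yields \eqref{C1C2}.

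\textbf{Main obstacle.} The delicate point is making the rescaling rigorous in a form that survives integration against the merely absolutely continuous measures $(\varphi_{-\log T})_*\lambda$: the boundary contributions from the endpoints $s\in\{0^+,T\}$ and from the near-resonant vectors of $\Delta_j$ must be controlled uniformly, the passage from $d_\scrY(y,\pi\,\cdot\,)$ to $\max_j\log^+Y_j+\kappa_j$ must be quantified, and one needs $\mu(\partial\{\Phi_y>H\})=0$ in order to promote weak convergence to convergence on $\{\Phi_y>H\}$; the lower bound on the left tail of $\omega_y$ in particular requires exhibiting the relevant cuspidal configurations. Finally, for the modular surface the $\Psi_j$ reduce to the smallest-denominator functionals above, whereupon $\omega_y$ becomes a reflected and rescaled copy of the distribution of the smallest denominator of a rational in a random interval, i.e.\ Hall's gap distribution for the Farey sequence \cite{smalld2}; this recovers \cite{Kirsebom} and Figure \ref{fig1}, but is not needed for the statement as such.
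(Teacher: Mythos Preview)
Your strategy is essentially the paper's: rescale the horocycle segment of length $T$ by the geodesic flow via \eqref{commute}, invoke mixing to push $(\varphi_{-\log T})_*\lambda$ to $\mu$, and describe the resulting functional through the bottom-row lattice points $\Delta_j$. The paper packages this through hitting times to the shrinking Poincar\'e sections $\scrH_\kappa(R)$ rather than by defining $\Phi_y$ directly, but the content is the same; in particular your $\Psi_j$ is the paper's triangle-counting condition (Corollary~\ref{corsat}), and your choice of $\kappa_j(y)$ corresponds to the paper's choice of $\vecsigma$ making $y$ equidistant from all cuspidal neighbourhoods.

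The one place where the paper's formalism genuinely buys something is the pointwise bound \eqref{C1C2}. Your argument gives $\mu\{\Phi_y>H\}\asymp\e^{-H}$ and $1-\mu\{\Phi_y>H\}\asymp\e^{H}$ on the two tails, and thin-shell versions bound $\int_s^{s+1}\omega_y$; but promoting this to a \emph{pointwise} bound on $\omega_y(s)$ via unspecified ``local regularity'' is a gap. The paper avoids this because its limit density is $\rho_\vecsigma(s)=\e^{-s}\Psi_\vecsigma(\e^{-s})$ with $\Psi_\vecsigma(r)=\overline\eta_1(\vecsigma)^{-1}\,\nu_\vecsigma\{\eta_{-1}>r\}$, an explicit monotone survival function; the pointwise lower bound for large $r$ (your left tail) then comes from the deterministic lower bound $\eta_{-1}(s,t,\kappa,\vecsigma)\ge(\e^{t-\sigma_\kappa}-1)^{1/2}$ for the s\'ejour time in the cusp, and the upper bound from the analogous inequality with the cusps covering $\scrY$. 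In your language this is exactly the ``exhibiting the relevant cuspidal configurations'' you flag as an obstacle: the return-time/s\'ejour-time formula is what supplies it, and once you have it the density exists and the two-sided bound is immediate without any differentiation or regularity argument.
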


 In particular, the extreme events in the title of this article refer to the maximum distance of the excursion of  the horocycle orbits  into the cusps.  Theorem \ref{mainthm} describes the measure of the set of points whose  orbits  up to  time $T$ have that  these values exceed $\log T$  by at least any  given fixed value $H$ as $T$ tends to infinity.

Part of the inspiration for these results comes from the case of 
geodesic flows, cf. \cite{Pollicott} and Dolgopyat's thesis \cite{Dolgopyat97}.  
There are related partial results  for more general Fuchsian groups due  to Jaerisch, Kessebomer and Stratmann \cite{Jaerisch}.

\begin{remark}
A formula for the limit density $\omega_y(s)$ is given in Section \ref{secExtreme}. Figure \ref{fig1} shows the limit density for the modular surface; see Section \ref{secExtreme} for details.
\end{remark}

\begin{remark}
The assumption that the probability measure $\lambda$ is absolutely continuous with respect to $\mu$ 
in Theorem \ref{mainthm} 
can be considerably relaxed. Key in our proof is relation \eqref{mixeq} below, which holds for a larger class of measures. These include for example probability measures $\lambda$ that are absolutely continuous with respect to the uniform measure on a fixed stable horocycle. \end{remark}

The main idea of our approach is to first establish a limit law for the hitting times of horocycles relative to a Poincar\'e section with a particularly simple scaling property (Section \ref{secSetting}), and then extend the results to the cuspidal neighbourhoods studied in \cite{Kirsebom} (Section \ref{secExtreme}). The specific Poincar\'e section we are using was constructed in \cite{Athreya12} for the modular surface (see also \cite{Marklof10} for a construction in arbitrary dimension), and in \cite{Athreya16,Athreya15,Berman23,Heersink,Kumanduri24,Sanchez,Taha19,Work20,Uyanik16} 
for general hyperbolic surfaces of finite volume, as well as more general moduli spaces of flat surfaces. The return time statistics for the horocycle flow have been studied in detail in the above papers. We provide a self-contained discussion in Sections \ref{secHitting}--\ref{secMultiple}. 
As we will see, the hitting time statistics of horocycle flows do not fall into the class of standard limit laws and are in fact close to those of integrable systems \cite{Dettmann17,Marklof10}.

In the case of the modular surface, the return times to the section are related to statistical properties of Farey sequences \cite{Athreya12,Marklof13}, which will allow us to express the extreme value law in terms of the classical Hall distribution \cite{Hall70} (see also \cite{Boca05,Kargaev97}), thus completing the partial formula found in \cite{Kirsebom}. In the general setting, the return time statistics for the above Poincar\'e sections yields the gap distribution of slopes of saddle connections on flat surfaces, which was the principal motivation for \cite{Athreya16,Athreya15,Berman23,Heersink,Kumanduri24,Sanchez,Taha19,Work20,Uyanik16}. 

The plan for the remainder of this paper is as follows. In Section \ref{secSetting} we define a shrinking family of Poincar\'e sections for the horocycle flow, and show that the hitting time process, for random initial data, converges to a limit process. The key input in the proof is the scaling property of the section and the mixing property of the geodesic flow. The limit law is then extended to cuspidal neighbourhoods by establishing their proximity to the section. This argument will require us to also track the location of the hit, and we will therefore prove limit theorems for the joint process of hitting times and impact parameters. In Section \ref{secHitting} we express the hitting time/impact parameter process in terms of a natural lattice point counting problem in $\SL(2,\RR)$, from which we can deduce additional properties of the limit process. If we are only interested in the hitting time and the ``height'' of the hit, we can further reduce the problem to understanding the distribution of the orbit of the unit vector $(0,1)$ under $\Gamma$ (acting by right multiplication). This is carried out in Section \ref{secHyperbolic} and yields an explicit formula for the average return time 
$\overline\eta_1$ to the Poincar\'e section in terms of the volume of the hyperbolic surface. Section \ref{secExample} considers the special example of the modular group $\Gamma=\SL(2,\ZZ)$, where we show that the limit laws can be expressed via the Hall distribution, and provide explicit formulas for the first hitting time. Section \ref{secMultiple} extends the setting to the joint distribution of hitting times in shrinking sections in all $K$ cusps of the surface. Section \ref{secExtreme} shows that Theorem \ref{mainthm} is now a consequence of the results obtained for hitting times, and provides a formula for the limit distribution.

\section{Hitting time statistics}\label{secSetting}

Throughout this paper we assume the surface $\scrY$ has $K$ cusps, and $\scrC_\kappa$ is a closed stable horocycle of length one around the $\kappa$th cusp, for  $\kappa=1,\ldots,K$. This means the unit tangent vectors on the horocycle point towards the cusp; in other words, for $x\in\scrC_\kappa$ we have $h_s^-(x)=h_{s+1}^-(x)$ and $\varphi_t(x)$ escapes into the cusp as $t\to\infty$. 

For $R\in\RR$, define 
\begin{equation}
\scrH_\kappa(R)
=  \{ \varphi_t(\scrC_\kappa) :  t\geq R \} ,
\end{equation}
which has the obvious scaling property
 $\varphi_t \scrH_\kappa( R) 
 = \scrH_\kappa( R+t )$. By construction, the set $\scrH_\kappa(R)$ is an embedded closed submanifold of $\scrX$ of co-dimension one, which (by the Anosov property of $\varphi_t$)  is transversal to the unstable horocycle flow and therefore defines a Poincar\'e section. We furthermore note that $\scrH_\kappa\cap\scrH_{\ell}=\emptyset$ for $\kappa\neq\ell$. We will in the following focus on the hitting times for the cusp $\kappa$.

Note that due to the ergodicity of the horocycle flow $\mu$-almost every point will eventually hit the section $\scrH_\kappa(R)$. A natural parametrisation of $\scrH_\kappa( R)$ is given by the embedding 
\begin{equation}
\iota_R : \Sigma = [0,1) \times \mathbb R_{\geq 0}\to 
\scrH_\kappa( R), \qquad (s,t) \mapsto \varphi_{t+R} \circ h_s^- (x_\kappa) 
\end{equation}
for some fixed but arbitrary choice of $x_\kappa\in\scrC_\kappa$, and $\Sigma=[0,1)\times\RR_{\geq 0}$, where $[0,1)$ parametrises $\RR/\ZZ$ with $0$ and $1$ identified.

The map $(r,s,t) \mapsto h_r^+\circ \varphi_t \circ h_s^- (x_\kappa)$ provides a parametrisation of $\scrX$ (up to a set of $\mu$-measure zero), where $(s,t)\in\Sigma$ and $0\leq r <\eta_1(s,t)$, and $\eta_1(s,t)$ is the first return time to the section $\scrH_\kappa( 0)=\iota_0(\Sigma)$. In these local coordinates, the Liouville measure reads 
\begin{equation}
d\mu(r,s,t) = \frac{1}{\overline\eta_1} \; dr \, d\nu(s,t)
\end{equation}
with
\begin{equation}\label{Kac}
d\nu(s,t) =\e^{-t} dt\, ds , \qquad 
\overline\eta_1  = \int_{\Sigma} \eta_1(s,t)\, d\nu(s,t) .
\end{equation}
This choice of normalisation ensures, in view of Kac's formula for the average return time, that $\mu(\scrX)=1$. For more background on the connection of hitting and return times, see \cite{entry}.
 
         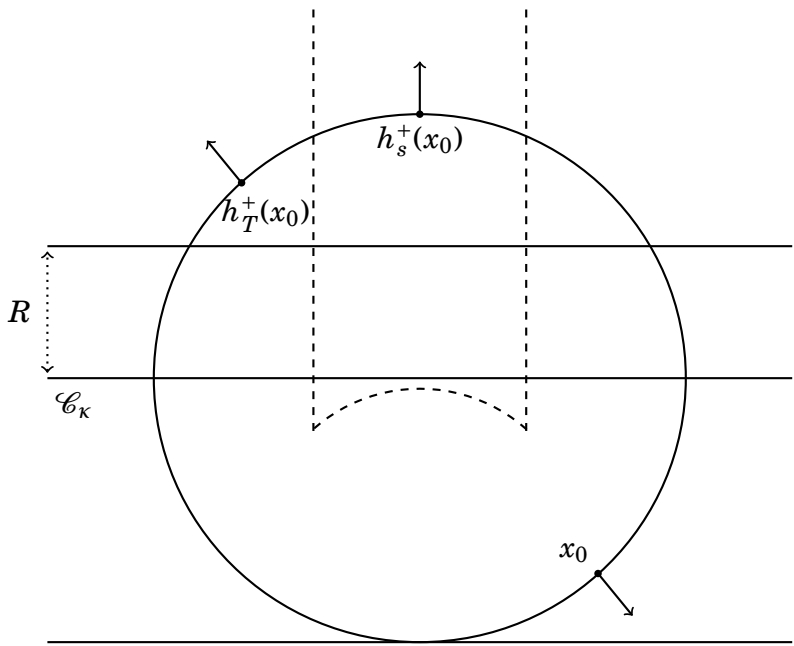
\begin{figure}
          \centerline{
    \begin{tikzpicture}[thick,scale=0.70, every node/.style={scale=1}]
\draw [black,dashed,domain=48:132] plot ({3*cos(\x)}, {3*sin(\x)-3.2});
\draw[black] (0,0) circle (5cm);
\draw[black] (-7,-5) -- (7,-5);
            \draw[black,<->, dotted] (-7,0.1) -- (-7,2.4);
               \node at (-7.5, 1.25) {$R$};
                                              \draw[fill,black] (3.35,-3.7) circle (0.05cm);
                                                            \node at (2.9,-3.3){$x_0$};
                                                             \draw[->,black] (3.35,-3.7) --(4,-4.5);
       \draw[fill,black] (0, 5) circle (0.05cm);
        \draw[->,black] (0,5) --(0,6);
        \node at (0,4.5){$h^+_s(x_0)$};
          \draw[fill,black] (-3.35,3.7) circle (0.05cm);
           \draw[->,black] (-3.35,3.7) --(-4,4.5);
             \node at (-2.9,3.1){$h^+_T(x_0)$};
           \draw[black, dashed] (-2,-1) -- (-2,7);      
                  \draw[black, dashed] (2,-1) -- (2,7);    
                  \draw[black] (-7, 2.5) -- (7,2.5);
                  \draw[black] (-7, 0) -- (7,0);
                  \node at (-6.5,-0.5){$\scrC_\kappa$};
\end{tikzpicture}
}
\caption{The unstable horocycle flow $h^+$ on $\scrX$ can be represented as a flow along horocycles in the complex upper half plane $\HH$, moving in counterclockwise direction with unit tangent vectors pointing outwards. The orbit of $x_0$ has a locally maximal excursion into the cusp at time $0 \leq s \leq T$, and hyperbolic distance from $\scrC_\kappa$ greater than $R\geq 0$.  The dashed region represents a fundamental domain in $\HH$ of the fundamental group $\Gamma$ of the surface $\scrY$.}
  \end{figure}

For $j \in \mathbb N$, denote by $\xi_j=\xi_j(x_0,R)$ the $j$th hitting time 
to $\scrH_\kappa(R)$ for the orbit of $x_0\in\scrX$ under the unstable horocycle flow $h^+$. We denote by $w_j=w_j(x_0,R)\in\Sigma$ the impact parameter of the $j$th hit in local coordinates, so that $\iota_R (w_j(x_0,R))$ is the location of the $j$th hit on $\scrH_\kappa(R)$. 

We observe the simple scaling properties \begin{equation}\label{scaling}
\xi_j(x,R) = \e^R \xi_j(\varphi_{-R} x,0), \qquad w_j(x,R) = w_j(\varphi_{-R} x,0).
\end{equation}
These will be key in our first limit law, which shows that for random initial data $x_0\in\scrX$, the sequence $(\xi_j,w_j)_{j\in\NN}$ converges to a limiting process, which is given by the hitting time process of the horocycle flow with respect to the fixed section $\scrH_\kappa(0)$.

The initial data $x_0\in\scrX$ will be chosen according to a fixed Borel probability measure $\lambda$. We say the Borel probability measure $\lambda$ is {\em admissible} if
\begin{equation}\label{mixeq}
\lim_{t\to\infty} \lambda(\varphi_t\scrE) = \mu(\scrE)
\end{equation}
for any Borel set $\scrE\subset\scrX$ with boundary of $\mu$-measure zero.
Examples of admissible measures are:
\begin{itemize}
\item Any $\lambda$ that are absolutely continuous with respect to $\mu$. This follows from the strong mixing property of the geodesic flow $\varphi$.
\item Uniform probability measures $\lambda$ supported on a  finite {\em stable} horocycle segment $\{ h_s^-(x_0) : \alpha<s<\beta\}$ with $x_0\in\scrX$ and $\alpha<\beta$ fixed, or more generally, probability measures that are absolutely continuous with respect to the uniform measure on a fixed stable horocycle. This follows from the equidistribution of translates of stable horocycles under $\varphi_t$ as $t\to-\infty$ \cite{Strombergsson}.
\end{itemize}

We denote by $\eta_{-1}(w)$ the return time with end point $\iota_0(w)\in \scrH_\kappa( 0)$, i.e., the first return time with initial condition $\iota_0(w)$ going backwards in time. 

\begin{thm}\label{cusp-first}
Assume $\lambda$ is admissible. Then, for $X>0$ and $\scrD\subset\Sigma$ a Borel set with boundary of Lebesgue measure zero, 
\begin{equation}
\lim_{R \to \infty} \lambda\{ x_0\in\scrX :  \e^{-R}\xi_1( x_0,R) >X, \; w_1(x_0,R) \in\scrD  \} \\
=  \int_X^\infty \int_\scrD \Psi(r,w) d\nu(w) dr 
\end{equation}
with $\L^1$ probability density
\begin{equation}\label{Psirw}
\Psi(r,w) = \frac{1}{\overline\eta_1} \; \one\left(0 \leq r<\eta_{-1}(w)\right) .
\end{equation}
\end{thm}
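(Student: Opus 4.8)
The plan is to use the scaling relation \eqref{scaling} to convert the assertion into a fixed statement about the section $\scrH_\kappa(0)$, invoke admissibility to replace $\lambda$ by $\mu$ in the limit, and then evaluate the resulting Liouville volume in flow-box coordinates adapted to $\scrH_\kappa(0)$.

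First, by \eqref{scaling},
\[
\bigl\{x_0\in\scrX:\e^{-R}\xi_1(x_0,R)>X,\ w_1(x_0,R)\in\scrD\bigr\}=\varphi_R\scrE,\qquad
\scrE:=\bigl\{y\in\scrX:\xi_1(y,0)>X,\ w_1(y,0)\in\scrD\bigr\},
\]
so the quantity whose limit we want is exactly $\lambda(\varphi_R\scrE)$. Since $\lambda$ is admissible, \eqref{mixeq} yields $\lambda(\varphi_R\scrE)\to\mu(\scrE)$ as $R\to\infty$, provided $\mu(\partial\scrE)=0$. Thus the theorem reduces to computing $\mu(\scrE)$ and checking that $\scrE$ is a Liouville continuity set.

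To compute $\mu(\scrE)$, I would introduce the ``backward'' flow-box coordinates over $\scrH_\kappa(0)$, namely $\Phi(w,r)=h^+_{-r}(\iota_0(w))$ for $w\in\Sigma$ and $0\le r<\eta_{-1}(w)$. Just as for the forward parametrisation recorded above, $\Phi$ covers $\scrX$ up to a $\mu$-null set: by ergodicity of $h^+$, almost every $y$ lies off the section, its first forward hit on $\scrH_\kappa(0)$ is some $\iota_0(w)$, and it is reached at a time $r<\eta_{-1}(w)$ precisely because the preceding hit of $\iota_0(w)$, at time $-\eta_{-1}(w)$, has not yet been reached; in particular $\xi_1(\Phi(w,r),0)=r$ and $w_1(\Phi(w,r),0)=w$. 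Since the Liouville measure is $h^+$-invariant (and the transverse measure $\nu$ induced on $\scrH_\kappa(0)$ is insensitive to the flow direction), the Kac-type local description of $\mu$ recorded above holds verbatim in the coordinates $\Phi$, i.e.\ $d\mu=\frac{1}{\overline\eta_1}\,dr\,d\nu(w)$; comparing with $\mu(\scrX)=1$ forces $\int_\Sigma\eta_{-1}\,d\nu=\overline\eta_1$, so that $\Psi$ in \eqref{Psirw} is an $\L^1$ probability density, and
\[
\mu(\scrE)=\frac{1}{\overline\eta_1}\int_\scrD\max\{\eta_{-1}(w)-X,\,0\}\,d\nu(w)=\int_X^\infty\!\int_\scrD\Psi(r,w)\,d\nu(w)\,dr,
\]
the last equality by Fubini. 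This is the asserted formula.

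It remains to see that $\mu(\partial\scrE)=0$. In the coordinates $\Phi$ the set $\scrE$ is $\{(w,r):w\in\scrD,\ X<r<\eta_{-1}(w)\}$, whose boundary is supported on the hypersurfaces $\{r=X\}$ and $\{r=\eta_{-1}(w)\}$, on $\{w\in\partial\scrD\}$, on the section $\scrH_\kappa(0)$, and on the discontinuity locus of $\eta_{-1}$. The first, second and fourth of these are codimension one, hence $\mu$-null; the third is $\mu$-null because $\partial\scrD$ has Lebesgue measure zero and $\nu=\e^{-t}dt\,ds$ is absolutely continuous on $\Sigma$; and the last is $\mu$-null because $\eta_{-1}$ is continuous off a $\nu$-null set for this family of cuspidal cross-sections, a fact belonging to the return-time analysis of Sections \ref{secHitting}--\ref{secMultiple} and the constructions cited there (in the modular case the discontinuities sit over the Farey fractions). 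If one prefers to avoid even this regularity input, one can sandwich $\scrE$ between two honest $\mu$-continuity sets by replacing $X$ with $X\pm\varepsilon$ and $\scrD$ by slightly larger/smaller sets with null boundary, then let $\varepsilon\downarrow0$, using that the closed form for $\mu(\scrE)$ is continuous in $(X,\scrD)$. I expect precisely this point --- showing that $\scrE$ is a Liouville continuity set for the non-compact section $\scrH_\kappa(0)$ --- to be the only step that is not entirely soft; the rest is scaling, mixing, and Kac's formula.
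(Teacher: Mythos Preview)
Your proof is correct and follows the same route as the paper: scale via \eqref{scaling} to write the event as $\varphi_R\scrE$ for the fixed set $\scrE=\{x:\xi_1(x,0)>X,\ w_1(x,0)\in\scrD\}$, apply admissibility \eqref{mixeq}, and evaluate $\mu(\scrE)$ in the flow-box coordinates over $\scrH_\kappa(0)$. The only difference is in justifying $\mu(\partial\scrE)=0$: the paper invokes a Siegel/Markov-type inequality (Proposition~\ref{lem-zero}) from the lattice-point picture rather than your direct boundary decomposition in the chart, but you correctly identified this as the one non-soft step and deferred it to the same later material.
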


\begin{proof}
By the scaling property \eqref{scaling}, we have
\begin{equation}\label{limlaw2}
\begin{split}
&\lambda\{ x_0\in\scrX :  \e^{-R}\xi_1( x_0,R) >X, \; w_1(x_0,R) \in\scrD  \} \\
&= \lambda\{ x_0\in\scrX :  \xi_1( \varphi_{-R} x_0,0) >X, \; w_1(\varphi_{-R} x_0,0) \in\scrD  \} \\
&=\lambda(\varphi_{R}\scrE) ,
\end{split}
\end{equation}
where 
\begin{equation}
\begin{split}
\scrE & = \{ x\in\scrX :  \xi_1( x,0) >X, \; w_1(x,0) \in\scrD  \} \\
& = \iota_0 \{ (r,w) \in \RR_{>0}\times \Sigma :  X<r\leq \eta_{-1}(w),\; w\in\scrD \},
\end{split}
\end{equation}
and hence
\begin{equation}
\mu(\scrE) = \int_X^\infty \int_\scrD \Psi(r,w) d\nu(w) dr  .
\end{equation}

The right hand side of \eqref{limlaw2} converges by the assumption on $\lambda$ to $\mu(\scrE)$; recall \eqref{mixeq}. The required $\mu(\partial\scrE)=0$ follows from the assumption that $\scrD_j$ has boundary of Lebesgue measure zero, see Proposition \ref{lem-zero} below. This in turn implies that $\Psi\in\L^1(\RR_{>0}\times\Sigma,\vol_\RR\otimes\nu)$ and hence, by our normalisation, a probability density.
\end{proof}

Instead of hitting times for the Poincar\'e section $\scrH_\kappa(R)$ in the unit tangent bundle $\scrX$, we can consider the $j$th entry time $\xi^\pi_j(x_0,R)$ to its projection $\overline\scrH_\kappa(R)=\pi\scrH_\kappa(R)$ to the surface $\scrY$. The pre-image $\widehat\scrH_\kappa(R)=\pi^{-1}\overline\scrH_\kappa(R)$ is a full-dimensional set.
 The orbit of $x_0$ after entering the section at time $\xi^\pi_j(x_0,R)$ spends time $2\delta_j(x_0,R)$ (the s\'ejour time) in $\widehat\scrH_\kappa(R)$, where 
\begin{equation}\label{sejour}
\delta_j(x_0,R) = (\e^{t_j(x_0,R)}-1)^{1/2} .
\end{equation}
Here $t_j=t_j(x_0,R)$ is the second component (the vertical ``height'') of the impact parameter $w_j=(s_j,t_j)\in\Sigma$. This follows from an exercise in hyperbolic geometry, see Figure \ref{fig-ex}.
As the maximum height is achieved at half the s\'ejour time, we have the relation
\begin{equation}
\xi^\pi_j(x_0,R) = \xi_j(x_0,R) - \delta_j(x_0,R) .
\end{equation}

\begin{figure}
          \centerline{
    \begin{tikzpicture}[thick,scale=0.70, every node/.style={scale=1}]
\draw[black] (0,0) circle (5cm);
\draw [black,dashed,domain=-47:90] plot ({5.1*cos(\x)}, {5.1*sin(\x)});
\draw[black] (-7,-5) -- (7,-5);
       \draw[fill,black] (0,-5) circle (0.05cm);
            \node at (0.0, -5.3) {$0$};
            \draw[black,->] (0,-5) -- (0,6);
               \draw[black] (-7, -3.7) -- (7,-3.7);
                      \draw[fill,black] (0.0,-3.7) circle (0.05cm);
                       \node at (0.35, -3.3) {$\i a$};
                                             \draw[fill,black] (3.35,-3.7) circle (0.05cm);
                                                            \node at (2.8,-3.3){$\ell+\i a$};
                                             \draw[->,black] (3.35,-3.7) --(4,-4.5);
                  \draw[fill,black] (0, 5) circle (0.05cm);
                       \node at (0.4,5.35){$\i b$};
\end{tikzpicture}
}
\caption{An exercise in hyperbolic geometry: The figure shows a horocycle through the points $0$, $\ell+\i a$ and $\i b$, with unit tangent vectors pointing outwards, the unstable horocycle flow $h^+$ moves points in counter-clockwise motion. The objective is to calculate how long it takes to move with unit speed along the horocycle from the point $\ell+\i a$ to $\i b$, with $l\geq 0$. That is, we wish to compute the length of the horocyclic segment from $\ell+\i a$ to $\i b$. The M\"obius transformation $z\mapsto -1/z$ maps this horocycle to the horocycle $\scrC'=\{x+\i b^{-1} : x\in\RR\}$. The point $\i b$ is mapped to $\i b^{-1}$ and the point $\ell+\i a$ to $\frac{-\ell+\i a}{\ell^2+a^2}$. Since this point has to lie on $\scrC'$, we have $b^{-1}= \frac{a}{\ell^2+a^2}$, and so $\ell= (ab-a^2)^{1/2}$. Since M\"obius transformations act by isometries, the length of the horocyclic segment from  $\ell+\i a$ to $\i b$ is the same as the length of the horocyclic segment from $\i b^{-1}$ to $\frac{-\ell+\i a}{\ell^2+a^2}= -\frac{\ell}{ab}+\i b^{-1}$. The latter evaluates to $\frac{\ell}{a}=(\frac{b}{a}-1)^{1/2}$. This relation yields formula \eqref{sejour} for half of the s\'ejour time $\delta_j$ now follows by setting $a=\e^R$ and $b=\e^{R+t_j}$. 
  \label{fig-ex}}
\end{figure}
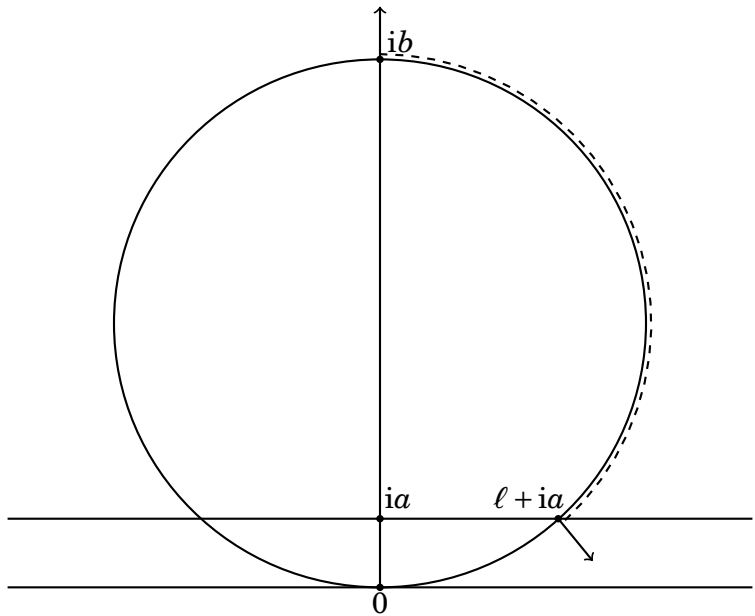

Define the probability density
\begin{equation}\label{Psidef}
\Psi(r) = \int_\Sigma  \Psi(r,w)\,  d\nu(w) .
\end{equation}

\begin{cor}\label{cusp-first-cor0}
Assume $\lambda$ is admissible.  Then, for $X>0$, 
\begin{equation}\label{psi1}
\lim_{R \to \infty} \lambda\{ x_0\in\scrX :  \e^{-R}\xi_1( x_0,R) >X \} =  \int_X^\infty  \Psi(r)  dr 
\end{equation}
and
\begin{equation}\label{psi2}
\lim_{R \to \infty} \lambda\{ x_0\in\scrX :  \e^{-R}\xi^\pi_1( x_0,R) >X \} =  \int_X^\infty \Psi(r)  dr .
\end{equation}
\end{cor}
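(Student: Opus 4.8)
The plan is to deduce Corollary \ref{cusp-first-cor0} directly from Theorem \ref{cusp-first} in two steps: first \eqref{psi1} by marginalising over the impact parameter, and then \eqref{psi2} by controlling the difference between $\xi_1$ and $\xi^\pi_1$, which is the séjour correction $\delta_1$. For \eqref{psi1}, I would apply Theorem \ref{cusp-first} with $\scrD=\Sigma$. The set $\scrD=\Sigma$ has empty topological boundary inside $\Sigma$ (it is the whole space), so in particular its boundary has Lebesgue measure zero, and Theorem \ref{cusp-first} applies verbatim to give
\begin{equation}
\lim_{R\to\infty}\lambda\{x_0\in\scrX:\e^{-R}\xi_1(x_0,R)>X\}=\int_X^\infty\int_\Sigma\Psi(r,w)\,d\nu(w)\,dr=\int_X^\infty\Psi(r)\,dr,
\end{equation}
which is exactly \eqref{psi1}, with $\Psi(r)$ as defined in \eqref{Psidef}. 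One should note in passing that the Fubini step is justified because $\Psi\in\L^1(\RR_{>0}\times\Sigma,\vol_\RR\otimes\nu)$, as established in the proof of Theorem \ref{cusp-first}.

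For \eqref{psi2} the idea is that $\e^{-R}\delta_1(x_0,R)\to 0$ as $R\to\infty$ for the relevant range of the height $t_1$, so that $\e^{-R}\xi^\pi_1$ and $\e^{-R}\xi_1$ have the same limiting law. Concretely, by \eqref{sejour} we have $\delta_1(x_0,R)=(\e^{t_1(x_0,R)}-1)^{1/2}$, where $t_1$ is the height component of the impact parameter $w_1=(s_1,t_1)\in\Sigma$, and $\xi^\pi_1(x_0,R)=\xi_1(x_0,R)-\delta_1(x_0,R)$. The point is that under the limiting measure the height $t_1$ has a distribution with exponentially decaying tails (the factor $\e^{-t}$ in $d\nu$), so large heights occur with small probability; more precisely, I would fix $\varepsilon>0$ and a large height cutoff $M$, note that by Theorem \ref{cusp-first} applied to $\scrD=[0,1)\times[M,\infty)$ the $\lambda$-probability that $t_1(x_0,R)>M$ is, for $R$ large, bounded by $\int_0^\infty\int_{[0,1)\times[M,\infty)}\Psi(r,w)\,d\nu(w)\,dr+\varepsilon$, which is itself $<2\varepsilon$ once $M$ is large. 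On the complementary event $\{t_1\le M\}$ we have $0\le\delta_1(x_0,R)\le(\e^M-1)^{1/2}$, a constant independent of $R$, hence $0\le\e^{-R}\delta_1(x_0,R)\le\e^{-R}(\e^M-1)^{1/2}\to 0$. Therefore, given $X>0$ and $\varepsilon>0$, for $R$ large enough we have the inclusions
\begin{equation}
\{\e^{-R}\xi_1>X+\e^{-R}(\e^M-1)^{1/2}\}\cap\{t_1\le M\}\subseteq\{\e^{-R}\xi^\pi_1>X\}\subseteq\{\e^{-R}\xi_1>X\}\cup\{t_1>M\},
\end{equation}
since $\xi^\pi_1\le\xi_1$ always, and on $\{t_1\le M\}$ we have $\xi^\pi_1\ge\xi_1-(\e^M-1)^{1/2}$.

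Taking $\lambda$-measures and letting $R\to\infty$, the lower inclusion gives $\liminf_R\lambda\{\e^{-R}\xi^\pi_1>X\}\ge\int_X^\infty\Psi(r)\,dr-2\varepsilon$, where I have used \eqref{psi1} together with the continuity of $X\mapsto\int_X^\infty\Psi$ (so that the vanishing perturbation $\e^{-R}(\e^M-1)^{1/2}$ of the threshold is harmless), and the subtraction of the probability of $\{t_1>M\}$. The upper inclusion gives $\limsup_R\lambda\{\e^{-R}\xi^\pi_1>X\}\le\int_X^\infty\Psi(r)\,dr+2\varepsilon$. Since $\varepsilon>0$ was arbitrary, the limit exists and equals $\int_X^\infty\Psi(r)\,dr$, which is \eqref{psi2}. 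The main (and really only) technical obstacle is the bookkeeping around the shifted threshold $X+\e^{-R}(\e^M-1)^{1/2}$ and the exceptional large-height set: one must make sure the cutoff $M$ is chosen before $R\to\infty$, exploit that the séjour correction is uniformly bounded once $t_1\le M$, and invoke continuity of the limit distribution function at $X$ — which holds because $\Psi\in\L^1$ — to absorb the vanishing perturbation. No new input beyond Theorem \ref{cusp-first} and elementary measure theory is needed.
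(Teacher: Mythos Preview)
Your proof is correct and follows essentially the same route as the paper: \eqref{psi1} by taking $\scrD=\Sigma$ in Theorem \ref{cusp-first}, and \eqref{psi2} by a squeeze argument using a height cutoff (your $M$ is the paper's $\log(\delta^2+1)$) together with the continuity of $X\mapsto\int_X^\infty\Psi$. Two minor remarks: for the upper bound in \eqref{psi2} the paper simply uses $\xi^\pi_1\le\xi_1$, so $\{\e^{-R}\xi^\pi_1>X\}\subseteq\{\e^{-R}\xi_1>X\}$ already gives $\limsup\le\int_X^\infty\Psi$ without any reference to $\{t_1>M\}$; and for the lower bound the paper applies Theorem \ref{cusp-first} directly to the joint event $\{\e^{-R}\xi_1>X+\e^{-R}\delta,\;w_1\in\scrD_\delta\}$ rather than splitting off $\lambda\{t_1>M\}$ separately, which sidesteps the small issue that Theorem \ref{cusp-first} is stated only for $X>0$.
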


\begin{proof}
Relation \eqref{psi1} follows from Theorem \ref{cusp-first} with $\scrD=\Sigma$. As to \eqref{psi2}, 
we have the upper bound
\begin{equation}
 \lambda\{ x_0\in\scrX :  \e^{-R}\xi^\pi_1( x_0,R) >X \}
 \leq
  \lambda\{ x_0\in\scrX :  \e^{-R}\xi_1( x_0,R) >X  \}
\end{equation}
and hence
\begin{equation}
\limsup_{R\to\infty} \lambda\{ x_0\in\scrX :  \e^{-R}\xi^\pi_1( x_0,R) >X \}  \leq 
\int_X^\infty \Psi(r)  dr .
\end{equation}
For the lower bound, for any fixed $\delta>0$, we have
\begin{equation}\label{low20}
\begin{split}
 \lambda\{ x_0\in\scrX :  \e^{-R}\xi^\pi_1( x_0,R) >X \} 
& \geq  \lambda\{ x_0\in\scrX :  \e^{-R}\xi^\pi_1( x_0,R) >X , \; \delta_1( x_0,R)\leq \delta \} \\
& \geq  \lambda\{ x_0\in\scrX :  \e^{-R}\xi_1( x_0,R) >X +\e^{-R} \delta , \; \delta_1( x_0,R)\leq \delta \} .
\end{split}
\end{equation}
Therefore Theorem \ref{cusp-first}, applied with $\scrD=\scrD_\delta=[0,1)\times [0,\log(\delta^2+1) ]$, yields
\begin{equation}
\liminf_{R\to\infty} \lambda\{ x_0\in\scrX :  \e^{-R}\xi^\pi_1( x_0,R) >X \}  \geq 
\int_X^\infty \int_{\scrD_\delta} \Psi(r,w) d\nu(w) dr  .
\end{equation}
We have here used the continuity of the limit in $X$. Since this holds for any $\delta>0$, we may take $\delta\to\infty$ in the above expression, which shows that the lim inf equals the lim sup.
\end{proof}

Theorem \ref{cusp-first} equally holds for the distribution of the $j$th hitting time and impact parameter, and indeed for the joint distribution of the first $N$ hitting times and impact parameters. The following theorem captures this, by stating that the process $$\left(\e^{-R} \xi_j(x_0,R),\;w_j(x_0,R)\right)_{j\in\NN}$$ converges (in finite-dimensional distribution) to $$\left(\xi_j(x,0),\;w_j(x,0)\right)_{j\in\NN}$$ where $x_0$ is distributed according to $\lambda$, and $x$ according to $\mu$.

\begin{thm}\label{cusp-main}
Assume $\lambda$ is admissible. Then, for $N\in\NN$, $k_1,\ldots, k_N\in\ZZ_{\geq 0}$, $0<T_1\leq \ldots\leq T_N$, and $\scrD_1,\ldots,\scrD_N\subset\Sigma$ Borel sets with boundary of Lebesgue measure zero, 
\begin{multline}
\lim_{R\to 0} \lambda\left\{ x_0\in \scrX : \#\{ j : \e^{-R} \xi_j(x_0,R)\leq T_j,\;w_j(x_0,R) \in\scrD_i \} = k_i \forall i\leq N \right\} \\
=\mu\left\{ x\in \scrX : \#\{ j : \xi_j(x,0)\leq T_j ,\; w_j(x,0) \in\scrD_i \} = k_i \forall i\leq N \right\}.
\end{multline}
\end{thm}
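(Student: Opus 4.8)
The plan is to reduce Theorem~\ref{cusp-main} to the admissibility hypothesis \eqref{mixeq} exactly as in the proof of Theorem~\ref{cusp-first}, the only difference being that the event we track is now a finite-dimensional cylinder event in the hitting-time/impact-parameter process rather than a single hitting time. First I would apply the scaling relations \eqref{scaling} to each of the finitely many hits that matter: since $\xi_j(x_0,R)=\e^R\xi_j(\varphi_{-R}x_0,0)$ and $w_j(x_0,R)=w_j(\varphi_{-R}x_0,0)$, the condition $\e^{-R}\xi_j(x_0,R)\le T_j$, $w_j(x_0,R)\in\scrD_i$ becomes $\xi_j(\varphi_{-R}x_0,0)\le T_j$, $w_j(\varphi_{-R}x_0,0)\in\scrD_i$, with no $R$-dependence on the right. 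Hence the probability in question equals $\lambda(\varphi_R\scrE)$, where
\[
\scrE=\left\{ x\in\scrX : \#\{ j : \xi_j(x,0)\le T_j,\; w_j(x,0)\in\scrD_i \}=k_i\ \forall i\le N \right\}.
\]
By \eqref{mixeq} this converges to $\mu(\scrE)$, which is precisely the right-hand side, provided we check $\mu(\partial\scrE)=0$.

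The key point is therefore that $\scrE$ has boundary of $\mu$-measure zero, and this is where the main work lies. Using the parametrisation $(r,w)\mapsto \iota_0(w)$ shifted backwards by $h^+_{-r}$ — equivalently, the coordinates $(r,s,t)\mapsto h_r^+\circ\varphi_t\circ h_s^-(x_\kappa)$ with $0\le r<\eta_1(s,t)$ introduced after \eqref{sejour} — the condition ``$\xi_j(x,0)\le T_j$'' is a condition on the successive return times $\eta_1,\eta_1+\eta_2,\dots$ measured from the base point $\iota_0(w)$, and ``$w_j(x,0)\in\scrD_i$'' is a condition on the successive images of $w$ under the first-return map $F:\Sigma\to\Sigma$. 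Since only $j\le\max_i k_i + $ (number of hits up to time $T_N$) are involved, $\scrE$ is, up to $\mu$-null sets, a finite union and intersection of sets of the form $\{\eta_1+\dots+\eta_m\le T\}$, $\{\eta_1+\dots+\eta_m>T\}$, $\{F^m w\in\scrD_i\}$, $\{r<\text{(stuff)}\}$. The boundary of each such set is controlled by (a) the Lebesgue-null boundaries of the $\scrD_i$, pulled back under the finitely many iterates $F^m$, which preserve the null property because $F$ is a.e. a local diffeomorphism with the Jacobian bounds implicit in \eqref{Kac}; (b) the level sets of the return-time cocycle $\eta_1+\dots+\eta_m$, which are Lebesgue-null because $\eta_1$ is non-constant and piecewise smooth on a full-measure set (this is exactly the content of Proposition~\ref{lem-zero}, which is invoked in the proof of Theorem~\ref{cusp-first} and applies verbatim here, cf.\ the reduction in \eqref{low20}); and (c) the hyperplane $\{r=0\}$ and the roof $\{r=\eta_1(w)\}$, which are null. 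A finite union of $\mu$-null sets being $\mu$-null, we get $\mu(\partial\scrE)=0$.

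Concretely I would phrase this as: the map $\Phi:\Sigma\times\NN\to \scrX$ sending $(w,m)$ and a residual coordinate to the $m$-fold return orbit is measure-theoretically an isomorphism onto a full-measure subset (this is the suspension representation already used to write down $d\mu(r,s,t)$), the counting functions $x\mapsto\#\{j:\xi_j(x,0)\le T_j, w_j\in\scrD_i\}$ are, in these coordinates, piecewise constant with jump set contained in the union over $m$ of $\partial F^{-m}\scrD_i$ and $\{\eta_1+\cdots+\eta_m=T_j\}$, and each such piece is null by (a)--(c) above. The main obstacle, and the only genuinely nontrivial ingredient, is part (a)-(b): that the boundaries of the sets cutting out $\scrE$ are $\mu$-null, i.e.\ a quantitative version of Proposition~\ref{lem-zero} iterated finitely many times under the return map; once this is granted, the theorem follows immediately by applying \eqref{mixeq} to $\scrE$, and the passage from the single-hit statement of Theorem~\ref{cusp-first} to the joint statement is purely bookkeeping. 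One should also record, as in Theorem~\ref{cusp-first}, that the limiting object is a genuine point process, i.e.\ that $\xi_j(x,0)\to\infty$ as $j\to\infty$ for $\mu$-a.e.\ $x$, so that the counts are a.s.\ finite; this is immediate from ergodicity of $h^+$ and finiteness of $\overline\eta_1$.
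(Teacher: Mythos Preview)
Your overall strategy is exactly the paper's: apply the scaling relations \eqref{scaling} to rewrite the probability as $\lambda(\varphi_R\scrE)$ for a fixed set $\scrE$, then invoke admissibility \eqref{mixeq}, with the only issue being $\mu(\partial\scrE)=0$. Where you diverge is in how you justify this last point.

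Your route through the suspension coordinates and the return map $F$ is more laborious than needed, and your description of Proposition~\ref{lem-zero} is not quite right: it is not a statement about level sets of $\eta_1$ or about smoothness of the return map. Proposition~\ref{lem-zero} is a Siegel-type volume bound, proved via the lattice-point reformulation of Section~\ref{secHitting}, which says directly that
\[
\mu\bigl\{x:\ \exists j\ \text{with}\ (\xi_j(x,0),w_j(x,0))\in\scrB\bigr\}\le \mu(A(\scrB)).
\]
The paper applies this in one step: $\partial\scrE$ is contained in the set of $x$ having at least one hit landing in $\bigcup_i \partial\bigl([0,T_i]\times\scrD_i\bigr)$, and since each $\partial([0,T_i]\times\scrD_i)$ has Lebesgue measure zero by hypothesis, $\mu(A(\partial([0,T_i]\times\scrD_i)))=0$, so $\mu(\partial\scrE)=0$. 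There is no need to iterate $F$, control its Jacobian, or analyse the cocycle $\eta_1+\cdots+\eta_m$ separately. Your approach can be made to work (and indeed Proposition~\ref{lem-zero} with $\scrB=\{T\}\times\Sigma$ would give you your item~(b)), but it introduces unnecessary machinery; the paper's argument is a two-line observation once Proposition~\ref{lem-zero} is available.
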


\begin{proof}
The strategy is identical to that in the proof of Theorem \ref{cusp-first}. In view of \eqref{scaling}, we have
\begin{equation}\label{limlaw22}
\begin{split}
&\lambda\left\{ x_0\in \scrX : \#\{ j : (\e^{-R} \xi_j(x_0,R),w_j(x_0,R)) \in[0,T_j]\times\scrD_i \} = k_i \forall i\leq N \right\} \\
&=\lambda\left\{ x_0\in \scrX : \#\{ j : (\xi_j(\varphi_{-R} x_0,0),w_j(\varphi_{-R} x_0,0)) \in[0,T_j]\times\scrD_i \} = k_i \forall i\leq N \right\} \\
&=\lambda(\varphi_{R}\scrE) ,
\end{split}
\end{equation}
where 
\begin{equation}\scrE = \left\{ x\in \scrX : \#\{ j : (\xi_j( x,0),w_j(x,0)) \in[0,T_j]\times\scrD_i \} = k_i \forall i\leq N \right\}.\end{equation}
The right hand side of \eqref{limlaw22} converges by assumption to $\mu(\scrE)$. The required $\mu(\partial\scrE)=0$ follows from the assumption that $[0,T_j]\times\scrD_j$ has boundary of Lebesgue measure zero, see Proposition \ref{lem-zero} below.
\end{proof}

\section{From hitting times to hyperbolic lattice points}\label{secHitting}

Let $\{\pm 1\}\subset\Gamma\subset\SL(2,\RR)$ be a discrete subgroup so that $\scrY=\Gamma\backslash\HH$, where $\HH$ is the complex upper half plane on which $\SL(2,\RR)$ acts by M\"obius transformations. We can further identify $\scrX=\Gamma\backslash\SL(2,\RR)$ by setting $x=\Gamma g$, and note that the choices
\begin{equation}
\varphi_t(x) = x \begin{pmatrix} \e^{t/2} & 0 \\ 0 & \e^{-t/2} \end{pmatrix}, \quad
h^+_s(x) = x \begin{pmatrix} 1 & 0 \\ -s & 1 \end{pmatrix},\quad
h^-_s(x) = x \begin{pmatrix} 1 & s \\ 0 & 1 \end{pmatrix},
\end{equation}
satisfy the commutation relations \eqref{commute}. By abuse of notation, we also denote by $\mu$ the Haar measure on $\SL(2,\RR)$, normalised so that $\mu(\scrF_\Gamma)=1$ where $\scrF_\Gamma$ is a fundamental domain of $\Gamma$ in $\SL(2,\RR)$.
The injection
\begin{equation}\label{AAAAA}
A: \RR^3\to\SL(2,\RR), \qquad (r,s,t) \mapsto \begin{pmatrix} 1 & s \\  0 & 1 \end{pmatrix} \begin{pmatrix} \e^{t/2} & 0 \\ 0 & \e^{-t/2} \end{pmatrix} \begin{pmatrix} 1 & 0 \\ r & 1 \end{pmatrix}
\end{equation}
provides a parametrisation of $\SL(2,\RR)$ up to a null set, and we have in these coordinates 
\begin{equation}
d\mu(r,s,t) = 
\overline\eta_1^{-1} \; dr \, d\nu(s,t)
\end{equation}
with
\begin{equation}
d\nu(s,t) =\e^{-t} dt\, ds 
\end{equation}
and the same 
$\overline\eta_1$
 as in \eqref{Kac}.

We may assume without loss of generality that the cusp $\kappa$ is presented in $\HH$ as a cusp at infinity of width one. This implies that $\Gamma$ contains the subgroup 
\begin{equation}
\Gamma_\infty=\left\{\begin{pmatrix} 1 & m \\ 0 & 1 \end{pmatrix} : m\in\ZZ \right\} .
\end{equation}
With this choice of coordinates, the section is represented as
\begin{equation}
\scrH_\kappa(R) = \Gamma\backslash\left\{ \Gamma \begin{pmatrix} 1 & s \\  0 & 1 \end{pmatrix} \begin{pmatrix} \e^{t/2} & 0 \\ 0 & \e^{-t/2} \end{pmatrix} : s\in[0,1), t\geq R \right\} ,
\end{equation}
and the sequence of hitting times $(\xi_j(x_0,R))_{j\in\NN}$ is precisely given by the times the lifted horocycle 
\begin{equation}\left\{ g_0 \begin{pmatrix} 1 & 0 \\ -r & 1 \end{pmatrix} : r>0 \right\}\end{equation}
intersects the disjoint union
\begin{equation}
\bigcup_{\gamma\in\Gamma} \left\{ \gamma \begin{pmatrix} 1 & s \\  0 & 1 \end{pmatrix} \begin{pmatrix} \e^{t/2} & 0 \\ 0 & \e^{-t/2} \end{pmatrix} : s\in[0,1), t\geq R \right\}
\end{equation}
where $g_0\in\SL(2,\RR)$ is any representative such that $x_0=\Gamma g_0$. Since the above union is disjoint, the hitting time $\xi_j$ determines a unique $\gamma_j\in\Gamma$ so that
\begin{equation}
g_0 \begin{pmatrix} 1 & 0 \\ -\xi_j & 1 \end{pmatrix} =
\gamma_j \begin{pmatrix} 1 & s \\  0 & 1 \end{pmatrix} \begin{pmatrix} \e^{t/2} & 0 \\ 0 & \e^{-t/2} \end{pmatrix} 
\end{equation}
with $(s+\ZZ,t-R)=w_j(x_0,R)$. On the other hand, every $\gamma\in\Gamma$ such that
\begin{equation}
\gamma g_0 \in\left\{ \begin{pmatrix} 1 & s \\  0 & 1 \end{pmatrix} \begin{pmatrix} \e^{t/2} & 0 \\ 0 & \e^{-t/2} \end{pmatrix} \begin{pmatrix} 1 & 0 \\ r & 1 \end{pmatrix} : r\in(0,T], s\in[0,1), t\geq R \right\} 
\end{equation}
yields a unique $(r,s,t)$ and $j$ such that $(\xi_j(x_0,R),w_j(x_0,R))=(r,s+\ZZ,t-R)$.

 We conclude that, for $\scrB\subset\mathbb R_{\geq0} \times \Sigma$, we have
\begin{multline}\label{pio}
\#\{ j : (\xi_j(x_0,R),w_j(x_0,R)) \in\scrB \} \\
= \# \left( \Gamma g_0 \cap \left\{ \begin{pmatrix} 1 & s \\  0 & 1 \end{pmatrix} \begin{pmatrix} \e^{t/2} & 0 \\ 0 & \e^{-t/2} \end{pmatrix} \begin{pmatrix} 1 & 0 \\ r & 1 \end{pmatrix} : (r,s,t-R)\in\scrB \right\} \right) ,
\end{multline}
which we restate in the following propositon.

\begin{prop}\label{pointlem}
For $R\in\RR$ and $\scrB\subset\RR_{>0}\times\Sigma$,
\begin{equation}
\#\{ j : (\e^{-R} \xi_j(x_0,R),w_j(x_0,R)) \in\scrB \} 
= \# \left( \Gamma g_0  \begin{pmatrix} \e^{-R/2} & 0 \\ 0 & \e^{R/2} \end{pmatrix} \cap A(\scrB)  \right) .
\end{equation}
\end{prop}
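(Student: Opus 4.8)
The plan is to derive Proposition \ref{pointlem} directly from the counting identity \eqref{pio} established just above, combined with the scaling property \eqref{scaling}. There is essentially no new content to prove: we just repackage \eqref{pio} in a form that makes the role of the geodesic translation $\varphi_{-R}$ and the parametrisation map $A$ explicit.

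First I would observe that by \eqref{scaling} we have $\xi_j(x_0,R)=\e^R\xi_j(\varphi_{-R}x_0,0)$ and $w_j(x_0,R)=w_j(\varphi_{-R}x_0,0)$, so that
\begin{equation*}
\#\{ j : (\e^{-R}\xi_j(x_0,R),w_j(x_0,R))\in\scrB\}
=\#\{ j : (\xi_j(\varphi_{-R}x_0,0),w_j(\varphi_{-R}x_0,0))\in\scrB\}.
\end{equation*}
A representative of $\varphi_{-R}x_0=\Gamma g_0\left(\begin{smallmatrix}\e^{-R/2}&0\\0&\e^{R/2}\end{smallmatrix}\right)$ is $g_0\left(\begin{smallmatrix}\e^{-R/2}&0\\0&\e^{R/2}\end{smallmatrix}\right)$. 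Next I would apply \eqref{pio} with $R$ replaced by $0$, $g_0$ replaced by this representative, and $\scrB$ as given: this rewrites the right-hand side as $\#\bigl(\Gamma g_0\left(\begin{smallmatrix}\e^{-R/2}&0\\0&\e^{R/2}\end{smallmatrix}\right)\cap A(\scrB)\bigr)$, using that the set $\{(\begin{smallmatrix}1&s\\0&1\end{smallmatrix})(\begin{smallmatrix}\e^{t/2}&0\\0&\e^{-t/2}\end{smallmatrix})(\begin{smallmatrix}1&0\\r&1\end{smallmatrix}) : (r,s,t)\in\scrB\}$ is exactly $A(\scrB)$ by definition \eqref{AAAAA} of $A$. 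This completes the proof.

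The only point requiring any care — and it is minor — is the bookkeeping of which coordinate shift is absorbed where: \eqref{pio} is phrased with the shift $t\mapsto t-R$ built into the section, whereas Proposition \ref{pointlem} has the shift moved onto the base point via the geodesic flow, so one must check that applying \eqref{pio} at level $R=0$ to the translated point $\varphi_{-R}x_0$ yields precisely $A(\scrB)$ with no residual shift. This is immediate from \eqref{scaling}, which says exactly that the two descriptions of the hitting data agree after the substitution $t\mapsto t-R$. I do not anticipate any genuine obstacle; the statement is a clean reformulation whose purpose is to set up the lattice-point counting argument of the subsequent section.
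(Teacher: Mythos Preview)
Your proof is correct but takes a slightly different route from the paper. You invoke the scaling relation \eqref{scaling} to replace $(x_0,R)$ by $(\varphi_{-R}x_0,0)$, and then apply \eqref{pio} once at level $R=0$ to the translated representative $g_0\left(\begin{smallmatrix}\e^{-R/2}&0\\0&\e^{R/2}\end{smallmatrix}\right)$. The paper instead applies \eqref{pio} directly at level $R$ and then performs an explicit chain of matrix manipulations: a change of variables $(r,t)\mapsto(r\e^R,t+R)$ in the parametrising set, followed by the factorisation
\[
\begin{pmatrix}\e^{(t+R)/2}&0\\0&\e^{-(t+R)/2}\end{pmatrix}\begin{pmatrix}1&0\\r\e^R&1\end{pmatrix}
=\begin{pmatrix}\e^{t/2}&0\\0&\e^{-t/2}\end{pmatrix}\begin{pmatrix}1&0\\r&1\end{pmatrix}\begin{pmatrix}\e^{R/2}&0\\0&\e^{-R/2}\end{pmatrix},
\]
and finally moving the right-most diagonal matrix across the intersection. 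Your argument is the more economical of the two, since the matrix computation in the paper is precisely a re-derivation, in coordinates, of the scaling relation \eqref{scaling} that you simply quote. The paper's version has the minor advantage of being entirely self-contained at the matrix level, making the group-theoretic origin of the scaling visible.
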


\begin{proof}
Using \eqref{pio}, we have
\begin{equation}
\begin{split}
& \#\{ j : (\e^{-R} \xi_j(x_0,R),w_j(x_0,R)) \in\scrB \} \\
& = \# \left( \Gamma g_0  \cap \left\{ \begin{pmatrix} 1 & s \\  0 & 1 \end{pmatrix} \begin{pmatrix} \e^{t/2} & 0 \\ 0 & \e^{-t/2} \end{pmatrix} \begin{pmatrix} 1 & 0 \\ r & 1 \end{pmatrix} : (r \e^{-R},s,t-R)\in\scrB \right\} \right) \\
& = \# \left( \Gamma g_0  \cap \left\{ \begin{pmatrix} 1 & s \\  0 & 1 \end{pmatrix} \begin{pmatrix} \e^{(t+R)/2} & 0 \\ 0 & \e^{-(t+R)/2} \end{pmatrix} \begin{pmatrix} 1 & 0 \\ r\e^{R} & 1 \end{pmatrix} : (r,s,t)\in\scrB \right\} \right) \\
& = \# \left( \Gamma g_0  \cap \left\{ \begin{pmatrix} 1 & s \\  0 & 1 \end{pmatrix} \begin{pmatrix} \e^{t/2} & 0 \\ 0 & \e^{-t/2} \end{pmatrix} \begin{pmatrix} 1 & 0 \\ r & 1 \end{pmatrix} \begin{pmatrix}\e^{R/2} & 0 \\ 0 & \e^{-R/2} \end{pmatrix} : (r,s,t)\in\scrB \right\} \right) \\
& = \# \left( \Gamma g_0  \begin{pmatrix} \e^{-R/2} & 0 \\ 0 & \e^{R/2} \end{pmatrix} \cap \left\{ \begin{pmatrix} 1 & s \\  0 & 1 \end{pmatrix} \begin{pmatrix} \e^{t/2} & 0 \\ 0 & \e^{-t/2} \end{pmatrix} \begin{pmatrix} 1 & 0 \\ r & 1 \end{pmatrix} : (r,s,t)\in\scrB \right\} \right) .
\end{split}
\end{equation}
\end{proof}

\begin{prop}\label{lem-zero}
For any Borel set $\scrB\subset\RR_{>0}\times\Sigma$,
\begin{equation}
\mu\left\{ x\in\scrX: \#\left\{ j : (\xi_j(x,0),w_j(x,0)) \in\scrB \right\} \geq 1 \right\} \leq \mu\left(A(\scrB)\right).
\end{equation}
\end{prop}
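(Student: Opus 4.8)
The plan is to reduce the measure of the "hit at least once" set to an integral over $\scrX$ that, after unfolding the $\Gamma$-sum, becomes at most the $\mu$-measure of $A(\scrB)$. First I would apply Proposition \ref{pointlem} with $R=0$, which gives the pointwise identity
\begin{equation*}
\#\{ j : (\xi_j(x,0),w_j(x,0)) \in\scrB \} = \#\bigl( \Gamma g_0 \cap A(\scrB) \bigr),
\end{equation*}
where $x=\Gamma g_0$. Hence
\begin{equation*}
\one\bigl\{ \#\{ j : (\xi_j(x,0),w_j(x,0)) \in\scrB \} \geq 1 \bigr\}
\leq \#\bigl( \Gamma g_0 \cap A(\scrB) \bigr)
= \sum_{\gamma\in\Gamma} \one_{A(\scrB)}(\gamma g_0),
\end{equation*}
the inequality being simply that an indicator of nonemptiness is bounded by the cardinality. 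Integrating over $x\in\scrX$ against $\mu$ (normalised so $\mu(\scrF_\Gamma)=1$) and using the standard unfolding for a $\Gamma$-periodic sum on $\Gamma\backslash\SL(2,\RR)$,
\begin{equation*}
\int_{\scrX} \sum_{\gamma\in\Gamma} \one_{A(\scrB)}(\gamma g_0)\, d\mu(x)
= \int_{\SL(2,\RR)} \one_{A(\scrB)}(g)\, d\mu(g)
= \mu(A(\scrB)),
\end{equation*}
which yields exactly the claimed bound.

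The only genuinely careful point is the unfolding step: one must know that $A(\scrB)$ has a well-defined measure (the statement assumes $\scrB$ measurable, and $A$ is a smooth injective parametrisation of $\SL(2,\RR)$ up to a null set, so $A(\scrB)$ is measurable and the explicit Jacobian $d\mu(r,s,t)=\overline\eta_1^{-1}dr\,d\nu(s,t)$ makes the change of variables legitimate), and that the sum $\sum_{\gamma\in\Gamma}\one_{A(\scrB)}(\gamma g_0)$ is a well-defined measurable function on $\scrX$ so that Tonelli/unfolding applies. Since all summands are nonnegative, no integrability hypothesis is needed to justify interchanging the sum and the integral. I would also remark that the factor $\eta_1$ issue does not arise here because we are comparing $\mu$-measures on both sides consistently; there is no parametrisation mismatch.

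The main obstacle — though it is minor — is bookkeeping around the "up to a null set" caveat in the parametrisation $A$ and in the $(r,s,t)$-coordinates on $\scrX$: one needs the horocycle orbit of $\mu$-a.e.\ $x_0$ to avoid the lower-dimensional exceptional set where these coordinates degenerate, and one needs the identification of hitting times with the lattice-point count to be exact rather than merely a.e. This is already implicit in the derivation of \eqref{pio} and Proposition \ref{pointlem}, so for the present statement it suffices to note that removing a $\mu$-null set from $\scrX$ changes neither side of the inequality. Everything else is the routine unfolding computation sketched above.
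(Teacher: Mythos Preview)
Your proposal is correct and follows essentially the same route as the paper: apply Proposition \ref{pointlem} with $R=0$, bound the indicator of ``at least one hit'' by the lattice-point count (what the paper phrases as Markov/Chebyshev), and unfold the $\Gamma$-sum over a fundamental domain to obtain $\mu(A(\scrB))$. Your additional remarks on measurability and the null-set caveat for the parametrisation $A$ are sound but not strictly needed for the argument as the paper presents it.
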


\begin{proof}
By Proposition \ref{pointlem} and Markov's (or Chebyshev's) inequality
\begin{equation}
\begin{split}
& \mu\left\{ x\in\scrX: \#\left\{ j : \left(\xi_j(x,R),w_j(x,R)\right) \in\scrB \right\} \geq 1 \right\} \\
& = \mu\left\{ g\in\tilde\scrF_\Gamma: \# \left( \Gamma g \cap A(\scrB) \right) \geq 1 \right\}  \\
& \leq \int_{\tilde\scrF_\Gamma} \sum_{\gamma\in\Gamma} \chi_{A(\scrB)} (\gamma g) d\mu(g),
\end{split}
\end{equation}
where $\chi_{A(\scrB)}$ is the characteristic function of the set $A(\scrB)$. Sum and integral can be combined to an integral over all of $\SL(2,\RR)$, and the result follows. 
\end{proof}

\begin{prop}\label{cusp-main-prop}
Let $N\in\NN$, $k_1,\ldots, k_N\in\ZZ_{\geq 0}$, $0<T_1\leq \ldots\leq T_N$, and $\scrD_1,\ldots,\scrD_N\subset\Sigma$ Borel sets with boundary of Lebesgue measure zero. Then
\begin{multline}
\mu\left\{ x\in \scrX : \#\left\{ j : \left(\xi_j(x,0),w_j(x,0)\right) \in(0,T_i]\times\scrD_i \right\} = k_i \forall i\leq N \right\} \\
=\mu\left\{ g\in\scrF_\Gamma : \#\left\{ \Gamma g \cap A\left((0,T_i]\times\scrD_i\right) \right\} = k_i \forall i\leq N \right\} .
\end{multline}
\end{prop}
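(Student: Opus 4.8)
The plan is to reduce Proposition \ref{cusp-main-prop} to an application of Proposition \ref{pointlem} at $R=0$. By that proposition,
\begin{equation*}
\#\{ j : (\xi_j(x,0),w_j(x,0)) \in(0,T_i]\times\scrD_i \} = \#\left( \Gamma g \cap A((0,T_i]\times\scrD_i) \right)
\end{equation*}
whenever $x=\Gamma g$, since the scaling matrix in Proposition \ref{pointlem} becomes the identity when $R=0$. Here $(0,T_i]\times\scrD_i\subset\RR_{>0}\times\Sigma$ is of the required form, so $A((0,T_i]\times\scrD_i)$ is the image under the parametrisation $A$ of \eqref{AAAAA}. Thus the two events whose measures appear on the left- and right-hand sides coincide \emph{set-theoretically} once one identifies $\scrX=\Gamma\backslash\SL(2,\RR)$ with $\scrF_\Gamma$ via a choice of fundamental domain.

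The remaining point is a bookkeeping check that the measures match. First I would fix a fundamental domain $\scrF_\Gamma$ for $\Gamma$ in $\SL(2,\RR)$ and recall that, under the identification $x=\Gamma g$ with $g\in\scrF_\Gamma$, the Liouville measure $\mu$ on $\scrX$ corresponds to the restriction of Haar measure $\mu$ on $\SL(2,\RR)$ to $\scrF_\Gamma$ (this is exactly the normalisation fixed just before the statement, $\mu(\scrF_\Gamma)=1$). The counting function $g\mapsto \#(\Gamma g\cap A(\scrB))$ is $\Gamma$-left-invariant, hence descends to a well-defined function on $\scrX$, and its value at $x=\Gamma g$ is independent of the choice of representative $g$; so the set $\{ g\in\scrF_\Gamma : \#(\Gamma g\cap A((0,T_i]\times\scrD_i))=k_i\ \forall i\}$ is precisely the image in $\scrF_\Gamma$ of $\{ x\in\scrX : \#\{j:(\xi_j(x,0),w_j(x,0))\in(0,T_i]\times\scrD_i\}=k_i\ \forall i\}$. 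Taking $\mu$-measures of both sides gives the claimed identity.

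There is essentially no obstacle here beyond verifying well-definedness of the descent, which is where one must be slightly careful: the parametrisation $A$ is only a bijection up to a null set, so one should note that the boundary of each $A((0,T_i]\times\scrD_i)$ has $\mu$-measure zero (this follows from the hypothesis that $\partial\scrD_i$ has Lebesgue measure zero together with smoothness of $A$ and the explicit form $d\mu=\overline\eta_1^{-1}dr\,d\nu$, cf. the argument behind Proposition \ref{lem-zero}), so that the exceptional null set does not affect any of the measures in question. With that caveat noted, the proposition is an immediate restatement of Proposition \ref{pointlem} combined with the definition of $\mu$ on $\scrX$ via $\scrF_\Gamma$.
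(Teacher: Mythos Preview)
Your proposal is correct and matches the paper's intent: the proposition is stated there without proof, being an immediate consequence of Proposition~\ref{pointlem} at $R=0$ together with the identification $\scrX\cong\scrF_\Gamma$. One small remark: the null-set caveat you add is unnecessary here, since the counting identity of Proposition~\ref{pointlem} holds pointwise for every $g_0$, not merely almost everywhere---the boundary hypotheses on the $\scrD_i$ are carried along only because they are needed in Theorem~\ref{cusp-main}, not for the present identity.
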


We will now explore the case when the test set $\scrD\subset\Sigma$ only depends on the second variable, i.e., the height of the section $\Sigma$.

\section{From hyperbolic lattice points to  Euclidean point sets}\label{secHyperbolic}
Let $\SL(2,\mathbb R)$ act on $\mathbb R^2$ on the right, in the standard fashion.   
The orbit $\scrP_\Gamma=(0,1) \Gamma$ of the point $(0,1)\in\RR^2$ under the  action of the lattice $\Gamma$ is a locally finite set.
 It is in one-to-one correspondence with the left coset $\Gamma_\infty\backslash\Gamma$ via the isomorphism
\begin{equation}\label{isom}
\Gamma_\infty \backslash\Gamma \to \scrP_\Gamma, \qquad \Gamma_\infty\gamma \mapsto (0,1)\gamma.
\end{equation}
This bijection extends to the deformed set $\scrP_\Gamma g$ via
\begin{equation}\label{isom2}
\Gamma_\infty \backslash\Gamma g \to \scrP_\Gamma g, \qquad \Gamma_\infty\gamma g \mapsto (0,1)\gamma g.
\end{equation}
 It is a result of Veech that for any $\scrA\subset\RR^2$ with boundary of Lebesgue measure zero, we have
\begin{equation}\label{density}
\lim_{T\to\infty} \frac{1}{T^2} \#(\{ \gamma\in\Gamma_\infty\backslash\Gamma: (0,1) \gamma g \cap T\scrA ) = \frac{2 \vol_{\RR^2}(\scrA)}{\pi \vol_\HH(\scrF_\Gamma)} ,
\end{equation}
where $\vol_{\RR^2}$ and $\vol_\HH$ are the standard Riemannian volumes of $\RR^2$ and $\HH$, respectively.
This observation, which was key in Veech's work \cite[Section 3]{Veech89}, follows from the theory of Eisenstein series of even integral weight \cite{Kubota}, which are the generating functions for the above counting problem. (The relevant pole is a simple pole at $s=1$ for the weight zero Eisenstein series, with residue $1/\vol_\HH(\scrF_\Gamma)$. The factor $2$ reflects the fact that we are both counting $\gamma$ and $-\gamma$, whereas the Eisenstein series takes only one representative of the two.)

We furthermore have the following well known Siegel-type identity. 
Let $\tilde\scrF_\Gamma$ be any measurable fundamental domain of $\Gamma$ in $\SL(2,\RR)$, and denote by $\mu$ the Haar measure of $\SL(2,\RR)$ normalised such that $\mu(\tilde\scrF_\Gamma)=1$; it represents the normalised Liouville measure on $\scrX$, hence the same notation.

\begin{prop}\label{propSiegel}
For every measurable function $f: \RR^2\to\RR_{\geq 0}$,
\begin{equation}\label{Siegel}
\int_{\tilde\scrF_\Gamma} \bigg( \sum_{p\in\scrP_\Gamma g} f(p) \bigg)d\mu(g) = \frac{2}{\pi \vol_\HH(\scrF_\Gamma)} \int_{\RR^2} f(a) da. 
\end{equation}
\end{prop}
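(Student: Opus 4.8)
The plan is to unfold the integral over the fundamental domain $\tilde\scrF_\Gamma$ using the coset decomposition $\Gamma = \bigsqcup_{\gamma\in\Gamma_\infty\backslash\Gamma}\Gamma_\infty\gamma$, exactly as in the classical Siegel formula for $\SL(2,\RR)$. Using the bijection \eqref{isom2} between $\Gamma_\infty\backslash\Gamma g$ and $\scrP_\Gamma g$, the left-hand side of \eqref{Siegel} becomes
\begin{equation}
\int_{\tilde\scrF_\Gamma}\sum_{\Gamma_\infty\gamma\in\Gamma_\infty\backslash\Gamma} f\big((0,1)\gamma g\big)\, d\mu(g).
\end{equation}
For each coset we may collapse the sum into a single integral: since $\tilde\scrF_\Gamma$ is a fundamental domain for $\Gamma$ and $\gamma$ ranges over coset representatives, the set $\bigcup_{\gamma}\gamma\tilde\scrF_\Gamma$ is (up to a null set) a fundamental domain $\tilde\scrF_{\Gamma_\infty}$ for the smaller group $\Gamma_\infty$ in $\SL(2,\RR)$, and the integrand $f((0,1)g)$ is left $\Gamma_\infty$-invariant because $(0,1)\begin{pmatrix}1&m\\0&1\end{pmatrix}=(0,1)$ for all $m\in\ZZ$. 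Hence by left-invariance of Haar measure the left-hand side equals $\int_{\tilde\scrF_{\Gamma_\infty}} f((0,1)g)\, d\mu(g)$.

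Next I would evaluate this last integral explicitly. A convenient fundamental domain for $\Gamma_\infty$ in $\SL(2,\RR)$ is $\tilde\scrF_{\Gamma_\infty}=\{g\in\SL(2,\RR):\text{(bottom-row-based normalization)}\}$; concretely, parametrise $g$ via the Iwasawa-type coordinates adapted to the bottom row, writing the bottom row of $g$ as $(c,d)=(0,1)g$ and the remaining degree of freedom as the upper-triangular unipotent on the left, which is exactly what is quotiented by $\Gamma_\infty$. One checks that in these coordinates Haar measure disintegrates so that $\int_{\tilde\scrF_{\Gamma_\infty}} f((0,1)g)\,d\mu(g) = c_\Gamma \int_{\RR^2} f(a)\,da$ for some constant $c_\Gamma$ depending only on the normalisation of $\mu$. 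The constant is then pinned down by the normalisation $\mu(\tilde\scrF_\Gamma)=1$ together with the covolume $\vol_\HH(\scrF_\Gamma)$ of the surface.

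The cleanest way to fix the constant $c_\Gamma$ without a hands-on Jacobian computation is to invoke the already-recorded asymptotic density \eqref{density}: applying the just-derived identity to $f=\chi_{T\scrA}$ and dividing by $T^2$, the left-hand side is $\frac{1}{T^2}\int_{\tilde\scrF_\Gamma}\#(\scrP_\Gamma g\cap T\scrA)\,d\mu(g)$, whose integrand converges (for $\mu$-a.e.\ $g$) to $\frac{2\vol_{\RR^2}(\scrA)}{\pi\vol_\HH(\scrF_\Gamma)}$ by \eqref{density}; provided one can justify passing the limit inside the integral, this forces $c_\Gamma=\frac{2}{\pi\vol_\HH(\scrF_\Gamma)}$, which is \eqref{Siegel}. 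One then extends from indicator functions of bounded sets with nice boundary to general measurable $f\ge 0$ by linearity and monotone convergence. I expect the main obstacle to be precisely this interchange of limit and integral (a uniform integrability or dominated-convergence argument is needed since \eqref{density} is only a pointwise, a.e.\ statement) — alternatively one sidesteps it entirely by doing the direct change-of-variables computation in the Iwasawa coordinates above, where the only real work is bookkeeping the normalisation factor against $\mu(\tilde\scrF_\Gamma)=1$.
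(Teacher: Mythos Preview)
Your unfolding step --- rewriting the sum via the bijection \eqref{isom2} and collapsing $\int_{\tilde\scrF_\Gamma}\sum_{\Gamma_\infty\backslash\Gamma}$ to $\int_{\tilde\scrF_{\Gamma_\infty}}$ --- is exactly what the paper does, and is the heart of the argument.

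The difference is in how the constant is identified. The paper takes what you call the ``alternative'': it writes $g$ in standard Iwasawa coordinates $(u,v,\theta)$ with $d\mu=\frac{1}{\pi\vol_\HH(\scrF_\Gamma)}\,\frac{du\,dv\,d\theta}{v^2}$, uses the fundamental domain $u\in[0,1)$, $v>0$, $\theta\in[0,2\pi)$ for $\Gamma_\infty$ (noting the $2\pi$ rather than $\pi$ since $-1\notin\Gamma_\infty$), and computes $\int f(v^{-1/2}(\cos\theta,\sin\theta))\,v^{-2}dv\,d\theta$ directly by the substitution $r=v^{-1/2}$ and passage to Cartesian coordinates. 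This is a two-line calculation and produces the factor $\frac{2}{\pi\vol_\HH(\scrF_\Gamma)}$ with no appeal to \eqref{density}.

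Your primary route --- deducing the constant from the pointwise asymptotic \eqref{density} --- is logically sound in principle but, as you correctly flag, requires justifying the interchange of limit and integral over $\tilde\scrF_\Gamma$. That is genuinely nontrivial here: the counting function $g\mapsto\#(\scrP_\Gamma g\cap T\scrA)$ is unbounded on $\tilde\scrF_\Gamma$ (it blows up in the cusp), so dominated convergence does not apply for free, and one would need a second-moment or tail bound to get uniform integrability. Since the direct Iwasawa computation sidesteps this entirely and is short, it is the cleaner choice; your proposal is correct once you commit to that branch.
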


\begin{proof}
We can assume without loss of generality that $f$ has compact support.  Then
\begin{equation}\label{lila}
\begin{split}
\int_{\tilde\scrF_\Gamma} \bigg( \sum_{p\in\scrP_\Gamma g} f(p) \bigg)d\mu(g) 
& = \int_{\tilde\scrF_\Gamma} \bigg( \sum_{\gamma\in\Gamma_\infty\backslash\Gamma} f((0,1)\gamma g) \bigg)d\mu(g) \\
& = \int_{\tilde\scrF_{\Gamma_\infty}}  f((0,1)g) d\mu(g) .
\end{split}
\end{equation}
We can  use the Iwasawa decomposition
\begin{equation}
g = \begin{pmatrix} 1 & u \\ 0 & 1 \end{pmatrix} \begin{pmatrix} v^{1/2} & 0 \\ 0 & v^{-1/2} \end{pmatrix} \begin{pmatrix} \cos\theta & -\sin\theta \\ \sin\theta & \cos\theta \end{pmatrix}.
\end{equation}
In these coordiates
\begin{equation}
d\mu(g) = \frac{1}{\pi \vol_\HH(\scrF_\Gamma)} \frac{du\,dv\,d\theta}{v^2} ,
\end{equation}
where the normalisation ensures $\mu$ is a probability measure on the fundamental domain $\tilde\scrF_\Gamma=\scrF_\Gamma\times[0,\pi)$ (it is $\pi$ not $2\pi$ since $-1\in\Gamma$).
A fundamental domain for $\Gamma_\infty\backslash\SL(2,\RR)$ is, on the other hand given by $u\in[0,1)$, $v>0$ and $\theta\in[0,2\pi)$  (now $2\pi$ because $\Gamma_\infty$ does not contain $-1$), and so \eqref{lila} evaluates to
\begin{equation}
\frac{1}{\pi \vol_\HH(\scrF_\Gamma)} \int_0^\infty\int_0^{2\pi}  f(v^{-1/2}(\cos\theta,\sin\theta)) \frac{dv\,d\theta}{v^2},
\end{equation}
which yields the right hand side of \eqref{Siegel} after the variable substitution $r=v^{-1/2}$ and going from polar to cartesian coordinates.
\end{proof}

If we apply Proposition \ref{propSiegel} with $f=\chi_\scrA$, the characteristic function of a bounded measurable set $\scrA\subset\RR^2$, then Markov's inequality yields the useful bound
\begin{equation}\label{Siegel2}
\mu\left\{  g \in \tilde\scrF_\Gamma : \#\left( \scrP_\Gamma g \cap \scrA \right) \geq 1 \right\} \leq \frac{2 \vol_{\RR^2}(\scrA)}{\pi \vol_\HH(\scrF_\Gamma)} ,
\end{equation}
which shows in particular that it is very unlikely to find an element of $\scrP_\Gamma g$ in a small set.

\begin{prop}\label{pointprop}
For $R\in\RR$ and $\scrB\subset\RR_{>0}\times\RR_{\geq 0}$, 
\begin{equation}\label{pointeq}
\#\left\{ j : \left(\e^{-R} \xi_j(x_0,R),t_j(x_0,R)\right) \in\scrB \right\}
= \# \left( \scrP_\Gamma g_0 \begin{pmatrix} \e^{-R/2} & 0 \\ 0 & \e^{R/2} \end{pmatrix} \cap \left\{ \e^{-t/2}(1,r) : (r,t)\in\scrB \right\} \right)  .
\end{equation}
\end{prop}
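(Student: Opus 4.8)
The statement is a specialization of Proposition~\ref{pointlem} to test sets of the form $\scrB\subset\RR_{>0}\times\RR_{\geq 0}$ that constrain only the pair $(r,t)$ (hitting time and height) and ignore the horizontal impact parameter $s\in[0,1)$. The plan is to start from Proposition~\ref{pointlem} applied with the "cylinder" set $\widetilde\scrB = \{(r,s,t): (r,t)\in\scrB,\ s\in[0,1)\}\subset\RR_{>0}\times\Sigma$, so that $\#\{j:(\e^{-R}\xi_j(x_0,R),t_j(x_0,R))\in\scrB\} = \#\{j:(\e^{-R}\xi_j(x_0,R),w_j(x_0,R))\in\widetilde\scrB\} = \#\bigl(\Gamma g_0 \left(\begin{smallmatrix}\e^{-R/2}&0\\0&\e^{R/2}\end{smallmatrix}\right)\cap A(\widetilde\scrB)\bigr)$, using $A$ from \eqref{AAAAA}.

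The core of the argument is then to identify the hyperbolic lattice-point count on the right with the Euclidean count in \eqref{pointeq} via the projection $g\mapsto (0,1)g$. First I would recall from \eqref{isom}--\eqref{isom2} that $g\mapsto(0,1)g$ sets up the bijection $\Gamma_\infty\backslash\Gamma g_0\,'\to\scrP_\Gamma g_0\,'$ for any $g_0\,'\in\SL(2,\RR)$ (here $g_0\,' = g_0\left(\begin{smallmatrix}\e^{-R/2}&0\\0&\e^{R/2}\end{smallmatrix}\right)$). Since the matrices $\left(\begin{smallmatrix}1&s\\0&1\end{smallmatrix}\right)$ with $s\in[0,1)$ together with the powers of $\left(\begin{smallmatrix}1&1\\0&1\end{smallmatrix}\right)\in\Gamma_\infty$ reconstruct all of $\Gamma_\infty$ — more precisely, $A(\widetilde\scrB)$ is exactly the $\Gamma_\infty$-saturation of $A(\scrB\times\{0\})$-type sets — the point is that an element of $\Gamma g_0\,'$ lands in $A(\widetilde\scrB)$ iff its $\Gamma_\infty$-coset lands in the quotient set, and the quotient is detected by the bottom row $(0,1)g$. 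Concretely, compute $(0,1)A(r,s,t)$: since $(0,1)\left(\begin{smallmatrix}1&s\\0&1\end{smallmatrix}\right) = (0,1)$, the $s$-factor disappears, and $(0,1)\left(\begin{smallmatrix}\e^{t/2}&0\\0&\e^{-t/2}\end{smallmatrix}\right)\left(\begin{smallmatrix}1&0\\r&1\end{smallmatrix}\right) = \e^{-t/2}(r,1) = \e^{-t/2}(1,r)$ up to swapping coordinates — this is precisely the Euclidean point set appearing in \eqref{pointeq}. Thus the bottom-row map carries the fibre over each $(r,t)$ (parametrized by $s\in[0,1)$, a single $\Gamma_\infty$-orbit's worth) to the single Euclidean point $\e^{-t/2}(1,r)$, so the count is preserved.

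Putting these together: $\#\bigl(\Gamma g_0\,'\cap A(\widetilde\scrB)\bigr) = \#\bigl(\Gamma_\infty\backslash\Gamma g_0\,' \cap (\text{image set})\bigr) = \#\bigl(\scrP_\Gamma g_0\,'\cap\{\e^{-t/2}(1,r):(r,t)\in\scrB\}\bigr)$, which is the right-hand side of \eqref{pointeq} with $g_0\,' = g_0\left(\begin{smallmatrix}\e^{-R/2}&0\\0&\e^{R/2}\end{smallmatrix}\right)$. The main thing to be careful about — the only real obstacle — is the bookkeeping of the $\Gamma_\infty$-quotient: one must check that distinct hits in $A(\widetilde\scrB)$ (distinct $j$) correspond to distinct points of $\scrP_\Gamma g_0\,'$, i.e. that the bottom-row map is injective on $\Gamma g_0\,'\cap A(\widetilde\scrB)$, and conversely that no Euclidean point is double-counted. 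Injectivity holds because two elements $\gamma_1 g_0\,', \gamma_2 g_0\,'$ of the section-preimage with the same bottom row differ by a left factor in $\Gamma_\infty$ (as $\left(\begin{smallmatrix}*&*\\c&d\end{smallmatrix}\right)$ with fixed $(c,d)$ form a left $\Gamma_\infty$-coset), hence $\gamma_1\gamma_2^{-1}\in\Gamma_\infty$, and within a single $\Gamma_\infty$-coset the intersection with $A(\widetilde\scrB)$ — whose $s$-range is the fundamental domain $[0,1)$ for $\Gamma_\infty$ acting on the $s$-line — contains exactly one element. This is essentially the content of the disjoint-union remark preceding Proposition~\ref{pointlem}, so the verification is short. (One may also simply quote \eqref{pio} and Proposition~\ref{pointlem} and perform the bottom-row computation, leaving the $\Gamma_\infty$-quotient identification to \eqref{isom2}.)
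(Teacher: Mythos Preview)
Your proposal is correct and follows essentially the same approach as the paper: apply Proposition~\ref{pointlem} to the cylinder set $\widetilde\scrB=\{(r,s,t):(r,t)\in\scrB,\ s\in[0,1)\}$, then pass from $\Gamma g_0'$ to $\scrP_\Gamma g_0'$ via the bijection \eqref{isom2} and compute the bottom row $(0,1)A(r,s,t)=\e^{-t/2}(r,1)$, noting that the $s$-factor is killed by $(0,1)\left(\begin{smallmatrix}1&s\\0&1\end{smallmatrix}\right)=(0,1)$. The paper's proof is exactly this, just compressed into two displayed lines; your extra paragraph on injectivity of the bottom-row map is simply making explicit what the paper absorbs into the phrase ``in view of the bijection \eqref{isom}''. (You correctly flagged the coordinate swap: the computation gives $\e^{-t/2}(r,1)$, which is what the paper uses downstream in Corollary~\ref{corsat}; the $(1,r)$ in the statement of \eqref{pointeq} appears to be a typo.)
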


\begin{proof}
In view of the bijection \eqref{isom} and Proposition \ref{pointlem},
\begin{equation}
\begin{split}
& \#\left\{ j : \left(\e^{-R} \xi_j(x_0,R),t_j(x_0,R)\right) \in\scrB \right\} \\
& = \# \left( \scrP_\Gamma g_0 \begin{pmatrix} \e^{-R/2} & 0 \\ 0 & \e^{R/2} \end{pmatrix} \cap  \left\{ (0,1) \begin{pmatrix} 1 & s \\  0 & 1 \end{pmatrix} \begin{pmatrix} \e^{t/2} & 0 \\ 0 & \e^{-t/2} \end{pmatrix} \begin{pmatrix} 1 & 0 \\ r & 1 \end{pmatrix} : (r,t)\in\scrB,\; s\in[0,1) \right\} \right) \\ 
& = \# \left( \scrP_\Gamma g_0 \begin{pmatrix} \e^{-R/2} & 0 \\ 0 & \e^{R/2} \end{pmatrix}\cap \left\{ (0,1) \begin{pmatrix} \e^{t/2} & 0 \\ 0 & \e^{-t/2} \end{pmatrix} \begin{pmatrix} 1 & 0 \\ r & 1 \end{pmatrix} : (r,t)\in\scrB \right\} \right),
\end{split}
\end{equation}
which yields \eqref{pointeq}.
\end{proof}

For $X> 0$, define the triangle $\Delta_X = \left\{ u \in\RR^2 : 0< u_1 \leq X u_2 ,\; 0< u_2 \leq 1 \right\}$.
Note that $\vol_{\RR^2}(\Delta_X)=\frac12 X$.

\begin{cor}\label{corsat}
For $X>0$,
\begin{equation}\label{pointeq002}
\#\left\{ j : \e^{-R} \xi_j(x_0,R)\leq X \right\}
=   \# \left( \scrP_\Gamma g_0 \begin{pmatrix} \e^{-R/2} & 0 \\ 0 & \e^{R/2} \end{pmatrix} \cap \Delta_X \right) .
\end{equation}
\end{cor}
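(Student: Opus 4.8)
The plan is to specialise Proposition~\ref{pointprop} to the set $\scrB = (0,X]\times\RR_{\geq 0}$, which simply drops the constraint involving $t$, and to identify the resulting Euclidean target region as the triangle $\Delta_X$. First I would apply Proposition~\ref{pointprop} with $\scrB = (0,X]\times\RR_{\geq 0}$. The left-hand side of \eqref{pointeq} then becomes $\#\{ j : \e^{-R}\xi_j(x_0,R)\leq X \}$, since the condition on $t_j$ is vacuous. On the right-hand side, the target set is $\{ \e^{-t/2}(1,r) : 0 < r \leq X,\; t\geq 0 \}$.

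The key step is to check that this target set is exactly $\Delta_X$. Writing $u = \e^{-t/2}(1,r)$, we have $u_2 = \e^{-t/2}$, so $u_2\in(0,1]$ as $t$ ranges over $\RR_{\geq 0}$, and $u_1 = \e^{-t/2} r = u_2 r$, so $u_1/u_2 = r\in(0,X]$; this is precisely the condition $0 < u_1\leq X u_2$ with $0<u_2\leq 1$ defining $\Delta_X$. Conversely, any $u\in\Delta_X$ is reached by setting $t = -2\log u_2 \geq 0$ and $r = u_1/u_2\in(0,X]$. Hence the two sets coincide, and \eqref{pointeq} becomes \eqref{pointeq002}.

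There is no real obstacle here: the statement is an immediate specialisation of Proposition~\ref{pointprop}, and the only thing to verify is the elementary change of variables identifying the parametrised ``scaled sector'' $\{\e^{-t/2}(1,r)\}$ with the triangle $\Delta_X$. The remark about $\vol_{\RR^2}(\Delta_X) = \tfrac12 X$ is then a trivial integral, $\int_0^1 \int_0^{Xu_2} du_1\, du_2 = \int_0^1 Xu_2\, du_2 = \tfrac12 X$, recorded for later use (e.g.\ in combination with \eqref{density} or \eqref{Siegel2}).
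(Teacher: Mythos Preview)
Your proposal is correct and follows exactly the paper's approach: specialise Proposition~\ref{pointprop} with $\scrB=(0,X]\times\RR_{\geq 0}$ and identify the resulting parametrised region $\{\e^{-t/2}(r,1):0<r\leq X,\,t\geq 0\}$ with the triangle $\Delta_X$. The paper's proof records this identification in one line, while you spell out the bijection in both directions; the content is identical.
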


\begin{proof}
In Proposition \ref{pointprop} take $\scrB=(0,X]\times \RR_{\geq 0}$. Then 
\begin{equation}
\left\{ (0,1) \begin{pmatrix} \e^{t/2} & 0 \\ 0 & \e^{-t/2} \end{pmatrix} \begin{pmatrix} 1 & 0 \\ r & 1 \end{pmatrix} : (r,t)\in\scrB \right\}
= \left\{ \e^{-t/2} (r,1) : 0<r\leq X,\; t\geq 0  \right\} .
\end{equation}
\end{proof}

The past hitting times $\xi_j$ with $j$ negative satisfy an analogue of \eqref{pointeq002} with the triangle $\Delta_{-X} = \{ u \in\RR^2 : -X u_2\leq u_1 <0  ,\; 0< u_2 \leq 1 \}$.
Again, $\vol_{\RR^2}(\Delta_{-X})=\frac12 X$.

\begin{cor}
For $r>0$, $(s,t)\in\Sigma$, 
\begin{equation}\label{rel50}
\Psi(r,s,t) = \frac{1}{\overline\eta_1} \; \one\left( \scrP_\Gamma  \begin{pmatrix} 1 & s \\  0 & 1 \end{pmatrix} \begin{pmatrix} \e^{t/2} & 0 \\ 0 & \e^{-t/2} \end{pmatrix}  \cap \Delta_{-r} = \emptyset \right) 
\end{equation}
with
\begin{equation}\label{etaformula}
\overline\eta_1  = \pi \vol_\HH(\scrF_\Gamma).
\end{equation}
\end{cor}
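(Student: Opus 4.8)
The plan is to obtain \eqref{rel50} by rewriting \eqref{Psirw} through the backward version of Corollary~\ref{corsat}, and to obtain \eqref{etaformula} by computing one first moment in two different ways --- once via the Siegel-type identity of Proposition~\ref{propSiegel}, and once via Kac's formula.

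For \eqref{rel50}, fix $w=(s,t)\in\Sigma$ and note that $\iota_0(w)\in\scrH_\kappa(0)$ has group representative $g_0=\begin{pmatrix}1&s\\0&1\end{pmatrix}\begin{pmatrix}\e^{t/2}&0\\0&\e^{-t/2}\end{pmatrix}$, in agreement with the presentation of the section in Section~\ref{secHitting}. Since $\eta_{-1}(w)$ is the first backward return time of the $h^+$-orbit of $\iota_0(w)$ to $\scrH_\kappa(0)$, for $r>0$ we have $r<\eta_{-1}(w)$ precisely when $\#\{j\le -1:|\xi_j(\iota_0(w),0)|\le r\}=0$; by the backward analogue of Corollary~\ref{corsat} noted above (taken at $R=0$, $x_0=\iota_0(w)$) this count equals $\#(\scrP_\Gamma g_0\cap\Delta_{-r})$. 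Hence $\one(r<\eta_{-1}(w))=\one(\scrP_\Gamma g_0\cap\Delta_{-r}=\emptyset)$, and \eqref{rel50} follows by substitution into \eqref{Psirw}.

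For \eqref{etaformula}, set $I(X)=\int_\scrX \#\{j\in\ZZ\setminus\{0\}:|\xi_j(x,0)|\le X\}\,d\mu(x)$ for $X>0$. On one hand, Corollary~\ref{corsat} together with its backward counterpart, both at $R=0$, gives for $x=\Gamma g$ the identity $\#\{j\in\ZZ\setminus\{0\}:|\xi_j(x,0)|\le X\}=\#\bigl(\scrP_\Gamma g\cap(\Delta_X\cup\Delta_{-X})\bigr)$, a disjoint union since $\Delta_X\subset\{u_1>0\}$ and $\Delta_{-X}\subset\{u_1<0\}$; integrating over a fundamental domain $\tilde\scrF_\Gamma$ and applying Proposition~\ref{propSiegel} with $f=\chi_{\Delta_X\cup\Delta_{-X}}$ (so $\int_{\RR^2}f=\vol_{\RR^2}(\Delta_X)+\vol_{\RR^2}(\Delta_{-X})=X$) gives $I(X)=\frac{2X}{\pi\vol_\HH(\scrF_\Gamma)}$. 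On the other hand, $\#\{j\ge 1:0<\xi_j(x,0)\le X\}$ is the number of crossings of the $h^+$-orbit of $x$ with $\scrH_\kappa(0)$ during $(0,X]$, and since $\mu$ is $h^+$-invariant and equals $\overline\eta_1^{-1}\,dr\,d\nu$ in flow-box coordinates with $\nu(\Sigma)=1$, Kac's formula yields $\int_\scrX\#\{j\ge1:\xi_j(x,0)\le X\}\,d\mu(x)=X/\overline\eta_1$, and likewise for the backward crossings, so $I(X)=2X/\overline\eta_1$. Comparing the two evaluations gives $\overline\eta_1=\pi\vol_\HH(\scrF_\Gamma)$. (As a consistency check, Theorem~\ref{cusp-first} applied with the $\varphi$-invariant, hence admissible, measure $\lambda=\mu$ shows $\Psi$ integrates to $1$, i.e.\ $\int_\Sigma\eta_{-1}\,d\nu=\overline\eta_1$.)

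The bookkeeping --- identifying $\iota_0(w)$ with the matrix above and checking the disjointness of $\Delta_X$ and $\Delta_{-X}$ --- is routine. The one step needing genuine care is the Kac-type identity $\int_\scrX\#\{j\ge1:\xi_j(x,0)\le X\}\,d\mu(x)=X/\overline\eta_1$: although classical, it is cleanest to derive it directly from the flow-box description of $\mu$, for instance by thickening $\scrH_\kappa(0)$ into a collar $\scrN_\epsilon=\{h^+_\rho(\iota_0(w)):w\in\Sigma,\ 0\le\rho<\epsilon\}$, which has $\mu(\scrN_\epsilon)=\epsilon/\overline\eta_1$, partitioning $(0,X]$ into $\lceil X/\epsilon\rceil$ intervals of length $\epsilon$, counting along this partition the intervals $((m-1)\epsilon,m\epsilon]$ for which $h^+_{m\epsilon}(x)\in\scrN_\epsilon$, using the $h^+$-invariance of $\mu$ to see that each such event has $\mu$-measure $\mu(\scrN_\epsilon)$, and letting $\epsilon\to0$ to absorb the $O(1)$ boundary error. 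This is the only serious obstacle; everything else is routine.
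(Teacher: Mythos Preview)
Your proof is correct and follows essentially the same route as the paper: \eqref{rel50} via the backward version of Corollary~\ref{corsat} applied at the section point $\iota_0(w)$, and \eqref{etaformula} by computing the expected number of hits using Proposition~\ref{propSiegel} together with Corollary~\ref{corsat} and identifying this with the intensity $1/\overline\eta_1$ of the stationary process. The only cosmetic difference is that you count two-sided crossings over $\Delta_X\cup\Delta_{-X}$ and obtain $2X$ on both sides, whereas the paper works one-sided with $\Delta_X$ alone; and you spell out the Kac identity via a collar argument, while the paper simply invokes the intensity interpretation already set up around \eqref{Kac}.
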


\begin{proof}
Relation \eqref{rel50} follows from the backward version of Corollary \ref{corsat}, since the first return time is the first hitting time for initial data on the Poincar\'e section. Proposition \ref{propSiegel} and Corollary \ref{corsat} imply that
\begin{equation}
\int_\scrX \#\left\{ j : \xi_j(x,0)\leq X \right\} d\mu(x) = \frac{2\vol_{\RR^2}(\Delta_X)}{\pi \vol_\HH(\scrF_\Gamma)}
= \frac{X}{\pi \vol_\HH(\scrF_\Gamma)} ,
\end{equation}
which means that the stationary point process $\left(\xi_j(x,0)\right)_{j\in\NN}$ (with $x\in\scrX$ random according to $\mu$) has intensity $(\pi \vol_\HH(\scrF_\Gamma))^{-1}$. 
\end{proof}

\section{Tail bounds}\label{secTail}

 In this section we will provide uniform upper and lower bounds for the density $\Psi(r)$ defined in equation \eqref{Psidef}, for all $r\geq 0$. We will assume there is only one cusp, i.e., $K=1$. The multiple cusp case is analogous and discussed in Section \ref{secMultiple}.
 
We first show that $\Psi(r)$ is constant for sufficiently small $r$. By construction, the return time of the horocycle flow to the section $\scrH_\kappa(0)$ has a strictly positive lower bound, i.e.,
\begin{equation}
\eta_1^* = \inf_{(s,t)\in\Sigma} \eta_{1}(s,t) = \inf_{(s,t)\in\Sigma} \eta_{- 1}(s,t) >0 .
\end{equation}
Hence, for $0<r<\eta_1^*$ we have,  in view of \eqref{Psirw},
\begin{equation}\label{Psirw789}
\Psi(r,s,t) = \frac{1}{\overline\eta_1} =  \frac{1}{\pi \vol_\HH(\scrF_\Gamma)}
\end{equation}
and therefore also
\begin{equation}\label{Psirw78900}
\Psi(r) =  \frac{1}{\pi \vol_\HH(\scrF_\Gamma)} .
\end{equation}

To understand the distribution of hitting times also for larger values of $r$, note that the formula for the half-s\'ejour time \eqref{sejour} yields the lower bound
\begin{equation}
\eta_{- 1}(s,t) \geq \left(\e^{t-\sigma_1} -1\right)^{1/2}
\end{equation}
for all $t\geq \sigma_1$, where $\sigma_1$ is chosen sufficiently large so that $\pi\scrH_\kappa(\sigma_1)$ is a proper cuspidal neighbourhood on the surface $\scrY$. Hence we have that 
\begin{equation}\label{Psirw789abc}
\Psi(r,s,t) \geq \frac{1}{\overline\eta_1}  \; \one\left(0\leq r< \left(\e^{t-\sigma_1} -1\right)^{1/2}\right) \; \one\left(t\geq \sigma_1\right),
\end{equation}
and therefore, by \eqref{Psidef},
\begin{equation}\label{Psirw789abcdef}
\Psi(r) \geq \frac{1}{\overline\eta_1}  \; \int_{\sigma_1}^\infty \one\left(0\leq r< (\e^{t-\sigma_1} -1)^{1/2}\right) \; \e^{-t} dt = \frac{1}{\overline\eta_1}  \; \frac{\e^{-\sigma_1}}{r^2 +1 }.
\end{equation}

Let us turn to the upper bound. We define $t'=t'(s,t)$ as the height of the pre-image under the return map on the section $\Sigma$, and $s'=s'(s,t)$ as the corresponding horizontal coordinate. Then $\eta_{-1}(s,t)=\eta_1(s',t')$ and, due to the invariance of $\nu$ under the return map, we have under the variable substitution $(s,t)\mapsto(s',t')$
\begin{equation}\label{Psidef098}
\Psi(r) = \int_\Sigma  \Psi(r,s,t)\,  d\nu(s,t) = \int_\Sigma  \Psi_+(r,s',t')\,  d\nu(s',t')
\end{equation}
where 
\begin{equation}\label{Psirw+}
\Psi_+(r,s',t') = \frac{1}{\overline\eta_1} \; \one\left(0 \leq r<\eta_{1}(s',t')\right).
\end{equation}

Now choose $\tilde\sigma_1$ sufficiently large so that 
$\pi\left(\scrH_\kappa(-\tilde\sigma_1)\right)=\scrY$. Then
\begin{equation}
\eta_{- 1}(s,t) \leq \left(\e^{t+\tilde\sigma_1} -1\right)^{1/2} + \left(\e^{t'+\tilde\sigma_1} -1\right)^{1/2}
\end{equation}
for all $(s,t)\in\Sigma$ and $(s',t')$ the image of $(s,t)$ under the first return map as above. We divide the integration in \eqref{Psidef098} into the regions $t'\leq t$ and $t\leq t'$, where
\begin{equation}\label{510}
\eta_{- 1}(s,t) \leq 2\left(\e^{t+\tilde\sigma_1} -1\right)^{1/2} \qquad (t'\leq t) ,
\end{equation}
\begin{equation}\label{511}
\eta_{1}(s',t') \leq 2 \left(\e^{t'+\tilde\sigma_1} -1\right)^{1/2} \qquad (t\leq t'),
\end{equation}
respectively. Hence, 
\begin{equation}\label{Psidef0987777}
\begin{split}
\Psi(r) & = \int_{t'<t}  \Psi(r,s,t)\,  d\nu(s,t) + \int_{t<t'}  \Psi(r,s,t)\,  d\nu(s,t) \\
& = \int_{t'<t}  \Psi(r,s,t)\,  d\nu(s,t) + \int_{t<t'}  \Psi_+(r,s',t')\,  d\nu(s',t') ,
\end{split}
\end{equation}
where in the second integration $t$ is viewed as a function of $(s',t')$.
Now, using \eqref{510}, we have for the first integral
\begin{equation}
\int_{t'<t}  \Psi(r,s,t)\,  d\nu(s,t)  \leq \frac{2}{\overline\eta_1}  \; \int_0^\infty \one\left(0\leq r< (\e^{t+\tilde\sigma_1} -1)^{1/2}\right) \; \e^{-t} dt \leq \frac{2}{\overline\eta_1}  \; \frac{\e^{\tilde\sigma_1}}{r^2 +1 } ,
\end{equation}
and the same for the second integral, using \eqref{Psirw+} and \eqref{511}. This yields
\begin{equation}
\Psi(r) \leq \frac{4}{\overline\eta_1}  \; \frac{\e^{\tilde\sigma_1}}{r^2 +1 } .
\end{equation}

Combining both upper and lower bounds, we conclude there exist constants $c_2>c_1>0$ such that for all $r\geq 0$ we have
\begin{equation}\label{5pt8}
\frac{c_1}{r^2+1} \leq \Psi(r) \leq \frac{c_2}{r^2+1}.
\end{equation}
In this general setting, the quadratic tail bound \eqref{5pt8} was first observed in \cite{Kumanduri24}.

\section{Example: the modular surface}\label{secExample}

In the case when $\Gamma$ is the modular group $\SL(2,\ZZ)$, the set $\scrP_\Gamma$ is the set of primitive lattice points $\widehat\ZZ^2=\{ (p,q) \in\ZZ^2 : \gcd(p,q)=1\}$, and the intersection of $\widehat\ZZ^2 g$ for random $x=\Gamma g$ has been studied in the context of statistics of directions of lattice points, Farey fractions and smallest denominators, see \cite{Artiles23,Athreya12,Marklof13,smalld1,smalld2} and references therein. The volume of the modular surface is $\pi/3$ and hence, by \eqref{etaformula}, we have 
$\overline\eta_1=\pi^2/3$. Furthermore note that \eqref{density} yields the well known value $6/\pi^2$ for the density of primitive lattice points.

Next, note that
\begin{equation}
\scrP_\Gamma  \begin{pmatrix} 1 & s \\  0 & 1 \end{pmatrix} \begin{pmatrix} \e^{t/2} & 0 \\ 0 & \e^{-t/2} \end{pmatrix}  \cap \Delta_{-\sigma} = \emptyset 
\end{equation}
is equivalent to 
\begin{equation}
\scrP_\Gamma  \begin{pmatrix} 1 & s \\  0 & 1 \end{pmatrix} \cap \Delta_{-\sigma,t} = \emptyset 
\end{equation}
with the triangle
$\Delta_{r,t} = \{ u \in\RR^2 : 0<  u_1 \leq r^{-1} \e^{t} u_2,\; u_2 \leq r \e^{-t/2} \}$.
A comparison with \cite[Eq.~(30)]{Marklof13} shows that
\begin{equation}
\Psi(r,t) : = \int_0^1 \Psi(r,s,t) ds = \frac{3}{\pi^2} \times
\begin{cases}
1 & \text{if $r \e^{-t/2} \leq 1$}\\
1-\e^{t/2} + r^{-1} \e^{t} & \text{if $1< r\e^{-t/2} \leq (1-\e^{-t/2})^{-1}$}\\
0 & \text{if $r\e^{-t/2} > (1-\e^{-t/2})^{-1}$.}\\
\end{cases}
\end{equation}
Integrating this against $\e^{-t}dt$ yields, via \eqref{Psidef},
\begin{equation}\label{Hall777}
\Psi(r) = \frac{3}{\pi^2} \times \begin{cases}
1 & \text{if $r\in[0,1)$}\\
-1+2r^{-1}+2r^{-1}\log r & \text{if $r\in[1,4]$}\\
-1+ 2r^{-1}+2\sqrt{\frac14-r^{-1}}-4r^{-1}\log\left(\frac12+\sqrt{\frac14-r^{-1}}\right) & \text{if $r\in[4,\infty]$,}
\end{cases}
\end{equation}
which was found by Hall \cite{Hall70} to describe the distribution of gaps in the Farey sequence. For $r\to\infty$ we have the asymptotics 
\begin{equation}
\Psi(r)\sim \frac{6}{\pi^2} r^{-2},
\end{equation}
which sharpens the general estimate \eqref{5pt8}.

We note that there are formulas analogous to the Hall distribution for more general Fuchsian groups $\Gamma$, which describe the statistics of slopes or gaps of saddle connections in flat surfaces \cite{Athreya15,Berman23,Kumanduri24,Sanchez,Taha19,Work20,Uyanik16}. By following the same steps as above, the formulas in these papers can be turned into explicit limit densities for the extreme value laws for cusp excursions on $\Gamma\backslash\HH$.

\section{Multiple-cusp sections}\label{secMultiple}
  
So far we have considered the hitting time statistics with respect to the section $\scrH_\kappa(R)$ in cusp $\kappa$. For given $\vecsigma\in\RR^K$ and $R\in\RR$, we will now consider the distribution of hitting times $(\xi_j)_{j\in\NN}$ with respect to the union of these sections
\begin{equation}
\scrH(\vecsigma, R) = \bigcup_{\kappa=1}^K \scrH_\kappa(\sigma_\kappa+R),
\end{equation}
which by construction is disjoint and transversal to the unstable horocycle flow; recall Section \ref{secSetting}. We set 
\begin{equation}
\Sigma^K(\vecsigma)=\left\{ (s,t,\kappa) : s \in [0,1),\; t\geq \sigma_\kappa,\; \kappa\in[K] \right\}
\end{equation}
with $[K]=\{1,\ldots,K\}$, and define the relevant embedding by
\begin{equation}\label{iota007}
\iota_{\vecsigma,R} : \Sigma^K(\vecsigma) \to \scrX, \qquad (s,t,\kappa) \mapsto \varphi_{t+R} \circ h_s^- (x_\kappa) 
\end{equation}
for some fixed but arbitrary choice of $x_\kappa\in\scrC_\kappa$, where $\scrC_\kappa$ is the stable horocycle of length one for cusp $\kappa$. 

For any given $\vecsigma=(\sigma_1,\ldots,\sigma_K)\in\RR^K$, The map $(r,s,t,\kappa) \mapsto h_r^+\circ \varphi_t \circ h_s^- (x_\kappa)$ provides a parametrisation of $\scrX$ (up to a set of $\mu$-measure zero), where $(s,t,\kappa)\in\Sigma^K(\vecsigma)$ and $0\leq r <\eta_1(s,t,\kappa,\vecsigma)$, the first return time to the section $\scrH(\vecsigma,0)=\iota_{\vecsigma,0}(\Sigma^K(\vecsigma))$. In these local coordinates, the Liouville measure reads as
\begin{equation}
\mu(dr,ds,dt,\{\kappa\}) = \frac{1}{\overline\eta_1(\vecsigma)} \; dr \, \nu_\vecsigma(ds,dt,\{\kappa\})
\end{equation}
with
\begin{equation}\label{Kac-multi}
\nu_\vecsigma(ds,dt,\{\kappa\}) = \e^{-t} \, dt\, ds 
\end{equation}
and
\begin{equation}\label{Kac-multi2}
\overline\eta_1(\vecsigma) = 
\int_{\Sigma^K(\vecsigma)} \eta_1(s,t,\kappa,\vecsigma)\, \nu(ds,dt,\{\kappa\})
=\sum_{\kappa=1}^K \int_0^1 \int_{\sigma_\kappa}^\infty \eta_1(s,t,\kappa,\vecsigma)\, \e^{-t}\,dt\,ds .
\end{equation}

For $j \in \mathbb N$, denote by $\xi_j=\xi_j(x_0,\vecsigma,R)$ the $j$th hitting time 
to the section $\scrH(\vecsigma,R)$ for the orbit of $x_0\in\scrX$ under the unstable horocycle flow $h^+$. We denote by $$(w_j,\kappa_j)=\left(w_j(x_0,\vecsigma,R),\kappa_j(x_0,\vecsigma,R)\right)\in\Sigma^K(\vecsigma)$$ the impact parameter of the $j$th hit in local coordinates, so that $\iota_{\vecsigma,R} (w_j,\kappa_j)$ is the location of the $j$th hit on $\scrH(\vecsigma,R)$. 

In this setting Theorem \ref{cusp-first} generalises to the following multi-cusp version. As before, we denote by $\eta_{-1}(w,\kappa,\vecsigma)$ the first backwards return time to the section $\scrH(\vecsigma,0)$. 

\begin{thm}\label{cusp-multi}
Assume $\lambda$ is admissible, and fix $\vecsigma\in\RR^K$. Then, for $X>0$ and $\scrD\subset\Sigma^K(\vecsigma)$ a Borel set with boundary of Lebesgue measure zero, 
\begin{multline}
\lim_{R \to \infty} \lambda\left\{ x_0\in\scrX :  \e^{-R}\xi_1( x_0,\vecsigma,R) >X, \; \left(w_1(x_0,\vecsigma,R),\kappa_1(x_0,\vecsigma,R)\right) \in\scrD  \right\} \\
=   \int_X^\infty \int_\scrD \Psi_\vecsigma(r,w,\kappa)\, \nu(dw,\{\kappa\}) \, dr 
\end{multline}
with $\L^1$ probability density
\begin{equation}
\Psi_\vecsigma(r,w,\kappa) = \frac{1}{\overline\eta_1(\vecsigma)} \; \one\left(0 \leq r<\eta_{-1}(w,\kappa,\vecsigma)\right) .
\end{equation}
\end{thm}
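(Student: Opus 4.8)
The plan is to mimic verbatim the proof of Theorem \ref{cusp-first}, since the multi-cusp section $\scrH(\vecsigma,R)$ enjoys exactly the same scaling behaviour under the geodesic flow as the single-cusp section. Indeed, $\varphi_t\scrH_\kappa(\sigma_\kappa+R)=\scrH_\kappa(\sigma_\kappa+R+t)$ for each $\kappa$, and since the union is disjoint this gives $\varphi_t\scrH(\vecsigma,R)=\scrH(\vecsigma,R+t)$. The key point is to record the analogue of the scaling relations \eqref{scaling}: for the union of sections we still have
\begin{equation}\label{scaling-multi}
\xi_j(x_0,\vecsigma,R)=\e^R\,\xi_j(\varphi_{-R}x_0,\vecsigma,0),\qquad (w_j,\kappa_j)(x_0,\vecsigma,R)=(w_j,\kappa_j)(\varphi_{-R}x_0,\vecsigma,0),
\end{equation}
because the hitting times are read off on the lifted horocycle $\{g_0\left(\begin{smallmatrix}1&0\\-r&1\end{smallmatrix}\right):r>0\}$ from its intersections with the $\Gamma$-orbit of the union, and translating by $\varphi_{-R}$ rescales $r\mapsto\e^{-R}r$ exactly as before, with the extra parameter $\sigma_\kappa$ being a fixed offset on the $t$-coordinate that plays no role in the rescaling.

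First I would apply \eqref{scaling-multi} to rewrite
\begin{equation*}
\lambda\{x_0\in\scrX:\e^{-R}\xi_1(x_0,\vecsigma,R)>X,\;(w_1,\kappa_1)(x_0,\vecsigma,R)\in\scrD\}=\lambda(\varphi_R\scrE),
\end{equation*}
where, by the same unfolding of the first-return map as in Theorem \ref{cusp-first},
\begin{equation*}
\scrE=\iota_{\vecsigma,0}\{(r,w,\kappa)\in\RR_{>0}\times\Sigma^K(\vecsigma):X<r\leq\eta_{-1}(w,\kappa,\vecsigma),\;(w,\kappa)\in\scrD\}.
\end{equation*}
Then I would use the admissibility of $\lambda$, i.e.\ \eqref{mixeq}, to conclude $\lambda(\varphi_R\scrE)\to\mu(\scrE)$, provided $\mu(\partial\scrE)=0$. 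Computing $\mu(\scrE)$ in the local coordinates $(r,w,\kappa)$, where $d\mu=\overline\eta_1(\vecsigma)^{-1}\,dr\,\nu_\vecsigma(dw,\{\kappa\})$ by \eqref{Kac-multi}, gives precisely $\int_X^\infty\int_\scrD\Psi_\vecsigma(r,w,\kappa)\,\nu(dw,\{\kappa\})\,dr$ with $\Psi_\vecsigma(r,w,\kappa)=\overline\eta_1(\vecsigma)^{-1}\one(r<\eta_{-1}(w,\kappa,\vecsigma))$; that $\Psi_\vecsigma\in\L^1$ and is a probability density follows from Kac's formula via the normalisation \eqref{Kac-multi2}, exactly as in Theorem \ref{cusp-first}.

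The only genuinely new ingredient is verifying $\mu(\partial\scrE)=0$. In the single-cusp case this was handled by Proposition \ref{lem-zero}, whose proof goes through the lattice-point description of Section \ref{secHitting} and a Siegel-type bound. For the multi-cusp section the same argument applies cusp by cusp: the boundary of $\scrE$ is contained in the union over $\kappa$ of the images $\iota_{\vecsigma,0}(\partial(\ldots))$, and since $\scrD$ has boundary of Lebesgue measure zero in $\Sigma^K(\vecsigma)$ while $r\mapsto\eta_{-1}(w,\kappa,\vecsigma)$ is measurable, the relevant set has $\nu_\vecsigma$-measure zero in each chart, hence $\mu$-measure zero. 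I expect this step — setting up the correct analogue of Proposition \ref{lem-zero} for the disjoint union of sections and checking that the piecewise-linear structure of the return-time function does not create a positive-measure boundary — to be the main (though still routine) obstacle; everything else is a direct transcription of the argument already given for Theorem \ref{cusp-first}.
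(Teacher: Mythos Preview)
Your proposal is correct and follows exactly the paper's approach: the paper's proof simply states that the argument is the same as for Theorem~\ref{cusp-first}, using rescaling and \eqref{mixeq} to obtain convergence to $\mu\{x\in\scrX:\xi_1(x,\vecsigma,0)>X,\;(w_1,\kappa_1)(x,\vecsigma,0)\in\scrD\}$, with the final formula following as before. You have in fact written out more detail than the paper provides, and your identification of the scaling relations \eqref{scaling-multi} and the $\mu(\partial\scrE)=0$ check as the ingredients is accurate.
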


\begin{proof}
The proof is the same as for Theorem \ref{cusp-first}, rescaling and \eqref{mixeq} shows that we have convergence to the limit 
\begin{equation}
\mu\left\{ x\in\scrX :  \xi_1( x,\vecsigma) >X, \; \left(w_1(x,\vecsigma,0),\kappa_1(x,\vecsigma,0)\right) \in\scrD  \right\} .
\end{equation}
The final formula then follows as before.
\end{proof}

Theorem \ref{cusp-multi} implies the following multi-cusp extension of Corollary \ref{cusp-first-cor0}. We now consider the $j$th entry time $\xi^\pi_j(x_0,,\vecsigma,R)$ to the projection of the total section
\begin{equation}
\overline\scrH(\vecsigma,R) = \bigcup_{\kappa=1}^K \overline\scrH_\kappa(\sigma_\kappa + R)
\end{equation}
to the surface $\scrY$, where $\overline\scrH(\vecsigma,R)= \pi\scrH(\vecsigma,R)$ and $\overline\scrH_\kappa(\sigma_\kappa)=\pi\scrH_\kappa(\sigma_\kappa)$.

Define the probability density
\begin{equation}\label{Psidef-multi}
\Psi_\vecsigma(r) = \int_{\Sigma^K(\vecsigma)}  \Psi_\vecsigma(r,w,\kappa)\,  \nu_\vecsigma(dw,\{\kappa\}) 
= \sum_{\kappa=1}^K \int_0^1 \int_{\sigma_\kappa}^\infty \Psi_\vecsigma(r,s,t,\kappa)\, \e^{-t}\,dt\,ds.
\end{equation}

\begin{cor}\label{cusp-first-cor0-multi}
Assume $\lambda$ is admissible.  Then, for $X>0$, 
\begin{equation}\label{psi1-multi}
\lim_{R \to \infty} \lambda\left\{ x_0\in\scrX :  \e^{-R}\xi_1\left( x_0,\vecsigma,R\right) >X \right\} =  \int_X^\infty  \Psi_\vecsigma(r)  dr 
\end{equation}
and
\begin{equation}\label{psi2-multi}
\lim_{R \to \infty} \lambda\left\{ x_0\in\scrX :  \e^{-R}\xi^\pi_1\left( x_0,\vecsigma,R\right) >X \right\} =  \int_X^\infty \Psi_\vecsigma(r)  dr .
\end{equation}
\end{cor}

\begin{proof}
The proof is analogous to that of Corollary \ref{cusp-first-cor0}.
\end{proof}

The multi-cusp version of Theorem \ref{cusp-main} is the following.  
  
\begin{thm}\label{cusp-main-multi}
Assume $\lambda$ is admissible. Then, for $N\in\NN$, $k_1,\ldots, k_N\in\ZZ_{\geq 0}$, $0<T_1\leq \ldots\leq T_N$, and $\scrD_1,\ldots,\scrD_N\subset\Sigma^K(\vecsigma)$ Borel sets with boundary of Lebesgue measure zero, 
\begin{multline}
\lim_{R\to 0} \lambda\left\{ x_0\in \scrX : \#\left\{ j : \e^{-R} \xi_j(x_0,\vecsigma,R)\leq T_j,\;\left(w_j(x_0,\vecsigma,R),\kappa_j(x_0,\vecsigma,R)\right) \in\scrD_i \right\} = k_i \forall i\leq N \right\} \\
=\mu\left\{ x\in \scrX : \#\left\{ j : \xi_j(x,\vecsigma,0)\leq T_j ,\; \left(w_j(x,\vecsigma,0),\kappa_j(x,\vecsigma,0)\right) \in\scrD_i \right\} = k_i \forall i\leq N \right\}.
\end{multline}
\end{thm}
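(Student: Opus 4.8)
The plan is to reduce Theorem~\ref{cusp-main-multi} to the case already handled, namely Theorem~\ref{cusp-main}, by noting that the multi-cusp setting differs from the single-cusp setting only in that the Poincar\'e section is the disjoint union $\scrH(\vecsigma,R)=\bigcup_{\kappa=1}^K\scrH_\kappa(\sigma_\kappa+R)$ and the parameter space is $\Sigma^K(\vecsigma)$ rather than $\Sigma$. Crucially, the same scaling property \eqref{scaling} holds verbatim: since each component section scales as $\varphi_t\scrH_\kappa(R')=\scrH_\kappa(R'+t)$, we have $\varphi_t\scrH(\vecsigma,R)=\scrH(\vecsigma,R+t)$, and hence
\begin{equation}
\xi_j(x_0,\vecsigma,R) = \e^R\,\xi_j(\varphi_{-R}x_0,\vecsigma,0), \qquad
(w_j(x_0,\vecsigma,R),\kappa_j(x_0,\vecsigma,R)) = (w_j(\varphi_{-R}x_0,\vecsigma,0),\kappa_j(\varphi_{-R}x_0,\vecsigma,0)).
\end{equation}
(The cusp label $\kappa_j$ is scale-invariant because the geodesic flow permutes the component sections $\scrH_\kappa(\cdot)$ among themselves according to their index, without mixing indices.)

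With this in hand, the argument is a line-by-line repetition of the proof of Theorem~\ref{cusp-main}. First I would set
\begin{equation}
\scrE = \left\{ x\in\scrX : \#\{ j : (\xi_j(x,\vecsigma,0),w_j(x,\vecsigma,0),\kappa_j(x,\vecsigma,0)) \in(0,T_j]\times\scrD_i \} = k_i \ \forall i\leq N \right\},
\end{equation}
and then use the scaling property above to rewrite, for each $R$,
\begin{equation}
\lambda\left\{ x_0\in\scrX : \#\{ j : (\e^{-R}\xi_j(x_0,\vecsigma,R),w_j,\kappa_j) \in(0,T_j]\times\scrD_i \} = k_i \ \forall i\leq N \right\}
= \lambda(\varphi_R\scrE).
\end{equation}
Admissibility of $\lambda$, i.e.\ \eqref{mixeq}, then gives $\lambda(\varphi_R\scrE)\to\mu(\scrE)$ as $R\to\infty$, provided $\mu(\partial\scrE)=0$. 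This is exactly the claimed limit, so the whole statement follows once the boundary condition is verified.

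The one point that needs more than a literal copy is the boundary estimate $\mu(\partial\scrE)=0$: the reference to Proposition~\ref{lem-zero} in the single-cusp proof must be upgraded to the multi-cusp section. This is routine but is the only genuine step. Proposition~\ref{lem-zero} expresses the hitting set in terms of hyperbolic lattice points via the map $A$, and its proof uses only the Siegel-type bound \eqref{Siegel2}, which counts $\scrP_\Gamma g$-points in small sets; for the union $\scrH(\vecsigma,R)$ one simply applies the same counting bound in each of the $K$ cusp coordinate charts (after conjugating each cusp to $\infty$, as in Section~\ref{secHitting}) and sums the $K$ contributions. Since each $\scrD_i$ has boundary of Lebesgue measure zero in $\Sigma^K(\vecsigma)=\bigsqcup_{\kappa}[0,1)\times[\sigma_\kappa,\infty)$, the set $A((0,T_i]\times\scrD_i)$ has $\mu$-measure-zero boundary in each chart, and the finite union $\scrE$ (together with its complement, obtained from counting the exact level sets $k_i$) inherits $\mu(\partial\scrE)=0$. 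I expect this bookkeeping over the $K$ cusps to be the main — and only — obstacle, and it is a mild one; everything else is the scaling-plus-mixing template already established.
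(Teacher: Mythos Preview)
Your proposal is correct and follows exactly the approach the paper intends: the paper does not even write out a separate proof for Theorem~\ref{cusp-main-multi}, simply presenting it as the multi-cusp version of Theorem~\ref{cusp-main}, with the understanding that the scaling-plus-admissibility argument carries over verbatim and the boundary estimate is handled by the multi-cusp lattice-point machinery (Proposition~\ref{pointlem-multi} in place of Proposition~\ref{pointlem}). Your identification of the boundary condition $\mu(\partial\scrE)=0$ as the only point requiring the multi-cusp upgrade, and your sketch of how to obtain it by summing over the $K$ cusp charts, matches precisely what the paper sets up in Section~\ref{secMultiple}.
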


We now explain how the above setting can be translated to a hyperbolic lattice point problem, in  analogy with Section \ref{secHitting}.
Choose $g_\kappa\in\SL(2,\RR)$ so that it maps the cusp $\kappa$ to the cusp at infinity of width one. This is equivalent to saying that the stabilizer $\Gamma_\kappa$ of $\kappa$ is 
\begin{equation}\label{stub}
\Gamma_\kappa=g_\kappa^{-1}  \Gamma_\infty g_\kappa
= g_\kappa^{-1} \left\{\begin{pmatrix} 1 & m \\ 0 & 1 \end{pmatrix} : m\in\ZZ \right\} g_\kappa,
\end{equation}
and the section in the cusp $\kappa$ is represented as
\begin{equation}
\scrH_\kappa(\sigma_\kappa) = \Gamma\backslash \left\{ \Gamma g_\kappa^{-1} \begin{pmatrix} 1 & s \\  0 & 1 \end{pmatrix} \begin{pmatrix} \e^{t/2} & 0 \\ 0 & \e^{-t/2} \end{pmatrix} : s\in[0,1), t\geq \sigma_\kappa \right\} .
\end{equation}
The same steps leading to Proposition \ref{pointlem} give us the following.
For $A$ as in \eqref{AAAAA} we define
$A^*(r,s,t,\kappa) = (A(r,s,t),\kappa)$.

\begin{prop}\label{pointlem-multi}
For $\vecsigma\in\RR^K$, $R\in\RR$ and $\scrB\subset\RR_{>0}\times\Sigma^K(\vecsigma)$,
\begin{multline}
\#\left\{ j : \left(\e^{-R} \xi_j\left(x_0,\vecsigma,R\right),w_j\left(x_0,\vecsigma,R\right),\kappa_j\left(x_0,\vecsigma,R\right)\right) \in\scrB \right\} \\
= \# \left\{ (\gamma,\kappa)\in\Gamma\times[K] : \left(  g_\kappa \gamma g_0  \begin{pmatrix} \e^{-R/2} & 0 \\ 0 & \e^{R/2} \end{pmatrix},\kappa\right) \in A^*(\scrB)  \right\} .
\end{multline}
\end{prop}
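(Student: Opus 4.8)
The plan is to reduce the multi-cusp counting problem to a disjoint union of $K$ single-cusp problems, each of which is handled exactly as in the proof of Proposition \ref{pointlem}. First I would fix $\vecsigma\in\RR^K$ and $R\in\RR$, and recall that by construction the section $\scrH(\vecsigma,R)=\bigcup_{\kappa=1}^K\scrH_\kappa(\sigma_\kappa+R)$ is a \emph{disjoint} union, so that the set $\{j:(\e^{-R}\xi_j,w_j,\kappa_j)\in\scrB\}$ partitions according to the value of $\kappa_j\in[K]$; it therefore suffices to count, for each fixed $\kappa$, the hits that land in cusp $\kappa$ with parameters in $\scrB\cap(\RR_{>0}\times\Sigma^K(\vecsigma))$ restricted to that cusp. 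This is exactly the single-cusp situation of Section \ref{secHitting}, except that the cusp $\kappa$ is now brought to the standard cusp at infinity of width one by the conjugating element $g_\kappa$, with stabilizer relation \eqref{stub}.

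Next, for each fixed $\kappa$ I would repeat verbatim the chain of identities leading to \eqref{pio} and Proposition \ref{pointlem}, but with $\Gamma$ replaced by $g_\kappa\Gamma g_\kappa^{-1}$ in the obvious way: the section $\scrH_\kappa(\sigma_\kappa+R)$ lifts in $\SL(2,\RR)$ to the $\Gamma$-orbit of $\{g_\kappa^{-1}\left(\begin{smallmatrix}1&s\\0&1\end{smallmatrix}\right)\left(\begin{smallmatrix}\e^{t/2}&0\\0&\e^{-t/2}\end{smallmatrix}\right): s\in[0,1),\,t\geq \sigma_\kappa+R\}$, and the lifted unstable horocycle through $g_0$ meets this orbit at a point indexed by some $\gamma\in\Gamma$ precisely when $\gamma g_0\left(\begin{smallmatrix}1&0\\-\xi_j&1\end{smallmatrix}\right)^{-1}$, after left-multiplication by $g_\kappa$, lies in the standard horospherical chart. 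Carrying the factor $\left(\begin{smallmatrix}\e^{-R/2}&0\\0&\e^{R/2}\end{smallmatrix}\right)$ across the Iwasawa factors exactly as in the proof of Proposition \ref{pointlem} (using $\e^{-R}\xi_j$ in place of $\xi_j$ and commuting the diagonal matrix through $A(r,s,t)$), the condition becomes $g_\kappa\gamma g_0\left(\begin{smallmatrix}\e^{-R/2}&0\\0&\e^{R/2}\end{smallmatrix}\right)\in A(\scrB_\kappa)$, where $\scrB_\kappa$ is the slice of $\scrB$ over cusp $\kappa$. Bundling the label $\kappa$ back in via $A^*(r,s,t,\kappa)=(A(r,s,t),\kappa)$ and taking the union over $\kappa\in[K]$ then yields the stated identity, with the pair $(\gamma,\kappa)$ ranging over $\Gamma\times[K]$.

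The one point requiring a little care — and the only real obstacle — is checking that the correspondence $j\leftrightarrow(\gamma,\kappa)$ is a genuine bijection, i.e. that no hit is counted twice and none is missed. Missing nothing is the content of ergodicity of the horocycle flow together with transversality of each $\scrH_\kappa$ (so that $\mu$-a.e.\ orbit meets the section in a discrete set indexed by $\NN$); not double-counting within a fixed cusp is the disjointness of the union $\bigcup_{\gamma\in\Gamma}\gamma g_\kappa^{-1}\{\cdots\}$, which holds because $\Gamma_\infty$ is exactly the stabilizer of the width-one cusp chart, so distinct cosets $\Gamma_\infty\gamma$ give disjoint plaques; and not double-counting \emph{across} cusps is the relation $\scrH_\kappa\cap\scrH_\ell=\emptyset$ for $\kappa\neq\ell$ noted in Section \ref{secSetting}. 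Once these three facts are invoked, the rest is the same bookkeeping as in Proposition \ref{pointlem}, so I would state it briefly rather than reproduce the computation in full.
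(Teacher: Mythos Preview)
Your proposal is correct and follows precisely the approach the paper intends: the paper's proof of Proposition \ref{pointlem-multi} is the single sentence ``The same steps leading to Proposition \ref{pointlem} give us the following,'' and you have spelled out exactly those steps, namely splitting the count over the disjoint cusps, conjugating cusp $\kappa$ to the standard cusp at infinity via $g_\kappa$, and then repeating the Iwasawa-coordinate computation from the proof of Proposition \ref{pointlem}. One small notational slip: the element that must lie in the standard horospherical chart is $g_\kappa\gamma g_0\left(\begin{smallmatrix}1&0\\-\xi_j&1\end{smallmatrix}\right)$, not $g_\kappa\gamma g_0\left(\begin{smallmatrix}1&0\\-\xi_j&1\end{smallmatrix}\right)^{-1}$; equivalently, it is $g_\kappa\gamma g_0$ that lies in $A(\scrB)$ after the $R$-rescaling, exactly as in \eqref{pio}.
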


This proposition then implies the following multi-cusp generalisation of Proposition \ref{cusp-main-prop}, which provides a formula for the limit distribution in Theorem \ref{cusp-main} in terms of hyperbolic lattice points. 

\begin{prop}\label{cusp-main-prop-multi}
Let $\vecsigma\in\RR^K$, $N\in\NN$, $k_1,\ldots, k_N\in\ZZ_{\geq 0}$, $0<T_1\leq \ldots\leq T_N$, and $\scrD_1,\ldots,\scrD_N\subset\Sigma^K(\vecsigma)$ Borel sets with boundary of Lebesgue measure zero. Then
\begin{multline}
\mu\left\{ x\in \scrX : \#\left\{ j : 
\left(\xi_j(x,\vecsigma,0),w_j(x,\vecsigma,0),\kappa_j(x,\vecsigma,0)\right) \in(0,T_i]\times\scrD_i \right\} = k_i \forall i\leq N \right\} \\
=\mu\left\{ g\in\scrF_\Gamma : \#\left\{ \left(\gamma,\kappa\right)\in\Gamma\times[K] : \left( g_\kappa \gamma g , \kappa\right) \in A^*\left((0,T_i]\times\scrD_i\right) \right\} = k_i \forall i\leq N \right\} .
\end{multline}
\end{prop}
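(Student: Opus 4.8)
The plan is to derive Proposition~\ref{cusp-main-prop-multi} by combining the multi-cusp limit theorem (Theorem~\ref{cusp-main-multi}) with the lattice-point reformulation in Proposition~\ref{pointlem-multi}, following verbatim the single-cusp argument that leads from Theorem~\ref{cusp-main} and Proposition~\ref{pointlem} to Proposition~\ref{cusp-main-prop}. First I would apply Proposition~\ref{pointlem-multi} with $R=0$ and the Borel set $\scrB=(0,T_i]\times\scrD_i$: since at $R=0$ the conjugating matrix $\operatorname{diag}(\e^{-R/2},\e^{R/2})$ is the identity, this rewrites, for each fixed representative $g_0$ with $x_0=\Gamma g_0$, the counting function $\#\{j:(\xi_j(x_0,\vecsigma,0),w_j(x_0,\vecsigma,0),\kappa_j(x_0,\vecsigma,0))\in(0,T_i]\times\scrD_i\}$ as $\#\{(\gamma,\kappa)\in\Gamma\times[K]:(g_\kappa\gamma g_0,\kappa)\in A^*((0,T_i]\times\scrD_i)\}$, exactly the expression appearing on the right-hand side of the claimed identity.

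Next I would check that this per-point identity descends to an identity of $\mu$-measures on $\scrX\cong\Gamma\backslash\SL(2,\RR)$. Since $x=\Gamma g$ ranges over $\scrX$, we parametrise $x$ by $g\in\scrF_\Gamma$, a fundamental domain for $\Gamma$, and $\mu$ on $\scrX$ corresponds to the normalised Haar measure on $\scrF_\Gamma$ (as set up at the start of Section~\ref{secHitting}). I need to confirm that the counting function $g\mapsto\#\{(\gamma,\kappa):(g_\kappa\gamma g,\kappa)\in A^*((0,T_i]\times\scrD_i)\}$ is $\Gamma$-left-invariant, which is immediate because replacing $g$ by $\gamma' g$ with $\gamma'\in\Gamma$ merely reindexes the sum over $\gamma$ by $\gamma\mapsto\gamma(\gamma')^{-1}$; hence it is a well-defined function on $\scrX$, and the event $\{\cdot=k_i\,\forall i\}$ is a Borel subset whose $\mu$-measure is exactly the right-hand side. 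Equating with the left-hand side via the point identity of the previous paragraph finishes the proof.

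The only substantive point to address is measurability and finiteness: one must know that for Lebesgue-almost every $g$ (equivalently $\mu$-almost every $x$) the sets $A^*((0,T_i]\times\scrD_i)$ contain only finitely many lattice points $g_\kappa\gamma g$, so that the counting functions are finite and the events are genuinely Borel. This is exactly the content underlying Theorem~\ref{cusp-main-multi} (and in the single-cusp case is handled by Propositions~\ref{lem-zero} and~\ref{pointlem}, via the Siegel-type/Markov bound showing small sets rarely catch lattice points); the same bound, applied cusp by cusp and summed over the finitely many $\kappa\in[K]$, gives local finiteness here. I expect this to be the main obstacle only in the bookkeeping sense — there is no new idea, but one should be careful that the union over the $K$ cusps is handled uniformly, for which the disjointness and transversality of the sections $\scrH_\kappa(\sigma_\kappa+R)$ noted in Section~\ref{secMultiple} is what is needed.

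\begin{proof}
By Proposition~\ref{pointlem-multi} applied with $R=0$ and $\scrB=(0,T_i]\times\scrD_i$, for any $g_0\in\SL(2,\RR)$ with $x=\Gamma g_0$ we have, for each $i\leq N$,
\begin{equation}
\#\{ j : (\xi_j(x,\vecsigma,0),w_j(x,\vecsigma,0),\kappa_j(x,\vecsigma,0)) \in(0,T_i]\times\scrD_i \}
= \#\{ (\gamma,\kappa)\in\Gamma\times[K] : ( g_\kappa \gamma g_0 , \kappa) \in A^*((0,T_i]\times\scrD_i) \}.
\end{equation}
The right-hand side is unchanged if $g_0$ is replaced by $\gamma' g_0$ for $\gamma'\in\Gamma$, since the substitution $\gamma\mapsto\gamma(\gamma')^{-1}$ is a bijection of $\Gamma$; hence it is a well-defined function of $x\in\scrX$, and we may parametrise $x=\Gamma g$ by $g\in\scrF_\Gamma$, under which $\mu$ on $\scrX$ corresponds to the normalised Haar measure on $\scrF_\Gamma$. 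Taking $\mu$-measure of the event that the above count equals $k_i$ for all $i\leq N$ — which is a Borel set, the counting functions being finite for $\mu$-almost every $x$ by the same Markov bound as in Proposition~\ref{lem-zero}, applied to each of the finitely many cusps $\kappa\in[K]$ and summed — yields the asserted identity.
\end{proof}
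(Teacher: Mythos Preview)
Your proof is correct and matches the paper's approach: the paper simply states that Proposition~\ref{cusp-main-prop-multi} is implied by Proposition~\ref{pointlem-multi}, and your argument supplies exactly those details --- specialise $R=0$, observe the left $\Gamma$-invariance of the lattice count, and pass to $\mu$-measure on $\scrF_\Gamma$. One small remark: your opening plan mentions Theorem~\ref{cusp-main-multi}, but neither you nor the paper actually uses it here, since the proposition is a pointwise reformulation rather than a limit statement; your formal proof correctly omits it.
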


Let us now extend the discussion of Section \ref{secHyperbolic} on the distribution of the height of the hits in the section $\Sigma$, ignoring the horizontal parameter $s\in[0,1)$, to the multi-cusp setting. It is convenient to set
\begin{equation}
\overline\Sigma^K(\vecsigma)=\left\{ (t,\kappa) :  t\geq \sigma_\kappa,\; \kappa\in[K] \right\} .
\end{equation}

For each cusp $\kappa$, the analogue of the isomorphism \eqref{isom} is
\begin{equation}\label{isom-multi}
\Gamma_\kappa \backslash \Gamma \to \scrP_\Gamma^\kappa, \qquad \Gamma_\kappa  \gamma \mapsto (0,1)g_\kappa \gamma,
\end{equation}
where we note that, in view of \eqref{stub}, we have $(0,1)g_\kappa \gamma=(0,1) g_\kappa$ if and only if $\gamma\in\Gamma_\kappa$.
We extend the bijection to the deformed set $\scrP_\Gamma^\kappa g$ via
\begin{equation}\label{isom2-multi}
\Gamma_\kappa \backslash \Gamma g \to \scrP_\Gamma^\kappa g, \qquad \Gamma_\kappa  \gamma g \mapsto (0,1)g_\kappa \gamma g.
\end{equation}
Relation \eqref{density} then implies
\begin{equation}\label{density-multi}
\lim_{T\to\infty} \frac{1}{T^2} \#(\{ \gamma\in\Gamma_\kappa\backslash\Gamma: (0,1) g_\kappa \gamma g \in T\scrA ) = \frac{2 \vol_{\RR^2}(\scrA)}{\pi \vol_\HH(\scrF_\Gamma)} .
\end{equation}
since
$$
\#(\{ \gamma\in\Gamma_\kappa\backslash\Gamma: (0,1) g_\kappa \gamma g \in T\scrA )
=
\#(\{ \gamma\in\Gamma_\infty\backslash g_\kappa \Gamma g_\kappa^{-1} : (0,1) \gamma  \in T\scrA\; (g_\kappa g)^{-1})
$$
and $\vol_\HH(\scrF_{g_\kappa \Gamma g_\kappa^{-1}})=\vol_\HH(\scrF_\Gamma)$ (because $g_\kappa$ acts on $\HH$ by isometries) and $\vol_{\RR^2}(\scrA (g_\kappa g)^{-1})=\vol_{\RR^2}(\scrA)$ (because $g_\kappa g$ has determinant one).

Proposition \ref{propSiegel} can be restated as follows.

\begin{prop}\label{propSiegel-multi}
For every measurable function $f:\RR^2\to\RR_{\geq 0}$,
\begin{equation}\label{Siegel-multi}
\int_{\tilde\scrF_\Gamma} \bigg( \sum_{p\in\scrP^\kappa_\Gamma g} f(p) \bigg)d\mu(g) = \frac{2}{\pi \vol_\HH(\scrF_\Gamma)} \int_{\RR^2} f(a) da. 
\end{equation}
\end{prop}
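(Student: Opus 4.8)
The plan is to reduce Proposition \ref{propSiegel-multi} to the already-established Proposition \ref{propSiegel} by the same change-of-variables trick used to derive \eqref{density-multi} from \eqref{density}. The point is that $\scrP^\kappa_\Gamma g = (0,1) g_\kappa \Gamma g$, and unwinding the sum over the coset space $\Gamma_\kappa\backslash\Gamma$ using the isomorphism \eqref{isom2-multi} gives exactly the quantity $\scrP_{\Gamma'}(g_\kappa g)$ for the conjugated group $\Gamma' = g_\kappa\Gamma g_\kappa^{-1}$, since $(0,1) g_\kappa\gamma g = (0,1)(g_\kappa\gamma g_\kappa^{-1})(g_\kappa g)$ and $\Gamma_\kappa = g_\kappa^{-1}\Gamma_\infty g_\kappa$ means $\Gamma_\infty\backslash\Gamma' \cong \Gamma_\kappa\backslash\Gamma$.

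Concretely, first I would write
\begin{equation}
\int_{\tilde\scrF_\Gamma} \bigg( \sum_{p\in\scrP^\kappa_\Gamma g} f(p) \bigg) d\mu(g)
= \int_{\tilde\scrF_\Gamma} \bigg( \sum_{\gamma\in\Gamma_\kappa\backslash\Gamma} f((0,1)g_\kappa\gamma g) \bigg) d\mu(g),
\end{equation}
then substitute $h = g_\kappa g$. Since $g_\kappa\in\SL(2,\RR)$, left translation by $g_\kappa$ preserves the Haar measure $\mu$ on $\SL(2,\RR)$, and it carries a fundamental domain $\tilde\scrF_\Gamma$ of $\Gamma$ to a fundamental domain $g_\kappa\tilde\scrF_\Gamma$ of $\Gamma' = g_\kappa\Gamma g_\kappa^{-1}$. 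Writing $\gamma' = g_\kappa\gamma g_\kappa^{-1}$, the sum becomes $\sum_{\gamma'\in\Gamma_\infty\backslash\Gamma'} f((0,1)\gamma' h)$, i.e. $\sum_{p\in\scrP_{\Gamma'} h} f(p)$. Thus the integral equals $\int_{g_\kappa\tilde\scrF_\Gamma}\big(\sum_{p\in\scrP_{\Gamma'}h}f(p)\big)d\mu(h)$, which is precisely the left-hand side of \eqref{Siegel} applied to the group $\Gamma'$ in place of $\Gamma$.

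Applying Proposition \ref{propSiegel} to $\Gamma'$ then gives $\frac{2}{\pi\vol_\HH(\scrF_{\Gamma'})}\int_{\RR^2}f(a)\,da$, and since $g_\kappa$ acts on $\HH$ by an isometry we have $\vol_\HH(\scrF_{\Gamma'}) = \vol_\HH(\scrF_\Gamma)$, giving \eqref{Siegel-multi}. The only mild subtlety — and the one step worth stating carefully — is the normalisation of $\mu$: Proposition \ref{propSiegel} is stated with $\mu$ normalised so that $\mu(\tilde\scrF_\Gamma)=1$, and one must check this normalisation is compatible with the normalisation used for $\Gamma'$, which it is because $\mu(g_\kappa\tilde\scrF_\Gamma)=\mu(\tilde\scrF_\Gamma)=1$ by left-invariance. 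Apart from bookkeeping around which fundamental domain and which conjugate group one is working with, there is no real obstacle: the proposition is a formal consequence of Proposition \ref{propSiegel} together with the conjugation-invariance of Haar measure and of hyperbolic volume, exactly parallel to the derivation of \eqref{density-multi}.
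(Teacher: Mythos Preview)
Your proof is correct and follows essentially the same approach as the paper: reduce to Proposition \ref{propSiegel} by conjugating so that the cusp $\kappa$ is moved to infinity. The paper's own proof is the one-line observation that the earlier ``without loss of generality $\kappa=\infty$'' assumption makes the statement identical to Proposition \ref{propSiegel}, with a parenthetical remark that one can also repeat the argument directly using invariance of Haar measure---which is exactly what you spell out carefully via the substitution $h=g_\kappa g$ and the conjugate group $\Gamma'=g_\kappa\Gamma g_\kappa^{-1}$.
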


\begin{proof} 
In Section \ref{secHyperbolic} we assumed without loss of generality that $\kappa=\infty$, so the statement is in fact the same as Proposition \ref{propSiegel}. (It is nevertheless instructive to repeat the proof in this setting, using the unimodularity of Haar measure $\mu$.)
\end{proof}

For $\scrB\subset\RR_{>0}\times\overline\Sigma^K(\vecsigma)$ put
$\scrB^\kappa = \left\{ (r,t) \in \RR_{>0} \times [\sigma_\kappa,\infty) : (r,t,\kappa)\in\scrB  \right\}$.

\begin{prop}\label{pointprop-multi}
For $\vecsigma\in\RR^K$, $R\in\RR$ and $\scrB\subset\RR_{>0}\times\overline\Sigma^K(\vecsigma)$, 
\begin{multline}\label{pointeq-multi}
\#\left\{ j : \left(\e^{-R} \xi_j(x_0,\vecsigma,R),w_j(x_0,\vecsigma,R),\kappa_j(x_0,\vecsigma,R)\right)  \in\scrB \right\}\\
= \sum_{\kappa=1}^K \# \left\{ \scrP^\kappa_\Gamma g_0 \begin{pmatrix} \e^{-R/2} & 0 \\ 0 & \e^{R/2} \end{pmatrix} \cap \left\{ \e^{-t/2}(1,r) : (r,t)\in\scrB^\kappa \right\} \right\}  .
\end{multline}
\end{prop}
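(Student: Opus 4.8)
The plan is to run, in each cusp $\kappa$ separately, the computation from the proof of Proposition \ref{pointprop}, feeding in Proposition \ref{pointlem-multi} in place of Proposition \ref{pointlem} and the bijection \eqref{isom2-multi} $\Gamma_\kappa\backslash\Gamma g\to\scrP^\kappa_\Gamma g$ in place of \eqref{isom2}. Abbreviate $D_R=\begin{pmatrix}\e^{-R/2}&0\\0&\e^{R/2}\end{pmatrix}$ and, for $\scrB\subset\RR_{>0}\times\overline\Sigma^K(\vecsigma)$, let $\widehat\scrB=\{(r,s,t,\kappa):(r,t,\kappa)\in\scrB,\ s\in[0,1)\}\subset\RR_{>0}\times\Sigma^K(\vecsigma)$ be the cylinder over $\scrB$ in the horizontal variable. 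Because the $s$-component of every impact parameter $w_j$ lies in $[0,1)$ by construction, the condition that $(\e^{-R}\xi_j,w_j,\kappa_j)$ meet $\scrB$ (which does not record $s$) is the same as $(\e^{-R}\xi_j,w_j,\kappa_j)\in\widehat\scrB$; hence the left-hand side of \eqref{pointeq-multi} equals the left-hand side of Proposition \ref{pointlem-multi} applied with test set $\widehat\scrB$.

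Applying Proposition \ref{pointlem-multi} and splitting the $(\gamma,\kappa)$-sum over $\kappa\in[K]$, everything reduces to the single-cusp identity
\begin{equation*}
\#\bigl\{\gamma\in\Gamma: g_\kappa\gamma g_0 D_R\in A(\scrB^\kappa\times[0,1))\bigr\}
=\#\bigl(\scrP^\kappa_\Gamma g_0 D_R\cap\{\e^{-t/2}(1,r):(r,t)\in\scrB^\kappa\}\bigr)
\end{equation*}
(with $A(\scrB^\kappa\times[0,1))$ shorthand for $\{A(r,s,t):(r,t)\in\scrB^\kappa,\ s\in[0,1)\}$), which I would prove exactly as in Proposition \ref{pointprop}. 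If $\gamma,\gamma'$ lie in the same left $\Gamma_\kappa$-coset, say $\gamma'=g_\kappa^{-1}\begin{pmatrix}1&m\\0&1\end{pmatrix}g_\kappa\gamma$, then $g_\kappa\gamma'g_0 D_R=\begin{pmatrix}1&m\\0&1\end{pmatrix}g_\kappa\gamma g_0 D_R$, which in the coordinates \eqref{AAAAA} replaces $s$ by $s+m$ while fixing $(r,t)$ and does not change the bottom row of the matrix — equivalently does not change the associated lattice point $(0,1)g_\kappa\gamma g_0 D_R\in\scrP^\kappa_\Gamma g_0 D_R$. So each $\Gamma_\kappa$-coset meeting the set on the left does so in a unique representative (the one with $s\in[0,1)$), and the left-hand count equals the number of cosets $\Gamma_\kappa\gamma$ with $g_\kappa\gamma g_0 D_R\in A(\scrB^\kappa\times\RR)$. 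Since $(0,1)\begin{pmatrix}1&s\\0&1\end{pmatrix}=(0,1)$, the decomposition $A(r,s,t)=\begin{pmatrix}1&s\\0&1\end{pmatrix}\begin{pmatrix}\e^{t/2}&0\\0&\e^{-t/2}\end{pmatrix}\begin{pmatrix}1&0\\r&1\end{pmatrix}$ gives $(0,1)A(r,s,t)=\e^{-t/2}(1,r)$ independently of $s$, so — via the bijection \eqref{isom2-multi} — this number of cosets is exactly $\#\bigl(\scrP^\kappa_\Gamma g_0 D_R\cap\{\e^{-t/2}(1,r):(r,t)\in\scrB^\kappa\}\bigr)$. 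Summing over $\kappa$ gives \eqref{pointeq-multi}.

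I expect the only delicate point to be the bookkeeping in the ``de-lifting'' step: checking that the passage from $\{\gamma\in\Gamma\ \text{with}\ s\in[0,1)\}$ to cosets in $\Gamma_\kappa\backslash\Gamma$ is a genuine bijection compatible with the intersection conditions. This rests on the $(2,2)$-entry of $g_\kappa\gamma g_0 D_R$ — equivalently the height coordinate $t$, since that entry equals $\e^{-t/2}$ — being constant along each $\Gamma_\kappa$-coset, so that the shift $s\mapsto s+m$ really does move any given representative into the strip $[0,1)$ exactly once. A secondary point is that $A$ parametrises $\SL(2,\RR)$ only up to the null set where the $(2,2)$-entry is $\le 0$; such $\gamma$ contribute to neither side, because the corresponding lattice point then has non-positive second coordinate and so lies outside $\{\e^{-t/2}(1,r):(r,t)\in\scrB^\kappa\}$, so the exceptional set is harmless. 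The remaining manipulations are the same routine matrix computations as in the single-cusp case.
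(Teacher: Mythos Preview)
Your argument is correct and follows exactly the route the paper intends (its proof reads in full ``identical to the proof of Proposition \ref{pointprop}''): lift $\scrB$ to the cylinder $\widehat\scrB$, apply Proposition \ref{pointlem-multi}, split over $\kappa$, and use the bijection \eqref{isom2-multi} together with $(0,1)\begin{pmatrix}1&s\\0&1\end{pmatrix}=(0,1)$ to pass from $\Gamma$ to $\Gamma_\kappa\backslash\Gamma$ to $\scrP^\kappa_\Gamma$. One cosmetic slip: the bottom row of $A(r,s,t)$ is $\e^{-t/2}(r,1)$, not $\e^{-t/2}(1,r)$ --- this is a typo already present in the paper's statement (compare the proof of Corollary \ref{corsat}), so your argument goes through once the coordinates are written consistently.
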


\begin{proof} 
The proof is identical to the proof of Proposition \ref{pointprop}.
\end{proof}

 \begin{cor}\label{corsat-multi}
For $X>0$,
\begin{equation}\label{pointeq002-multi}
\#\{ j : \e^{-R} \xi_j(x_0,\vecsigma,R)\leq X \}
=   \sum_{\kappa=1}^K\# \left( \e^{\sigma_\kappa/2} \scrP^\kappa_\Gamma g_0 \begin{pmatrix} \e^{-R/2} & 0 \\ 0 & \e^{R/2} \end{pmatrix} \cap \Delta_X \right) .
\end{equation}
\end{cor}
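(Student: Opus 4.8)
The plan is to follow exactly the proof of Corollary \ref{corsat}, now summing over all $K$ cusps via the multi-cusp lattice-point identity from Proposition \ref{pointprop-multi}. The starting point is that $\scrH(\vecsigma,R)$ is the disjoint union $\bigcup_{\kappa=1}^K\scrH_\kappa(\sigma_\kappa+R)$, so a hit of the orbit of $x_0$ on $\scrH(\vecsigma,R)$ is exactly a hit on one of the individual sections $\scrH_\kappa(\sigma_\kappa+R)$, and the count of hits with $\e^{-R}\xi_j\leq X$ is the sum over $\kappa$ of the corresponding counts for $\scrH_\kappa(\sigma_\kappa+R)$.

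Concretely, I would apply Proposition \ref{pointprop-multi} with the test set $\scrB=(0,X]\times\overline\Sigma^K(\vecsigma)$, so that $\scrB^\kappa=(0,X]\times[\sigma_\kappa,\infty)$. The right-hand side of \eqref{pointeq-multi} then contains, for each $\kappa$, the set
\begin{equation}
\left\{ \e^{-t/2}(1,r) : 0<r\leq X,\; t\geq\sigma_\kappa \right\}
= \left\{ \e^{-t/2}(r,1) : 0<r\leq X,\; t\geq\sigma_\kappa \right\},
\end{equation}
after swapping the two coordinates in the obvious way (the same bookkeeping as in the proof of Corollary \ref{corsat}, where $(0,1)\begin{pmatrix}\e^{t/2}&0\\0&\e^{-t/2}\end{pmatrix}\begin{pmatrix}1&0\\r&1\end{pmatrix}=\e^{-t/2}(r,1)$). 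Writing $u_2=\e^{-t/2}\in(0,\e^{-\sigma_\kappa/2}]$ and $u_1=r u_2$, this is precisely the rescaled triangle $\{u\in\RR^2 : 0<u_1\leq X u_2,\ 0<u_2\leq\e^{-\sigma_\kappa/2}\}=\e^{-\sigma_\kappa/2}\Delta_X$, recalling $\Delta_X=\{u : 0<u_1\leq Xu_2,\ 0<u_2\leq 1\}$. Intersecting $\scrP^\kappa_\Gamma g_0\begin{pmatrix}\e^{-R/2}&0\\0&\e^{R/2}\end{pmatrix}$ with $\e^{-\sigma_\kappa/2}\Delta_X$ has the same cardinality as intersecting $\e^{\sigma_\kappa/2}\scrP^\kappa_\Gamma g_0\begin{pmatrix}\e^{-R/2}&0\\0&\e^{R/2}\end{pmatrix}$ with $\Delta_X$, since scaling both the point set and the region by $\e^{\sigma_\kappa/2}$ preserves incidences; this yields \eqref{pointeq002-multi}.

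There is essentially no obstacle here: the only points requiring care are (i) checking that $t\geq\sigma_\kappa$ (rather than $t\geq 0$ as in the single-cusp case) translates into the height cutoff $u_2\leq\e^{-\sigma_\kappa/2}$ in the triangle, which is where the factor $\e^{\sigma_\kappa/2}$ in front of $\scrP^\kappa_\Gamma$ comes from, and (ii) that the disjointness of the sections across cusps justifies adding the counts without double-counting — both of which are immediate from the constructions in Section \ref{secMultiple}. The proof is therefore a one-line reduction: \emph{In Proposition \ref{pointprop-multi} take $\scrB=(0,X]\times\overline\Sigma^K(\vecsigma)$ and apply the coordinate identity of Corollary \ref{corsat} in each cusp, absorbing the cutoff $t\geq\sigma_\kappa$ into a dilation of the point set by $\e^{\sigma_\kappa/2}$.}
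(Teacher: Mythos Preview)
Your proposal is correct and follows essentially the same approach as the paper: apply Proposition \ref{pointprop-multi} with $\scrB=(0,X]\times\overline\Sigma^K(\vecsigma)$, compute the resulting planar set for each $\kappa$ as $\{\e^{-t/2}(r,1):0<r\leq X,\ t\geq\sigma_\kappa\}=\e^{-\sigma_\kappa/2}\Delta_X$, and absorb the scale factor into the point set. You spell out the rescaling and disjointness more explicitly than the paper does, but the argument is the same.
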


\begin{proof}
In Proposition \ref{pointprop-multi} take $\scrB=(0,X]\times\overline\Sigma^K(\vecsigma)$. Then $\scrB^\kappa = \RR_{>0} \times [\sigma_\kappa,\infty)$ and we have
\begin{equation}
\left\{ (0,1) \begin{pmatrix} \e^{t/2} & 0 \\ 0 & \e^{-t/2} \end{pmatrix} \begin{pmatrix} 1 & 0 \\ r & 1 \end{pmatrix} : (r,t)\in\scrB^\kappa \right\}
= \left\{ \e^{-t/2} (r,1) : 0<r\leq X,\; t\geq \sigma_\kappa  \right\} .
\end{equation}
\end{proof}

\begin{cor}
For $(r,s,t,\kappa)\in\RR_{>0}\times\overline\Sigma^K(\vecsigma)$, 
\begin{equation}\label{rel50-multi}
\Psi_\vecsigma(r,s,t,\kappa) = \frac{1}{\overline\eta_1(\vecsigma)} \; 
\prod_{\ell=1}^K\one\left( \e^{\sigma_{\ell}/2} \scrP^{\ell}_\Gamma  g_\kappa
\begin{pmatrix} 1 & s \\  0 & 1 \end{pmatrix} \begin{pmatrix} \e^{t/2} & 0 \\ 0 & \e^{-t/2} \end{pmatrix} 
\cap  \Delta_{-r} = \emptyset \right) 
\end{equation}
with
\begin{equation}\label{etaformula-multi}
\overline\eta_1(\vecsigma) = \pi \vol_\HH(\scrF_\Gamma) \bigg( \sum_{\kappa=1}^{K} \e^{-\sigma_\kappa} \bigg)^{-1}.
\end{equation}
\end{cor}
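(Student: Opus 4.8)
The plan is to establish the two formulas \eqref{rel50-multi} and \eqref{etaformula-multi} separately, following the same scheme used in the single-cusp Corollary with equations \eqref{rel50}--\eqref{etaformula}, but now carrying the cusp label $\kappa$ through the lattice-point dictionary of Section \ref{secMultiple}. For \eqref{rel50-multi}, recall that $\Psi_\vecsigma(r,w,\kappa)=\overline\eta_1(\vecsigma)^{-1}\one(r<\eta_{-1}(w,\kappa,\vecsigma))$ from Theorem \ref{cusp-multi}. The first backwards return time $\eta_{-1}(w,\kappa,\vecsigma)$, with $w=(s,t)$, is the first hitting time of the \emph{full} section $\scrH(\vecsigma,0)=\bigcup_{\ell}\scrH_\ell(\sigma_\ell)$ for initial data $\iota_{\vecsigma,0}(w,\kappa)$ flowing backwards. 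So the condition $r<\eta_{-1}(w,\kappa,\vecsigma)$ says precisely that the backward horocyclic segment of length $r$ starting at $\iota_{\vecsigma,0}(w,\kappa)$ does not meet any of the $K$ sections. Applying the backward version of Corollary \ref{corsat-multi} --- with the orbit started at the point on section $\kappa$, which in $\SL(2,\RR)$-coordinates is represented by $g_\kappa^{-1}\begin{pmatrix}1&s\\0&1\end{pmatrix}\begin{pmatrix}\e^{t/2}&0\\0&\e^{-t/2}\end{pmatrix}$ --- translates ``no hit within backward time $r$'' into the statement that for every $\ell\in[K]$ the deformed Euclidean point set $\e^{\sigma_\ell/2}\scrP_\Gamma^\ell g_\kappa\begin{pmatrix}1&s\\0&1\end{pmatrix}\begin{pmatrix}\e^{t/2}&0\\0&\e^{-t/2}\end{pmatrix}$ misses the triangle $\Delta_{-r}$. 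Taking the product of these $K$ indicator conditions gives \eqref{rel50-multi}. The one point requiring care is checking that the $g_\kappa$-conjugation in the definition of $\scrP_\Gamma^\ell$ (via \eqref{isom2-multi}) lines up correctly with the chosen representative $g_\kappa^{-1}(\cdots)$ of the section point, so that the composite group element is $g_\ell\gamma g_\kappa^{-1}(\cdots)$ as in Proposition \ref{pointlem-multi} specialised to the backward/height-only setting of Proposition \ref{pointprop-multi}; this is a bookkeeping exercise of the type already carried out in Section \ref{secMultiple}.

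For \eqref{etaformula-multi}, the plan is the Siegel/Kac computation exactly as in the single-cusp case. By the backward analogue of Corollary \ref{corsat-multi}, $\#\{j:-\e^{-R}\xi_{-j}(x,\vecsigma,R)\le X\}=\sum_{\kappa=1}^K\#\big(\e^{\sigma_\kappa/2}\scrP_\Gamma^\kappa g\,\cap\,\Delta_{-X}\big)$ for $g$ a representative of $x$. Integrating over $x\in\scrX$ with respect to $\mu$, i.e. over $g\in\tilde\scrF_\Gamma$, and using Proposition \ref{propSiegel-multi} with $f=\chi_{\Delta_{-X}}(\e^{\sigma_\kappa/2}\,\cdot\,)$ for each term, one gets
\begin{equation}
\int_\scrX \#\{j:-\e^{-R}\xi_{-j}(x,\vecsigma,0)\le X\}\,d\mu(x)
= \sum_{\kappa=1}^K \frac{2\,\e^{-\sigma_\kappa}\vol_{\RR^2}(\Delta_{-X})}{\pi\vol_\HH(\scrF_\Gamma)}
= \frac{X\sum_{\kappa=1}^K \e^{-\sigma_\kappa}}{\pi\vol_\HH(\scrF_\Gamma)},
\end{equation}
using $\vol_{\RR^2}(\Delta_{-X})=\tfrac12 X$ and the determinant-one scaling (the factor $\e^{\sigma_\kappa/2}$ scales areas by $\e^{\sigma_\kappa}$, hence $\vol_{\RR^2}(\e^{-\sigma_\kappa/2}\Delta_{-X})=\e^{-\sigma_\kappa}\tfrac12 X$). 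Thus the stationary backward hitting point process has intensity $\big(\pi\vol_\HH(\scrF_\Gamma)\big)^{-1}\sum_\kappa\e^{-\sigma_\kappa}$, and since the intensity of a stationary renewal-type point process equals the reciprocal of the mean gap $\overline\eta_1(\vecsigma)$ (Kac's formula, as in \eqref{Kac-multi2}), we obtain \eqref{etaformula-multi}. Alternatively, \eqref{etaformula-multi} drops out directly by integrating \eqref{rel50-multi}: $\int_0^\infty\int_{\overline\Sigma^K(\vecsigma)}\Psi_\vecsigma(r,t,\kappa)\,\nu_\vecsigma(\cdots)\,dr=1$ forces $\overline\eta_1(\vecsigma)=\sum_\kappa\int_{\sigma_\kappa}^\infty\eta_{-1}\text{-type integrals}$, which by the Euclidean picture equals $\pi\vol_\HH(\scrF_\Gamma)\big(\sum_\kappa\e^{-\sigma_\kappa}\big)^{-1}$ after the same area computation.

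The main obstacle I anticipate is purely organizational rather than substantive: making sure the conjugating matrices $g_\kappa$, the scaling factors $\e^{\sigma_\kappa/2}$, and the choice of which fundamental domain one integrates over all appear in consistent places, so that the $K$ different cusp contributions really do combine into the single clean factor $\sum_\kappa\e^{-\sigma_\kappa}$. In particular one must verify that, after the change of variables sending the cusp $\kappa$ to $\infty$, the section $\scrH_\ell(\sigma_\ell)$ for $\ell\ne\kappa$ contributes the point set $\e^{\sigma_\ell/2}\scrP_\Gamma^\ell g_\kappa(\cdots)$ and not, say, $\scrP_\Gamma^\ell$ without the $g_\kappa$, and that the height threshold $t\ge\sigma_\ell$ is what produces the $\e^{\sigma_\ell/2}$-dilation of $\Delta_{-r}$ (equivalently the $\e^{-\sigma_\ell/2}$-contraction, as in the proof of Corollary \ref{corsat-multi}). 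Once this bookkeeping is pinned down, both claimed formulas follow by the verbatim arguments of the single-cusp case, with the new ingredient being only the unimodularity of $\mu$ invoked in Proposition \ref{propSiegel-multi}.
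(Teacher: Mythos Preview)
Your proposal is correct and follows essentially the same route as the paper: for \eqref{rel50-multi} you interpret $r<\eta_{-1}(w,\kappa,\vecsigma)$ as ``no backward hit of the full section'' and invoke the backward version of Corollary~\ref{corsat-multi} with initial data on $\scrH_\kappa(\sigma_\kappa)$, exactly as the paper does; for \eqref{etaformula-multi} you compute the intensity of the hitting process via Proposition~\ref{propSiegel-multi} and Corollary~\ref{corsat-multi} and identify its reciprocal with $\overline\eta_1(\vecsigma)$, again matching the paper. The only cosmetic differences are that you run the intensity computation with the backward times $\xi_{-j}$ and $\Delta_{-X}$ (the paper uses the forward ones, which by symmetry gives the same answer), and that you write the section representative as $g_\kappa^{-1}(\cdots)$ whereas the paper's proof records $g_0=g_\kappa(\cdots)$ to match the statement; your bookkeeping remark about tracking $g_\kappa$ versus $g_\kappa^{-1}$ is well taken but does not alter the argument.
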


\begin{proof}
Relation \eqref{rel50-multi} follows from Corollary \ref{corsat-multi}, since the first return time (to the section $\scrH_\kappa(\vecsigma,0)$) is the first hitting time (to the section $\scrH_\kappa(\vecsigma,0)$) for initial data 
\begin{equation}
g_0 = g_\kappa
\begin{pmatrix} 1 & s \\  0 & 1 \end{pmatrix} \begin{pmatrix} \e^{t/2} & 0 \\ 0 & \e^{-t/2} \end{pmatrix} 
\end{equation}
on the section $\scrH_\kappa(\sigma_\kappa)$, with $s\in[0,1)$, $t\geq \sigma_\kappa$.

Proposition \ref{propSiegel-multi} and Corollary \ref{corsat-multi} imply that
\begin{equation}
\int_\scrX \#\{ j : \xi_j(x,\vecsigma,0)\leq X \}\, d\mu(x) = \sum_{\kappa=1}^{K} \frac{2\vol_{\RR^2}(\e^{-\sigma_\kappa/2}\Delta_X)}{\pi \vol_\HH(\scrF_\Gamma)} 
= \frac{X}{\pi \vol_\HH(\scrF_\Gamma)} \sum_{\kappa=1}^{K} \e^{-\sigma_\kappa},
\end{equation}
which means that the stationary point process $(\xi_j(x,\vecsigma,0))_{j\in\NN}$ (with $x\in\scrX$ random according to $\mu$) has intensity 
$$
\frac{1}{\pi \vol_\HH(\scrF_\Gamma)} \sum_{\kappa=1}^{K} \e^{-\sigma_\kappa}.
$$
This completes the proof.
\end{proof}

We finally turn to the tail estimates of Section \ref{secTail}. Again, we have positive lower bound for the return time,
\begin{equation}
\eta_1^*(\kappa,\vecsigma) = \inf_{w\in\Sigma} \eta_{1}(w,\kappa,\vecsigma) = \inf_{w\in\Sigma} \eta_{- 1}(w,\kappa,\vecsigma) >0 .
\end{equation}
For $0<r<\eta_1^*(\kappa,\vecsigma)$ we therefore have
\begin{equation}\label{Psirw789zz}
\Psi_\vecsigma(r,w,\kappa) = \frac{1}{\overline\eta_1(\vecsigma)}  ,
\end{equation}
and, for $0<r<\min_\kappa \eta_1^*(\kappa,\vecsigma)$,
\begin{equation}\label{Psirw78900zz}
\Psi_\vecsigma(r) =  \frac{1}{\overline\eta_1(\vecsigma)} \sum_{\kappa=1}^K \e^{-\sigma_\kappa}  = \frac{1}{\pi \vol_\HH(\scrF_\Gamma)} .
\end{equation}
The proof of the quadratic tail bound for large $r$ is the same as for \eqref{5pt8}, and we have
\begin{equation}\label{5pt8multi}
\frac{c_1}{r^2+1} \leq \Psi_\vecsigma(r) \leq \frac{c_2}{r^2+1},
\end{equation}
for $r\geq 0$ and constants $c_2>c_1>0$. Thus we recover the quadratic tails observed in \cite{Kumanduri24}.

\section{Extreme value laws}\label{secExtreme}

We start by discussing a variant of Theorem \ref{mainthm} in a setting closer to our previous discussion of hitting times. 

 In each cusp the height of the horocycle excursion is relative to a closed horocycle of unit length.  
Thus with surfaces  having cusps of different widths the definition of the height of horocycle excursion changes accordingly.
Using the standard parametrisation, via \eqref{iota007}, of the section $\scrH(\vecsigma,0)$ by $(s,t,\kappa)\in\Sigma^K(\vecsigma)$, we define the marked height of a point $x\in\scrH(\vecsigma,0)$ by $(\tau(x),\kappa(x))$ as the second (vertical) coordinate $t$ and cusp $\kappa$. Note that the marked height is independent of the choice of $\vecsigma$, as long as $x$ is in the section. We denote the relative height by $\widetilde\tau(x,\vecsigma)=\tau(x)-\sigma_{\kappa(x)}$. We do the same for the projected section
\begin{equation}
\overline\scrH(\vecsigma,0) = \bigcup_{\kappa=1}^K \overline\scrH_\kappa(\sigma_\kappa),
\qquad \overline\scrH_\kappa(\sigma_\kappa) =\pi \scrH_\kappa(\sigma_\kappa),
\end{equation}
assuming here that $\sigma_\kappa$ are sufficiently large so that the map 
$\pi\circ\iota_{\vecsigma,0}: \Sigma^K(\vecsigma) \to \scrY$ defines an embedding with image $\overline\scrH(\vecsigma,0)$, with $\iota_{\vecsigma,0}$ as defined in \eqref{iota007}.

Every $y\in\overline\scrH(\vecsigma,0)$ then has a unique pre-image $\{x\}=\pi^{-1}(y)\cap\scrH(\vecsigma,0)$, and we define the marked height $(\tau(y),\kappa(y))$ and relative height $\widetilde\tau(y,\vecsigma)=\tau(y)-\sigma_{\kappa(y)}$ of such a $y$ as the height and relative height of $x$, respectively.
Geometrically, $\widetilde\tau(y,\vecsigma)$ is the Riemannian distance from the horocycle $\varphi_{\sigma_\kappa}(\scrC_\kappa)$ (the boundary of $\overline\scrH_\kappa(\sigma_\kappa)$) to $y\in\overline\scrH_\kappa(\sigma_\kappa)$.

The following limit law provides the distribution of extremal heights of cusp excursions of the horocyclic orbit $\{ h_s^+(x_0) : 0 \leq s \leq T\}$ with random initial data $x_0\in\scrX$. We will consider both extreme excursions on the surface and extreme locally maximal excursions, which occur at time $\xi_j=\xi_j(x_0,\vecsigma,0)$ when the trajectory hits the section $\scrH(\vecsigma,0)$. 
As we will see below, Theorem \ref{mainthm} is a consequence of the following statement.
  
\begin{thm}\label{cusp-first-thm-multi}
Assume $\lambda$ is admissible.  Then, for $H\in\RR$, $\vecsigma\in\RR^K$, 
\begin{equation}\label{h-limit}
\lim_{T \to \infty} \lambda\left\{ x_0\in\scrX :  \sup_{0\leq s \leq T} \widetilde\tau(\pi\circ  h_s^+(x_0),\vecsigma)  >  H + \log T \right\} 
=  \int_H^\infty \rho_\vecsigma(s) ds 
\end{equation}
and
\begin{equation}\label{h-limit2}
\lim_{T \to \infty} \lambda\left\{ x_0\in\scrX :  \sup_{0\leq \xi_j \leq T} \widetilde\tau(h_{\xi_j}^+(x_0),\vecsigma)  >  H + \log T \right\} 
=  \int_H^\infty \rho_\vecsigma(s) ds 
\end{equation}
with the $\L^1$ probability density
\begin{equation}\label{cusp-first-cor-den}
\rho_\vecsigma(s) = \e^{-s} \Psi_\vecsigma(\e^{-s}) .
\end{equation}
\end{thm}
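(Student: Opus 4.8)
The plan is to deduce both \eqref{h-limit} and \eqref{h-limit2} from the first--hitting--time asymptotics of Corollary~\ref{cusp-first-cor0-multi}, applied to the shifted section $\scrH(\vecsigma,R)$ with a threshold $R$ that is allowed to grow with $T$.

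\emph{Step 1: rewriting the event.} By construction the projected cuspidal neighbourhood $\overline\scrH(\vecsigma,R)=\bigcup_\kappa\overline\scrH_\kappa(\sigma_\kappa+R)$ is exactly the locus $\{\,y\in\scrY:\widetilde\tau(y,\vecsigma)\geq R\,\}$. Hence, if $\xi^\pi_1(x_0,\vecsigma,R)$ denotes the first entry time of the horocyclic orbit into $\overline\scrH(\vecsigma,R)$, then for every $T>0$ the events $\{\xi^\pi_1(x_0,\vecsigma,R)\leq T\}$ and $\{\sup_{0\leq s\leq T}\widetilde\tau(\pi\circ h^+_s(x_0),\vecsigma)\geq R\}$ differ only on a subset of $\{x\in\scrX:\widetilde\tau(\pi x,\vecsigma)\geq R\}$, namely those $x_0$ that already lie deep inside a cusp at time $0$; since these sets decrease to $\varnothing$ as $R\to\infty$, their $\lambda$--measure tends to $0$ for any probability measure $\lambda$. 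Replacing $\geq R$ by the strict inequality $>R$ of the statement alters the event only on the set where some cusp excursion of $\{h^+_s(x_0):0<s\leq T\}$ attains its apex at relative height exactly $R$; by Proposition~\ref{lem-zero} (together with admissibility of $\lambda$) the heights of the hits to $\scrH(\vecsigma,0)$ carry no atoms, so this set is $\lambda$--null. Entirely analogously, the hits of the orbit to $\scrH(\vecsigma,R)$ are precisely the hits to $\scrH(\vecsigma,0)$ whose relative height is $\geq R$ --- at an apex the orbit points towards the cusp, so a hit of the section records a locally maximal excursion --- and they occur at the same times; hence, up to the same $\lambda$--null apex set, $\{\xi_1(x_0,\vecsigma,R)\leq T\}$ coincides with $\{\sup_{0\leq\xi_j\leq T}\widetilde\tau(h^+_{\xi_j}(x_0),\vecsigma)>R\}$, where $\xi_j=\xi_j(x_0,\vecsigma,0)$ (no correction for an initial point inside the neighbourhood is needed here, since $x_0$ can only be an apex if it lies on the section).

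\emph{Step 2: taking the limit.} Put $R=H+\log T$, so that $\e^{-R}T=\e^{-H}$ and $R\to\infty$ as $T\to\infty$. Dividing hitting times by $\e^{R}$ turns the events of Step~1 into $\{\e^{-R}\xi^\pi_1(x_0,\vecsigma,R)\leq\e^{-H}\}$ and $\{\e^{-R}\xi_1(x_0,\vecsigma,R)\leq\e^{-H}\}$. Relations \eqref{psi2-multi} and \eqref{psi1-multi} of Corollary~\ref{cusp-first-cor0-multi}, applied with $X=\e^{-H}$, together with $\int_0^\infty\Psi_\vecsigma(r)\,dr=1$, show that the $\lambda$--measure of each converges to $1-\int_{\e^{-H}}^\infty\Psi_\vecsigma(r)\,dr=\int_0^{\e^{-H}}\Psi_\vecsigma(r)\,dr$. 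The substitution $r=\e^{-s}$ identifies this with $\int_H^\infty\e^{-s}\Psi_\vecsigma(\e^{-s})\,ds=\int_H^\infty\rho_\vecsigma(s)\,ds$ by \eqref{cusp-first-cor-den}, which together with Step~1 proves \eqref{h-limit} and \eqref{h-limit2}; the same substitution over all of $\RR$ gives $\int_\RR\rho_\vecsigma=\int_0^\infty\Psi_\vecsigma=1$, so $\rho_\vecsigma\in\L^1(\RR)$ is a probability density.

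\emph{Expected main difficulty.} All the substance lies in Step~1: the bookkeeping that matches the supremum of the relative height along an orbit segment with a first--hitting (resp.\ first--entry) time for the shifted section, and the control of the two boundary terms --- the strict/non-strict ambiguity at the threshold, handled by the absence of atoms via Proposition~\ref{lem-zero}, and the contribution of initial conditions starting inside the (shrinking) cuspidal neighbourhood, negligible because these neighbourhoods decrease to the empty set. Steps~2 and the change of variables are then purely formal.
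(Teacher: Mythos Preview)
Your proposal is correct and follows essentially the same route as the paper: both identify the extreme-value event with the complement of a first-hitting-time (resp.\ first-entry-time) event for the shifted section $\scrH(\vecsigma,R)$ with $R=H+\log T$, and then invoke \eqref{psi1-multi}--\eqref{psi2-multi} of Corollary~\ref{cusp-first-cor0-multi} with $X=\e^{-H}$, followed by the change of variables $r=\e^{-s}$. The paper simply asserts the equivalence $\{\sup_{0\le s\le T}\widetilde\tau\le R\}=\{\xi_1^\pi>T\}$ without comment, whereas you take extra care with the two null-set discrepancies (strict versus non-strict threshold, and initial data already inside the cuspidal neighbourhood); this extra bookkeeping is harmless and arguably makes the argument cleaner.
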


\begin{proof}
The critical observation is that
\begin{equation}
\sup_{0\leq s \leq T} \widetilde\tau(\pi\circ  h_s^+(x_0),\vecsigma)  \leq  R
\end{equation}
if and only if 
\begin{equation}
\xi^\pi_1( x_0,\vecsigma,R) >T .
\end{equation}
 Hence Eq.~\eqref{psi2-multi} of Corollary \ref{cusp-first-cor0-multi} yields, for $T=\e^R X$ and $X=\e^{-H}$, 
\begin{equation}
\lim_{T \to \infty} \lambda\left\{ x_0\in\scrX :  \sup_{0\leq s \leq T} \widetilde\tau(\pi\circ  h_s^+(x_0),\vecsigma)  \leq  H + \log T \right\} 
=  \int_{\e^{-H}}^\infty \Psi_\vecsigma(s) ds 
\end{equation}
i.e.,
\begin{equation}
\lim_{T \to \infty} \lambda\left\{ x_0\in\scrX :  \sup_{0\leq s \leq T} \widetilde\tau(\pi\circ  h_s^+(x_0),\vecsigma)  >  H + \log T \right\} 
=  1-\int_{\e^{-H}}^\infty \Psi_\vecsigma(s) ds 
\end{equation}
and the formula for the density follows by differentiation. The second claim \eqref{h-limit2} follows by the same argument, using instead Eq.~\eqref{psi1-multi}.
\end{proof}

Note that the tail bounds \eqref{Psirw789zz} and \eqref{5pt8multi} imply that
\begin{equation}\label{5pt8multirho00}
\rho_\vecsigma(s) = \frac{\e^{-s}}{\pi \vol_\HH(\scrF_\Gamma)} 
\end{equation}
for sufficiently large $s>0$, and 
\begin{equation}\label{5pt8multirho}
c_1\e^{-|s|} \leq \rho_\vecsigma(s) \leq c_2 \e^{-|s|}
\end{equation}
for $s\leq 0$.

Relation \eqref{h-limit} generalises the extreme value law for the horocycle flow on the modular surface by Kirsebom and Mallahi-Karai \cite{Kirsebom}. The modular surface has only one cusp, and we may take $\sigma=0$. 
In view of \eqref{Hall777}, the limit density $\rho_0(s) =\e^{-s} \Psi(\e^{-s})= \rho(s)$ in Theorem \ref{cusp-first-thm-multi} is
\begin{equation}\label{rho-ex}
\rho(s) = \frac{3}{\pi^2} \times
\begin{cases}
-\e^{-s}+ 2+2\e^{-s}\sqrt{\frac14-\e^s}-4\log\left(\frac12+\sqrt{\frac14-\e^s}\right) & \text{if $s\in(-\infty,-2\log 2]$} \\
-\e^{-s}+2-2 s  & \text{if $s\in[-2\log 2,0]$}\\
\e^{-s} & \text{if $s\in[0,\infty)$.}
\end{cases}
\end{equation}
and for $s\to-\infty$,
\begin{equation}
\rho(s) \sim\frac{6}{\pi^2} \e^{-|s|}  .
\end{equation}
Note the similarity with the distribution of the logs of smallest denominators \cite{smalld2}, whose density is $\eta_{\log}(s)=2\rho(-2s)$. This of course is not accidental. Indeed, smallest denominators correspond to locally maximal cusp excursions of closed horocycles, cf.~\cite{smalld1}. In particular, \cite{smalld2} gives the following formula for the expected value
\begin{equation}
\int_\RR s\, \rho(s) \,ds = - 2 \int_\RR s\, \eta_{\log}(s) \,ds = 1-\frac{12}{\pi^2}\, \zeta (3) \approx -0.46153 .
\end{equation}
Similar explicit formulas hold for higher moments. 

We note that \cite{Kirsebom} uses as a distance measure the log of the alpha function $\alpha_1$, which represents the inverse length of the shortest non-zero lattice vector in the lattice $\ZZ^2 g$,  
\begin{equation}
\alpha_1(x) = \max_{v \in\ZZ^2 g\setminus\{0\}} \frac{1}{\| v\|} ,
\end{equation}
with $g$ such that $x=\SL(2,\ZZ) g$. Here $\| \,\cdot\,\|$ is the Euclidean norm, and thus $\alpha_1$ is invariant under the right action of $\SO(2)$, hence may be viewed as a function on the modular surface. A short argument shows that for $y\in\overline\scrH_1(0)$ we have $2\log \alpha_1(y) = \widetilde\tau(y)$. The limit density in \cite{Kirsebom} is related to ours by $\rho_{\text{KMK}}(r)=2\rho(2r)$.

\begin{figure}
\begin{center}
    \begin{tikzpicture}[thick,scale=0.70, every node/.style={scale=1}]
          \draw[fill,yellow] (5,-2)--(2,10)--(5,10);
    \draw[blue] (0,0) -- (8,0);
    \draw[black,dashed] (0,0) -- (8,0);
       \draw[black,dashed] (0,0) -- (0,12);
          \draw[black,dashed] (8,0) -- (8,12);
          \draw[blue] (0,10) -- (2,10);
                  \draw[red] (5,10) -- (2,10);
                  \draw[blue,dashed] (5,10) -- (2,10);
                    \draw[blue] (5,10) -- (8,10);
          \draw[green] (2,0) -- (2,10);
         \draw[black] (2,10) -- (5,-2);
          \draw[red] (5,10) -- (5,-2);
           \draw[red,dashed] (5.1,0) -- (5.1,-2);
\filldraw[black] (2,10) circle (3pt);
\node at (2,10.5) {$y'$};
\filldraw[black] (5,-2) circle (3pt);
\node at (5,-2.5) {$y$}; 
\node at (5.6,-1.15) {\red $\ell_y$}; 
\node at (9.5,11.5) {$\overline\scrH_\kappa(\sigma_\kappa)$};
\node at (8.8,10.5) {\rm\blu horocycle of length $e^{-\tau(y')}$};
\node at (8.7,-0.5) {\rm\blu horocycle of length $e^{-\sigma_\kappa}$};
\node at (3.7,4.8) {\rotatebox{284}{geodesic of length $d(y,y')$}}; 
\node at (5.5,5) {\rotatebox{270}{\red geodesic of length $\ell_y+\tilde \tau(y',\sigma)$}}; 
\node at (1,6) {\gree $\tilde \tau(y',\sigma)$}; 
    \end{tikzpicture}
\end{center}
\caption{Schematic illustration for the triangle inequality leading to \eqref{tri}. The relevant triangle is highlighted in yellow.} \label{fig123}
\end{figure}
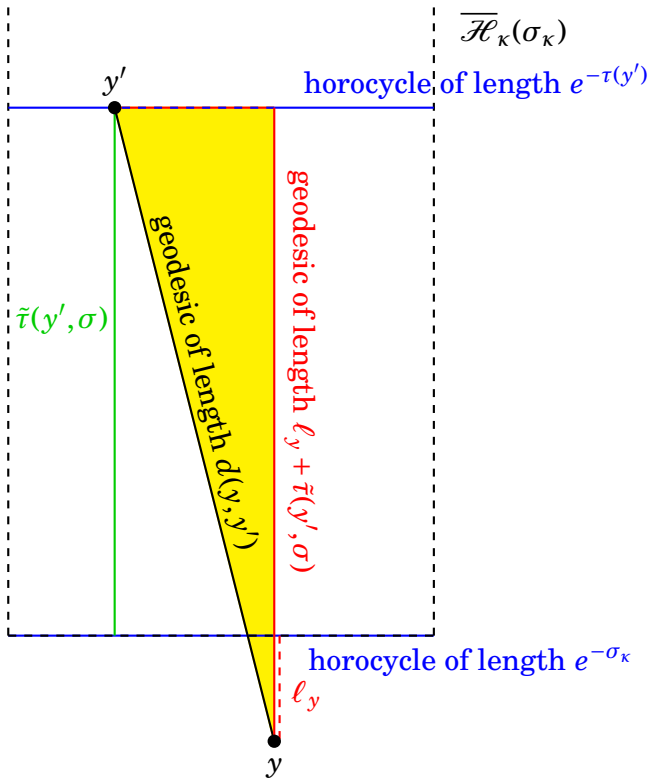

Theorem \ref{mainthm} is now a corollary of Theorem \ref{cusp-first-thm-multi}, Eq.~\eqref{h-limit}.

\begin{proof}[Proof of Theorem \ref{mainthm}]
Choose $\vecsigma\in\RR^K$ so that (i) its coefficients $\sigma_\kappa$ are sufficiently large as assumed above and (ii) the point $y\in\scrY\setminus\overline\scrH_\kappa^\circ(\sigma_\kappa)$ is equidistant to all cuspidal neighbourhoods $\overline\scrH_\kappa(\sigma_\kappa)$. That is, there is a $\ell_y\geq 0$ such that, for all $\kappa\in[K]$, we have
\begin{equation}
\ell_y = d_\scrY(y, \overline\scrH_\kappa(\sigma_\kappa)) .
\end{equation}
The triangle inequality (cf. Figure \ref{fig123}) then shows that for $y'\in \overline\scrH_\kappa(\sigma_\kappa)$ we have
\begin{equation}\label{tri}
\left| \widetilde\tau(y',\vecsigma) + \ell_y - d_\scrY(y,y') \right| \leq \e^{-\tau(y')}.
\end{equation}
Here the upper bound follows from the observation that the point $y'$ is at height $\tau(y')$ and therefore contained in a closed horocycle of length $\e^{-\tau(y')}$; that length in turn is greater than the geodesic distance between the end points of the corresponding horocycle segment. Since $\tau(y')\gg \log T \to\infty$ and by the continuity of the limit in $H$, Eq.~\eqref{h-limit} then implies that 
\begin{equation}\label{maineq234}
\lim_{T \to \infty} \lambda\left\{ x_0\in\scrX :  \sup_{0<s\leq T}  d_\scrY\left(y, \pi\circ h^+_s(x_0) \right) -\ell_y  >  H + \log T  \right\} 
=  \int_H^\infty \rho_\vecsigma(s) ds  
\end{equation}
and hence
\begin{equation}\label{maineq23456}
\lim_{T \to \infty} \lambda\left\{ x_0\in\scrX :  \sup_{0<s\leq T}  d_\scrY\left(y, \pi\circ h^+_s(x_0) \right)   >  H + \log T  \right\} 
=  \int_{H}^\infty \rho_\vecsigma(s-\ell_y) ds  .
\end{equation}
This completes the proof of Theorem \ref{mainthm} with the limit density given by $\omega_y = \rho_\vecsigma(s-\ell_y)$; the tail estimate follows from \eqref{5pt8multirho00} and \eqref{5pt8multirho}.
\end{proof}

We finally note that, in the case of the modular surface $\scrY$, we have $\omega_y(s)=\rho(s)$ if $y$ is a point on the (unique) closed horocycle of length one in $\scrY$.

\end{document}